\documentclass[11pt,a4paper]{amsart}
\RequirePackage{pdf14}
\RequirePackage{fix-cm}

\usepackage[left=2.5cm, right=2.5cm,top=3.5cm]{geometry} 
\pdfsuppresswarningpagegroup=1
\usepackage[utf8]{inputenc}
\usepackage{graphicx}
\usepackage{relsize}
\usepackage{mathrsfs,amsmath,amscd,amsthm,amssymb,verbatim,mathtools}
\usepackage{xcolor}
\usepackage{todonotes}
\usepackage{bm,bbm}
\usepackage{parskip}
\usepackage[shortcuts]{extdash}
\setlength{\parskip}{0pt}
\setcounter{secnumdepth}{3}
\setcounter{tocdepth}{1}
\numberwithin{equation}{section}
\numberwithin{figure}{section}
\numberwithin{table}{section}




\usepackage{amsfonts}


\definecolor{e-mail}{rgb}{0,.40,.80}
\definecolor{reference}{rgb}{.20,.60,.22}
\definecolor{citation}{rgb}{0,.40,.80}

\usepackage[colorlinks=true,
linkcolor=reference,
citecolor=citation,
urlcolor=e-mail]{hyperref}
\usepackage[capitalise]{cleveref}


\usepackage[all,cmtip]{xy}

\usepackage{tikz,tikz-cd}
\usetikzlibrary{arrows,shapes}
\usetikzlibrary{trees}
\usetikzlibrary{matrix,arrows}
\usetikzlibrary{positioning}
\usetikzlibrary{calc,through}
\usetikzlibrary{decorations.pathreplacing}
\usepackage{pgffor}

\usetikzlibrary{decorations.pathmorphing}
\usetikzlibrary{decorations.markings}
\tikzset{
	vector/.style={decorate, decoration={snake}, draw},
	provector/.style={decorate, decoration={snake,amplitude=2.5pt}, draw},
	antivector/.style={decorate, decoration={snake,amplitude=-2.5pt}, draw},
	fermion/.style={draw=black, postaction={decorate},
		decoration={markings,mark=at position .55 with {\arrow[draw=black]{>}}}},
	fermionbar/.style={draw=black, postaction={decorate},
		decoration={markings,mark=at position .55 with {\arrow[draw=black]{<}}}},
	fermionnoarrow/.style={draw=black},
	gluon/.style={decorate, draw=black,
		decoration={coil,amplitude=4pt, segment length=5pt}},
	scalar/.style={dashed,draw=black, postaction={decorate},
		decoration={markings,mark=at position .55 with {\arrow[draw=black]{>}}}},
	scalarbar/.style={dashed,draw=black, postaction={decorate},
		dwecoration={markings,mark=at position .55 with {\arrow[draw=black]{<}}}},
	scalarnoarrow/.style={dashed,draw=black},
	electron/.style={draw=black, postaction={decorate}/,
		decoration={markings,mark=at position .55 with {\arrow[draw=black]{>}}}},
	bigvector/.style={decorate, decoration={snake,amplitude=4pt}, draw},
}


\newtheorem{thm}{Theorem}[section]
\newtheorem{prop}[thm]{Proposition}
\newtheorem{lem}[thm]{Lemma}
\newtheorem{cor}[thm]{Corollary}

\theoremstyle{definition}
\newtheorem{dfn}[thm]{Definition}
\newtheorem{dfn/lem}{Definition/Lemma}

\theoremstyle{remark}
\newtheorem{rmk}[thm]{Remark}
\newtheorem{physrmk}[thm]{Physical Remark}

\newcommand{\defterm}[1]{\textbf{\emph{#1}}}


\def\be{\begin{equation}}
	\def\ee{\end{equation}}



\DeclareMathOperator{\Tr}{Tr}

\newcommand{\ii}{\text{i}}

\newcommand{\id}{\text{id}}
\newcommand{\pd}{\partial}
\newcommand{\opd}{\overline{\partial}}
\newcommand*\diff{\mathop{}\!\mathrm{d}}


\newcommand{\wt}{\widetilde}
\newcommand{\wh}{\widehat}



\newcommand{\fsl}{\mathfrak{sl}}

\newcommand{\fg}{\mathfrak{g}}



\newcommand{\R}{{\mathbb R}}
\newcommand{\N}{{\mathbb N}}
\renewcommand{\P}{{\mathbb P}}

\newcommand{\C}{{\mathbb C}}
\newcommand{\Z}{{\mathbb Z}}


\newcommand{\CF}{{\mathcal F}}

\newcommand{\CN}{{\mathcal N}}
\newcommand{\CO}{{\mathcal O}}

\newcommand{\CR}{{\mathcal R}}
\newcommand{\CS}{{\mathcal S}}

\newcommand{\CV}{{\mathcal V}}



\newcommand{\bQ}{\mathbf{Q}}

\begin{document}


\title[Semi-infinite cohomology of graded-unitary vertex algebras]{On the semi-infinite cohomology of graded-unitary vertex algebras}

\author[C. Beem]{Christopher Beem}
\author[N. Garner]{Niklas Garner}

\address{Mathematical Institute, University of Oxford, Woodstock Road, Oxford, OX2 6GG, United Kingdom}

\email{christopher.beem@maths.ox.ac.uk, niklas.garner@maths.ox.ac.uk}

\begin{abstract}
    Recently, the first author with A. Ardehali, M. Lemos, and L. Rastelli introduced the notion of \emph{graded unitarity} for vertex algebras. This generalization of unitarity is motivated by the SCFT/VOA correspondence and introduces a novel Hilbert space structure on the state space of a large class of vertex algebras that are not unitary in the conventional sense. In this paper, we study the relative semi-infinite cohomology of graded-unitary vertex algebras that admit a chiral quantum moment map for an affine current algebra at twice the critical level. We show that the relative semi-infinite chain complex for such a graded-unitary vertex algebra has a structure analogous to that of differential forms on a compact K\"ahler manifold, generalizing a strong form of the classic construction of Banks--Peskin and Frenkel--Garland--Zuckerman. We deduce that the relative semi-infinite cohomology is itself graded-unitary, which establishes graded unitarity for a large class of vertex operator algebras arising from three- and four-dimensional supersymmetric quantum field theories. We further establish an outer ${\rm USp}(2)$ action on the semi-infinite cohomology (which does not respect cohomological grading), analogous to the Lefschetz $\mathfrak{sl}(2)$ in K\"ahler geometry. We also show that the semi-infinite chain complex is quasi-isomorphic as a differential graded vertex algebra to its cohomology, in analogy to the formality result of Deligne--Griffiths--Morgan--Sullivan for the de Rham cohomology of compact K\"ahler manifolds. We conclude by observing consequences of these results to the associated Poisson vertex algebras and related finite-type derived Poisson reductions.
\end{abstract}

\maketitle
\tableofcontents
    \setlength{\parskip}{10pt}
    %

\section{\label{sec:intro}Introduction}

Vertex algebras arise in a variety of physical systems that possess observables whose correlations depend holomorphically or meromorphically on their positions on a plane (or Riemann surface) embedded in spacetime. Depending on the precise physical setting in which they originate, they may be equipped with additional structures. For example, when there is a conserved current implementing translations, the vertex algebra possesses a stress tensor/conformal vector, leading to a vertex operator algebra (VOA), or a compatible (shifted) vertex Lie structure if there are additional topological directions transverse to the vertex algebra plane leading to a (shifted) Poisson vertex algebra.

The vertex algebras of principle interest in the present work arise in the context of the four-dimensional $\CN = 2$ superconformal field theories (SCFTs) by means of the construction of \cite{Beem:2013sza}, where the vertex algebra is realized by a collection of local observables called (twisted-translated) \emph{Schur operators} \cite{Gadde:2011uv}, restricted to a preferred plane in the four-dimensional spacetime. Meromorphicity follows from a preserved superconformal symmetry.%
\footnote{These vertex algebras generally have both Grassmann even and odd elements and are more properly referred to as vertex superalgebras. We will omit the prefix ``super'' throughout this work, but it should be understood that in general our vertex algebras may have Grassmann odd parts.} %
These vertex algebras inherit several pieces of additional structure from their four-dimensional parents. For example, they have a canonical stress tensor/conformal vector which, as a consequence of four-dimensional unitarity, necessarily has a negative Virasoro central charge. Additionally, the vector space of Schur operators is triply-graded:
\[
    \CV = \bigoplus_{\substack{h \in \scriptstyle{\frac{1}{2}}\N\\ R \in \scriptstyle{\frac{1}{2}}\N\\ d \in \Z}} \CV_{h,R,d} 
\]
where $h$ is the conformal weight grading, corresponding physically to a sum of four-dimensional spin and ${\rm SU}(2)_R$ weights $h = j_1 + j_2 + R$, and $d$ is an additional grading that is cohomological in nature, corresponding physically to twice the ${\rm U}(1)_r$ charge $d = 2r$. All the conformal weight spaces are finite-dimensional, and the only Schur operator with $h = 0$ is the identity operator, so these are VOAs \emph{of CFT type}%
\footnote{In fact, they are of \emph{strong} CFT type in the sense of \cite{DongMason2004}, see also \cite{Moriwaki2020689}.} %
equipped with an additional cohomological $\Z$ grading that is unrelated to conformal weight, which we will call the \emph{internal} $\mathbb{Z}$ grading. On the other hand, the grading by $R$, corresponding to weights with respect to the ${\rm SU}(2)_R$ symmetry in four dimensions, does not lead to a vertex-algebraic grading and instead leads to a filtration \cite{Beem:2017ooy} that is good in the sense of Li \cite{Li2004}.

The collection of such VOAs is remarkably rich, but does not share many of the features familiar to the study of two-dimensional (chiral) conformal field theories. For example, they are never unitary, as witnessed by, \emph{e.g.}, their necessarily negative Virasoro central charges, and they are generally logarithmic. However, while not unitary in the ordinary sense, recent work \cite{ArabiArdehali:2025fad} of the first author with A. Ardehali, M. Lemos, and L. Rastelli established that the unitarity of the parent four-dimensional SCFT is inherited by the associated VOA in a more subtle form. This structure was dubbed \emph{graded unitarity} in \emph{loc. cit.} In brief, the vector space of local operators in the parent four-dimensional theory is equipped with a natural Hermitian form, coming from suitable two-point functions%
\footnote{In \cite{ArabiArdehali:2025fad}, two-point functions were considered in $\R^4$ with operators placed at Euclidean times $t_E = \pm \frac12$, and with conjugation determined in a planar quantization scheme. One may equivalently consider two-point functions on $S^4$ with operators placed at the poles using the conjugation operation of radial quantization, which is more natural from the perspective of the state-operator correspondence. These two conventions are related by a global conformal transformation and are equivalent.}, %
and reflection positivity of the underlying SCFT guarantees that it is positive-definite. This inner product assigns a norm to a Schur operator $\CO$ by pairing it with its conjugate $\CO^\dagger$, but the latter is \emph{not} a Schur operator and therefore this inner product is not transparently visible in the vertex algebra. Nonetheless, $\CO^\dagger$ is in the same ${\rm SU}(2)_R$ multiplet as another Schur operator $\rho(\CO)$; the graded-unitary structure for $\CV$ is defined in such a way that the resulting inner product assigns a positive norm to $\CO$ by pairing it with $\rho(\CO)$ and accounting for additional kinematical phases.

The existence of a graded-unitary structure on a vertex algebra imposes many interesting consistency conditions; the work \cite{ArabiArdehali:2025fad} studied some of these constraints in the cases where the underlying vertex algebra is (the simple quotient of) a Virasoro vertex algebra $M(p,q)$ or an affine current algebra $L_k(\fsl(N))$ for small $N$, albeit without confirming the existence of the full graded-unitary structure for these examples. Nevertheless, by making certain physically-motivated assumptions on the form of the $\mathfrak{R}$-filtration%
\footnote{Concretely, that work assumes that the physically-relevant filtrations of the Virasoro vertex algebras are weight-based filtrations coming from assigning the conformal vector weight one. The filtration assumed for $L_k(\fsl(2))$ is similarly a weight-based filtration where the generating currents and Sugawara conformal vector are all given weight one; the proposal for the higher-rank analogues of this filtration give the currents weight one and ``Casimir'' vectors a weight of one less than their degree.}: %
the only allowed $M(p,q)$ are of the form $M(2,2n+3)$, and the only allowed levels $k$ for $L_k(\fsl(2))$ and $L_k(\fsl(3))$ are precisely the boundary admissible levels. The $M(2,2n+3)$ algebras are realized by the $(A_1, A_{2n})$ Argyres-Douglas theories in four dimensions, and the $L_{-2+\scriptstyle{\frac{2}{q}}}(\fsl(2))$ are realized by the $(A_1, D_q)$ theories \cite{Cordova:2015nma}, so it expected that these examples to admit graded-unitary structures. For other simple $\fg$, the results of \cite{ArabiArdehali:2025fad} indicate that there could be additional levels $k$ beyond the boundary admissible levels that are compatible with a graded-unitary structure on $L_k(\fg)$, but a more detailed analysis was left for future work.

We describe the mathematical notion of a graded-unitary vertex algebra in Section \ref{sec:unitary}---slightly refining the definition of \cite{ArabiArdehali:2025fad}---as well as a slightly weakened version thereof. We prove several elementary properties of weak graded-unitary vertex algebras, such as a shortening condition on the $\mathfrak{R}$-filtration in Proposition \ref{prop:shortening} and Graded-Unitary Spin-Statistics in Proposition \ref{prop:spinstatistics}, and establish graded unitarity for the simple examples given in Section 2.4 of \cite{ArabiArdehali:2025fad}. While graded unitarity has its origins in four-dimensional physics, our weakened version thereof applies to a much wider range of vertex algebras and includes genuinely unitary vertex algebras, as described in \emph{e.g.} \cite{DongLin2014, AiLin2017, Kac:2020ytv, RaymondTanimotoTener2022}, see Section \ref{sec:ex0}, as well as products of unitary and graded-unitary vertex algebras that arise in other physical and mathematical contexts.

The main purpose of the present work is to establish several structural properties concerning the relative semi-infinite cohomology of weak graded-unitary vertex algebras. Our primary interest is in the relative semi-infinite cohomologies that arise from four-dimensional superconformal gauge theories, but our results apply more generally, including, \emph{e.g.}, the vertex algebras associated to hypertoric varieties studied by Kuwabara \cite{Kuwabara2021} and many of those appearing on the boundary of $A$-twisted three-dimensional $\CN=4$ gauge theories \cite{Costello:2018fnz}.
We begin Section \ref{sec:BRST} with a brief review of relative semi-infinite $\wh{\fg}_{-2h^\vee}$-cohomology with coefficients in a vertex algebra $\mathcal{V}_{\rm matter}$ with a Hamiltonian $\wh{\fg}_{-2h^\vee}$ action, \emph{i.e.} admitting a $\widehat{\mathfrak{g}}$ chiral quantum moment map at twice the critical level. We then go on to describe properties that emerge upon imposing (weak) graded unitarity. The vertex algebra $\CV$ underlying the relative semi-infinite cochain complex with coefficients in $\CV_{\rm matter}$ is formed by taking the $G$-invariant subalgebra of a product of $\CV_{\rm matter}$ and a symplectic fermion VOA ${\rm Sf}[\C^2 \otimes \fg]$ (see \emph{e.g.} Section \ref{sec:ex2}):
\[
    \CV \coloneqq C(\wh{\fg}_{-2h^\vee}, \fg, \CV_{\rm matter}) \cong \left( \CV_{\rm matter} \otimes {\rm Sf}[\C^2 \otimes \fg] \right)^G~.
\]
Our first result establishes that, under some mild but physically-natural conditions on the $\wh{\fg}_{-2h^\vee}$ action that we call ``good'' (see Def. \ref{dfn:goodaction}), the vertex algebra $\CV$ enjoys a rich algebraic structure analogous to that of differential forms on a compact K\"ahler manifold. 

\textbf{Theorem} (Theorem \ref{thm:BRSTKahlerpackage})\textbf{.}
    \textit{Let $\CV_{\rm matter}$ be a weak graded-unitary vertex algebra equipped with a good Hamiltonian $\wh{\fg}_{-2h^\vee}$ action. Then $\CV$ is also a weak graded-unitary vertex algebra and is equipped with a pair of commuting differentials $\bQ^\pm$ satisfying
    \[
    	[\bQ^\alpha, \bQ^\beta] = 0~, \qquad [\overline{\bQ}_\alpha, \overline{\bQ}_\beta] = 0~, \qquad [\bQ^\alpha, \overline{\bQ}_\beta] = \delta^\alpha_\beta \Delta~,
    \]
    where $\overline{\bQ}_\pm \coloneqq (\bQ^\pm)^\dagger$ is the adjoint of $\bQ^\pm$ and $\Delta$ the corresponding ``Laplacian'', as well as a natural $\fsl(2)$ triple $\{\Pi, L, \Lambda\}$ with $\Pi^\dagger = \Pi$ and $L^\dagger = \Lambda$ such that $\bQ^\alpha$ transform in the standard representation thereof and $\overline{\bQ}_\alpha$ in its dual. Moreover, $\CV$ is equipped with a pair of compatible Hodge decompositions
    \[
    	\CV = \ker \Delta \oplus {\rm im} \bQ^+ \oplus {\rm im} \overline{\bQ}_+ = \ker \Delta \oplus {\rm im} \bQ^- \oplus {\rm im} \overline{\bQ}_-~.
    \]
    }

The fact that $\CV$ admits a pair of commuting differentials rotated by an $\fsl(2)$ automorphism is not a new observation, see \emph{e.g.} Section 3.4.1 of \cite{Beem:2013sza}. When combined with (weak) graded unitarity, however, we see that $\CV$ bears a striking resemblance to the de Rham complex of a compact K\"{a}hler manifold.%
\footnote{It is worth noting that K\"{a}hler-like structures on relative semi-infinite cohomology also appear in work of Frenkel, Garland, and Zuckerman \cite{FrenkelGarlandZuckerman1986} in the context of genuinely unitary actions, inspired by results of Banks and Peskin \cite{BanksPeskin1986}; we will see that our results generalize them in the case of affine Lie algebras.} %

We turn to consequences of this theorem in Section \ref{sec:consequences}. The first result establishes that not only does the underlying chain complex admit a (weak) graded-unitary structure, so too does the cohomology.

\noindent \textbf{Theorem} (Theorem \ref{thm:gradedunitaryBRST})\textbf{.}
    \textit{Relative semi-infinite $\wh{\fg}_{-2h^\vee}$-cohomology with coefficients in $\CV_{\rm matter}$
    \[
        H^{\scriptstyle{\frac{\infty}{2}}+\bullet}(\wh{\fg}_{-2h^\vee}, \fg, \CV_{\rm matter}) = H(\CV, \bQ^-)
    \]
    is also a weak graded-unitary vertex algebra. Moreover, if $\CV_{\rm matter}$ is a graded-unitary vertex algebra then so too is $H^{\scriptstyle{\frac{\infty}{2}}+\bullet}(\wh{\fg}_{-2h^\vee}, \fg, \CV_{\rm matter})$.}

If we take $\CV_{\rm matter}$ to be a symplectic boson VOA, whose graded unitarity is sketched in Section 2.4.1 of \cite{ArabiArdehali:2025fad} and established in Proposition \ref{prop:Sbgradedunitarity}, this theorem implies that the VOAs coming from all Lagrangian $\CN=2$ superconformal gauge theories are graded unitary.

The remaining results in Section \ref{sec:consequences} capitalize on the analogy with K\"{a}hler geometry. In Proposition \ref{prop:automorphisms} we show that the cohomology of this complex is naturally equipped with a unitary action of ${\rm USp}(2)$ by vertex algebra automorphisms extending the ${\rm U}(1)$ action with graded subspaces $H^{\scriptstyle{\frac{\infty}{2}}+d}(\wh{\fg}_{-2h^\vee}, \fg, \CV_{\rm matter})$, reminiscent of the Lefschetz $\fsl(2)$ action on the cohomology of compact K\"{a}hler manifolds. Next, we transport results of Deligne, Griffiths, Morgan, and Sullivan \cite{DGMS} to establish a vertex-algebraic analogue of formality in Theorem \ref{thm:formality}, \emph{i.e.} we prove that the differential graded (DG) vertex algebras underlying relative semi-infinite cohomology of weak graded-unitary vertex algebras (with a good action) are quasi-isomorphic to their cohomology:

\noindent \textbf{Theorem} (Theorem \ref{thm:formality})\textbf{.}
    \textit{The maps
    \[
        (\CV, \bQ^-) \overset{i}{\longleftarrow} (\ker \bQ^+, \bQ^-) \overset{\pi}{\longrightarrow} (H(\CV, \bQ^+), 0)
    \]
    where $i$ is the inclusion of the subcomplex $(\ker \bQ^+, \bQ^-)$ and $\pi$ is the projection onto $\bQ^+$-cohomology, are quasi-isomorphisms of DG vertex algebras.}

\noindent Although the homotopy theory of vertex algebras is not yet well-developed, see, \emph{e.g.}, \cite{BD04, Caradot2023, Caradot2024} for some preliminary steps in this direction, we view this notion of formality as implying that the vertex-algebraic homotopy type of the DG vertex algebra $(\CV, \bQ^-)$ as being the same as its cohomology. In particular, there should be no Massey-like higher products on cohomology.

As another useful consequence of weak graded unitarity, we consider the case where the Lie algebra $\fg$ is a direct sum $\fg = \fg_1 \oplus \fg_2$; the differentials, their adjoints, and their Laplacians can then be written as a sum of two terms, one for $\fg_1$ and the other for $\fg_2$, and we show in Proposition \ref{prop:iteratedformality} that the total complex $(\CV, \bQ^-)$ is quasi-isomorphic to the iterated cohomology. An immediate corollary is that when $\fg$ the relative semi-infinite cohomology for $\fg$ is isomorphic to an iterated relative semi-infinite cohomology for its direct summands, \emph{cf.} Corollary \ref{cor:iteratedcohomology}.

\textbf{Corollary} (Corollary \ref{cor:iteratedcohomology})\textbf{.}
    \textit{Let $\fg = \bigoplus_{k=1}^N \fg_k$ be a direct sum decomposition of $\fg$, then there is an isomorphism of vertex algebras
    \[
        H(\CV, \bQ^-) \simeq H(\dots H(\CV, \bQ^-_1), \dots,  \bQ^-_N{}^*)~.
    \]
    }

It is worth noting that this result is expected from the perspective of four-dimensional physics, \emph{i.e.} for genuinely graded-unitary vertex algebras: the gauge couplings of superconformal gauge theories are exactly marginal and so it is inconsequential how one arrives at a given point the space of gauge couplings -- one could turn them on all at once, corresponding to the left-hand side, or one at a time, corresponding to the right-hand side, and arrive at the same result. The same cannot be said for the more general physical setups giving rise to weak graded-unitary vertex algebras, \emph{e.g.} in the examples coming from three-dimensional gauge theories where the gauge couplings are dimensionful and it is not guaranteed that one can move around in parameter space without consequence.

In Section \ref{sec:PVAandHLCR}, we describe other algebraic structures relevant to four-dimensional physics that can be extracted from knowing the graded-unitary vertex algebra $\CV$. We first consider the Poisson vertex algebra (PVA) ${\rm gr}_\mathfrak{R} \CV$ obtained from the associated graded with respect to the $\mathfrak{R}$-filtration of $\CV$ and establish in Lemma \ref{lem:HermitianPVA} that it inherits a positive-definite Hermitian inner product from that of $\CV$. Moreover, we prove an analogue of Theorem \ref{thm:BRSTKahlerpackage} in this context, see Proposition \ref{prop:PVAKahler} for more details. It is important to note that the inner product on ${\rm gr}_\mathfrak{R} \CV$ inherited from $\CV$ is not invariant with respect to its vertex Lie structure and so it is much less natural than that of $\CV$. Nonetheless, all of the consequences established in Section \ref{sec:consequences} for $\CV$ are similarly inherited by ${\rm gr}_\mathfrak{R} \CV$. For example, the associated graded DG PVAs of relative semi-infinite cochain complexes of graded-unitary vertex algebras with a good Hamiltonian $\wh{\fg}_{-2h^\vee}$ action are formal.

The last objects of interest are certain finite-type Poisson algebras that can be extracted from the Poisson vertex algebra ${\rm gr}_\mathfrak{R} \CV$ called the Hall--Littlewood chiral ring and denoted ${\rm HL}_{\CV}$.%
\footnote{There is additionally the Hall--Littlewood anti-chiral ring, but we do not focus on it as it is equivalent to the Hall--Littlewood chiral ring at the level of generality we consider.} %
We define these Poisson algebras using the additional $\Z$ grading by (twice) ${\rm U}(1)_r$ together with another modest addition to the above structure: this grading must satisfy a \emph{BPS bound} (see Definition \ref{dfn:HLOps}). When $\CV$ is the relative semi-infinite cochain complex with coefficients in a graded-unitary vertex algebra $\CV_{\rm matter}$, we relate the Hall--Littlewood chiral rings of $\CV$ and $\CV_{\rm matter}$ in Proposition \ref{prop:HLBRST}: it is given by derived Poisson reduction in the sense of \cite{Calaque2015, Safronov2017}.

\noindent \textbf{Proposition} (Proposition \ref{prop:HLBRST})\textbf{.}
    \textit{The Hall--Littlewood chiral ring ${\rm HL}_\CV$ of $\CV$ can be identified with the derived Poisson reduction of ${\rm HL}_{\CV_{\rm matter}}$ by $G$. In other words,
    \[
    	{\rm Spec}~{\rm HL}_{\CV} \simeq ({\rm Spec}~{\rm HL}_{\CV_{\rm matter}})/\!\!/ G~.
    \]
    }

\noindent Paired with the PVA analogue of Theorem \ref{thm:formality}, an immediate corollary of this result is that this derived Poisson reduction is formal, \emph{cf.} Corollary \ref{cor:HLformality}. For example, this establishes the formality of the derived Poisson reduction $\CR/\!\!/G$ for any complex symplectic representation $\CR$ of $G$ satisfying
    \[
        \Tr_\CR(X^2) = 2 \Tr_{\rm ad}(X^2)
    \]
for all $X \in \fg$. We are not aware of any similar results concerning the formality of derived Poisson reductions.

The results reported here leave open several interesting avenues to pursue, some of which we summarize here.
\begin{enumerate}
    \item We have established that the DG vertex algebras underlying relative semi-infinite cohomology of weak graded-unitary vertex algebras generally admit large symplectic outer automorphism groups. What is their physical interpretation?%
    \footnote{\label{footnote:outer_speculations}K. Costello has suggested to us that in the context of four-dimensional physics, this outer automorphism group could be a manifestation of $S$-duality: from the perspective of the holomorphic-topological twist, which can be $\Omega$-deformed to reproduce the VOAs \cite{Oh:2019bgz, Jeong:2019pzg}, the two gauginos exchanged by ${\rm USp}(2)$ descend to Wilson and 't Hooft lines. Alternatively, from a three-dimensional perspective, the diagonal torus of our ${\rm USp}(2)$ can be identified with the diagonal torus of the ${\rm SU}(2)_C$ symmetry, which suggests that our automorphism could be an avatar of this $R$-symmetry group but it is not entirely obvious that this needs to be the case.}%

    \item In some examples, multiple of the ${\rm USp}(2)$ automorphisms found in this note combine and enhance to a larger automorphism group. For example, the VOAs coming from a genus-$g$ class $\CS$ theory, \emph{e.g.} when realized as a suitable relative semi-infinite cohomology of the VOAs appearing in \cite{Arakawa:2018egx}, should witness an enhancement of ${\rm USp}(2)^{\otimes g}$ to ${\rm USp}(2g)$; see \cite{Beem:2021jnm} for the genus-2 class $\CS$ theory of type $A_1$, where two copies of ${\rm USp}(2)$ combine into a ${\rm USp}(4)$ automorphism group. How can this enhancement be understood systematically?
    
    \item The VOAs appearing in the SCFT/VOA correspondence are rather geometric in flavor and are often thought of as chiral quantizations of their parent SCFT's Higgs branch of vacua. These Higgs branches are naturally hyperk\"{a}hler cones and their twistor $\P^1$ of complex structure structures are rotated by the same ${\rm SU}(2)_R$ symmetry used to define the filtration in their graded-unitary structures. Can the graded-unitary structure on such a VOA be understood as arising from a kind of chiral quantization of the twistor $\P^1$ of the Higgs branch and can this be used to relate the hyperk\"{a}hler metric on the parent SCFT's Higgs branch \cite{Hitchin1987} to vertex algebraic data?
\end{enumerate}

\section{\label{sec:unitary}Graded-unitary vertex algebras}

In this section, we review and refine the definition of a graded-unitary vertex algebra introduced recently in \cite{ArabiArdehali:2025fad}, and discuss the simple but relevant examples of symplectic boson and symplectic fermion VOAs. We will assume familiarity with the notion of a vertex (operator) algebra and will frequently omit the state-field correspondence $Y(-,z)$, instead writing, \emph{e.g.}, $x(z)$ rather than $Y(x,z)$. We denote the vacuum vector by $|0\rangle$, the translation operator by $\partial$, the conformal vector (when present) by $\omega$, and the modes of the stress tensor, \emph{i.e.}, the field associated to the conformal vector, by
\begin{equation*}
    T(z) \coloneqq Y(\omega, z) = \sum_{n \in \Z} L_n z^{-n-2}~.
\end{equation*}
Our conventions for mode indexing (for states other than the conformal vector) are such that
\begin{equation*}
    x(z) \equiv Y(x, z) = \sum_{n \in \Z} x_{n} z^{-n-1}
\end{equation*}
for all $x \in \CV$ (\emph{i.e.}, we use math conventions, albeit without parentheses on modes, rather than physics conventions). We \emph{do not} require (half\=/)integrality of conformal weights to be correlated with Grassmann parity.

\subsection{\label{subsec:mobius}M\"{o}bius vertex algebras and invariant bilinear forms}

Although the SCFT/VOA correspondence in four dimensions leads to graded-unitary VOAs, we will work in greater generality here. The largest class of vertex algebras we will consider relaxes the Virasoro symmetry of a VOA to an action of global conformal transformations.

\begin{dfn}
    A \defterm{M\"{o}bius vertex algebra} is a $\frac{1}{2}\Z$-graded vertex algebra $\CV = \bigoplus_{h\in \scriptstyle{\frac{1}{2}}\Z} \CV_h$ with $\dim \CV_h < \infty$ and $\CV_h = 0$ for $h \ll 0$ equipped with endomorphisms $\{L_{\pm1}, L_0\}$ satisfying
    \begin{equation*}
        [L_m, L_n] = (m-n) L_{m+n}~,
    \end{equation*}
    such that
    \begin{enumerate}
        \item $L_m |0\rangle = 0$~,
        \item $L_{-1} = \partial$~,
        \item $L_0|_{\CV_h} = h ~ \id_{\CV_{h}}$~
        \item $[L_m, Y(x, z)] = \sum_{n=0}^{m+1} \binom{m+1}{n} z^{m+1-n} Y(L_{n-1} x, z)$
        for every $x \in \CV$.
    \end{enumerate}
    A vector $x \in \ker L_1$ is called a \defterm{quasi-primary} and a vector $y \in {\rm im}~L_{-1}$ is called a \defterm{descendant}.
\end{dfn}

VOAs are trivially examples of M\"{o}bius vertex algebras, but there are M\"{o}bius vertex algebra that are not VOAs.

\begin{physrmk}
     The (twisted-translated) algebra of Schur operators restricted to lie on half-BPS codimension-two defects in six-dimensional $(2,0)$ SCFTs are M\"{o}bius vertex algebras rather than VOAs \cite{Beem:2014kka}. Notable examples of M\"{o}bius vertex algebras that are expected to appear in this context are simple affine current algebras $L_k(\fg)$ at \emph{critical level} $k = -h^\vee$ and Drinfel'd--Sokolov reductions thereof. M\"{o}bius vertex algebras are sometimes instead called quasi-VOAs, \emph{cf.} \cite{FHL1993}.
\end{physrmk}

M\"{o}bius vertex algebras admit an appropriate notion of invariant bilinear form; this definition is due to Frenkel, Huang, and Lepowski in the context of VOAs \cite{FHL1993}, see also \cite{Li1994}.\footnote{Certain choices of convention must be made in defining invariant bilinear forms in the presence of Grassmann-odd vectors and half-integer conformal weights. These choices are naturally related to a choice of definition for the contragredient representation of such a vertex algebra, see \cite{CKM24}.}

\begin{dfn}
    Let $\CV$ be a M\"{o}bius vertex algebra. A bilinear form $(-,-)$ on $\CV$ is said to be \defterm{invariant} if
    \[
        (Y(x, z) u, v) = (-1)^{|x||u|} (u, Y(e^{z L_1} (e^{\ii \pi}z^{-2})^{L_0} x, z^{-1}) v)
    \]
    for all $u, v, x \in \CV$, where $|x|$, $|u|$ denote the Grassmann parity of $x$, $u$, and
    \[
        (L_m u, v) = (u, L_{-m} v)
    \]
    for all $m \in \{-1,0,1\}$.
\end{dfn}

\begin{rmk}
    The second condition implies that the pairing of vectors in $\CV_{h_1}$ and $\CV_{h_2}$ vanishes unless $h_1 = h_2$. A mild extension of \cite[Prop. 5.3.6]{FHL1993} to account for Grassmann-odd vectors and half-integral conformal weights implies that the restriction of $(-,-)$ to $\CV_h$ satisfies
    \[
        (x,y) = (-1)^{|x||y| + 2h} (y,x)
    \]
    for vectors $x$, $y$ that are homogeneous in Grassmann parity and conformal weight. If $\CV$ is a VOA, the first condition implies the second upon setting $x = \omega$.
\end{rmk}

\begin{rmk}
     A M\"{o}bius structure gives rise to a natural module structure on the restricted dual $M'$ of any grading-restricted generalized%
     \footnote{This condition requires that the module is graded $M = \bigoplus_{h \in \C} M_h$ such that
     \begin{itemize}
         \item[1)] $\dim M_h < \infty$ for all $h \in \C$,
         \item[2)] $M_{h+n} = 0$ for all $h \in \C$ and $n \in \Z$ sufficiently negative,
         \item[3)] $L_0$ can be written as $L_0 = L_{0,S} + L_{0,N}$, where $L_{0,S}|_{M_h} = h~\id_{M_h}$ and $L_{0,N}$ is nilpotent.
     \end{itemize}
     The vacuum module of a M\"{o}bius vertex algebra is, by the definition, a grading-restricted generalized module.} %
     module $M$ by the formula
    \[
        \langle Y(x, z) m', m\rangle = (-1)^{|x||m'|} \langle m', Y(e^{z L_1} (e^{\ii \pi} z^{-2})^{L_0} x, z^{-1}) m\rangle~,
    \]
    where $x \in \CV$, $m' \in M'$, $m \in M$, and $\langle-,-\rangle$ denotes the natural pairing of $M'$ and $M$. The resulting module is called the \emph{contragredient dual}. An invariant bilinear form on $\CV$ can equivalently be viewed as a $\CV$-module homomorphism $\CV \to \CV'$ given by $x \mapsto (x,-)$.
\end{rmk}

We will need a strengthened version of M\"{o}bius vertex algebras. The following definitions are due to Dong, Li, Mason, and Montague \cite{DLMM} and Dong and Mason \cite{DongMason2004}, respectively, albeit in the context of VOAs.

\begin{dfn}
    A M\"{o}bius vertex algebra is said to be \defterm{of CFT type} if the $\frac{1}{2}\Z$ grading satisfies $\CV_h = 0$ for $h < 0$ and $\CV_0 = \C |0\rangle$. It is said to be \defterm{of strong CFT type} if additionally $L_{1} \CV_1 = 0$.
\end{dfn}

\begin{physrmk}
    Vertex algebras that arise from four-dimensional $\CN=2$ SCFTs via the mechanism of \cite{Beem:2013sza} are always of strong CFT type. In brief, all Schur operators other than the identity operator have $h>0$, and those with $h = 1$ come from free vector multiplets or conserved currents and are necessarily quasi-primaries as a consequence of four-dimensional Ward identities.
\end{physrmk}

\begin{rmk}\label{rmk:strongpairing}
    Vertex algebras of strong CFT type come equipped with a unique-up-to-rescaling invariant bilinear form obtained from suitable two-point functions. Explicitly, (fixing normalization by imposing $(|0\rangle,|0\rangle)=1$) this form is given by%
    \footnote{If $x \in \CV_{h_x}$ is a quasi-primary, then this pairing takes the simplified form
    \[
        (x,y) \coloneqq \lim\limits_{z \to \infty} (e^{\ii \pi} z^2)^{h_x} \langle 0 | x(z) y(0) | 0 \rangle~,
    \]
    which is perhaps a more familiar expression from two-dimensional CFT.}
    %
    \[
        (x, y) = {\rm Res}_{z}~ z^{-1} \langle 0| Y(e^{z L_1} (e^{\ii \pi} z^{-2})^{L_0}x,z^{-1}) y_{-1}|0\rangle~,
    \]
    where $\langle 0|$ is the linear functional on $\CV$ that annihilates $\CV_h$ for $h > 0$ and sends the vacuum vector $|0\rangle$ to $1$. That this defines an invariant bilinear form on $\CV$ follows from, \emph{e.g.}, \cite[Theorem 3.1]{Li1994} and that $\CV$ is of strong CFT type.
\end{rmk}

Our interest will be in M\"{o}bius vertex algebras of CFT type for which there is a \emph{nondegenerate} invariant bilinear form. We establish that such vertex algebras are necessarily of strong CFT type and simple.

\begin{prop}\label{prop:strong}
    Let $\CV$ be a M\"{o}bius vertex algebra of CFT type equipped with a nondegenerate invariant bilinear form. Then $\CV$ is of strong CFT type.
\end{prop}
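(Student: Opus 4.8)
The plan is to show that the hypothesis $\CV_0 = \C|0\rangle$ together with nondegeneracy of the invariant bilinear form forces $L_1\CV_1 = 0$. First I would recall the basic consequence of invariance already noted in the remarks: the form pairs $\CV_{h_1}$ with $\CV_{h_2}$ trivially unless $h_1 = h_2$, and the restriction to each $\CV_h$ is (up to sign) symmetric; since the global form is nondegenerate, each restricted form $(-,-)|_{\CV_h}$ must itself be nondegenerate. The target of the argument is the subspace $L_1\CV_1 \subseteq \CV_0 = \C|0\rangle$, so the only way $L_1\CV_1$ can be nonzero is if there exists $x \in \CV_1$ with $L_1 x = \lambda |0\rangle$ for some $\lambda \neq 0$.

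Next I would use the $L_0$-$L_{\pm 1}$ relations to pin down what such an $x$ would mean. From $[L_1, L_{-1}] = 2L_0$ we get, for $x \in \CV_1$, that $L_1 L_{-1} x = L_{-1} L_1 x + 2 x = \lambda L_{-1}|0\rangle + 2x = 2x$ (using $L_{-1}|0\rangle = \partial|0\rangle = 0$). The idea is then to examine the pairing of $x$ against the descendant $L_{-1} y$ for $y \in \CV_0 = \C|0\rangle$, i.e.\ against $L_{-1}|0\rangle = 0$; that gives nothing directly, so instead I would pair within $\CV_1$ itself and exploit the adjointness $(L_1 u, v) = (u, L_{-1} v)$. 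For any $v \in \CV_1$, write $(x, v)$ and use that $v' := L_1 x \in \C|0\rangle$ means $\lambda(|0\rangle, w) = (L_1 x, w)$ for $w \in \CV_1$... but $(|0\rangle, w) = 0$ since $w \in \CV_1$ and the form respects the grading. Hence $(L_1 x, w) = 0$ for all $w \in \CV_1$, and so by adjointness $(x, L_{-1} w) = 0$ for all $w \in \CV_1$. This says $x$ is orthogonal to all descendants $L_{-1}\CV_1 \subseteq \CV_2$; but $x \in \CV_1$, so this is automatic and still gives nothing. The genuinely useful relation is instead the following: pair $x$ with an arbitrary element of $\CV_1$ using that $L_1 x$ lands in the vacuum line and that $(|0\rangle,|0\rangle) = 1 \neq 0$. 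The cleanest route is to observe that $(L_{-1}|0\rangle, x) = (|0\rangle, L_1 x) = \lambda(|0\rangle,|0\rangle) = \lambda$, while $L_{-1}|0\rangle = \partial|0\rangle = 0$, forcing $\lambda = 0$.

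So the core of the argument is the one-line computation: for $x \in \CV_1$ with $L_1 x = \lambda|0\rangle$,
\[
    \lambda = \lambda(|0\rangle,|0\rangle) = (|0\rangle, L_1 x) = (L_{-1}|0\rangle, x) = (0, x) = 0,
\]
using normalization $(|0\rangle,|0\rangle) = 1$, the adjointness $(L_{-1}u,v) = (u,L_1 v)$ (the $m=1$ case of the second axiom for invariant forms), and $L_{-1}|0\rangle = \partial|0\rangle = 0$ (axiom (1) for the M\"obius structure, or simply that $\partial$ annihilates the vacuum). Hence $L_1\CV_1 = 0$ and $\CV$ is of strong CFT type. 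The main thing to be careful about is the sign/convention in the adjointness relation for half-integral weights and Grassmann-odd vectors, but since $|0\rangle$ is even and in $\CV_0$, and $x \in \CV_1$ with integral weight, no subtleties arise here; one should also note that nondegeneracy of the form is actually not needed for this particular implication — only the existence of an invariant form with $(|0\rangle,|0\rangle) \neq 0$ — which matches the fact that CFT type already guarantees such a form exists (Remark \ref{rmk:strongpairing}), though the proposition is stated with nondegeneracy presumably because the companion simplicity statement needs it. I expect no real obstacle in this step; the more delicate companion fact (that $\CV$ is additionally simple) is where the nondegeneracy hypothesis does real work, via an analysis of the maximal ideal and its orthogonal complement.
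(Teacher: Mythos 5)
Your proof is correct, but it takes a genuinely different and more elementary route than the paper. The paper argues abstractly: the nondegenerate form gives an isomorphism $\CV \cong \CV'$ of modules, and then Li's classification $\Hom(\CV,\CV') \cong (\CV_0/L_1\CV_1)^*$ together with a dimension count ($\leqslant 1$ from CFT type, $\geqslant 1$ from self-duality) forces $L_1\CV_1 = 0$. You instead observe directly that $L_1\CV_1 \subseteq \CV_0 = \C|0\rangle$ and kill the coefficient via
\[
\lambda\,(|0\rangle,|0\rangle) = (|0\rangle, L_1 x) = (L_{-1}|0\rangle, x) = 0,
\]
using only the $m=-1$ adjointness axiom, $L_{-1}|0\rangle = 0$, and the fact that nondegeneracy restricted to the one-dimensional space $\CV_0$ gives $(|0\rangle,|0\rangle)\neq 0$. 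This is a clean, self-contained argument that avoids importing Li's theorem; the paper's approach buys a little more in that the same machinery immediately yields uniqueness of the form up to scale (used in the subsequent corollary), whereas your computation establishes only the vanishing of $L_1\CV_1$. One small correction to your closing aside: Remark \ref{rmk:strongpairing} concerns vertex algebras of \emph{strong} CFT type, so CFT type alone does \emph{not} guarantee the existence of an invariant form with nonzero vacuum norm --- indeed, by Li's result the space of invariant forms is $(\CV_0/L_1\CV_1)^*$, which vanishes precisely when $L_1\CV_1 = \CV_0$. This does not affect your proof, which correctly draws $(|0\rangle,|0\rangle)\neq 0$ from the nondegeneracy hypothesis; the meandering middle paragraph could simply be deleted in favour of the final one-line computation.
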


\begin{proof}
    The bilinear form $(-,-)$ is equivalently a morphism from $\CV$ to its contragredient dual $\CV'$, as $V$-modules. Nondegeneracy of this pairing implies that this is an isomorphism, so a VOA $\CV$ equipped with a nondegenerate bilinear form is necessarily self-dual. Then Proposition 3.4 and the proof of Theorem 3.1 of \cite{Li1994}, which hold in the more general context of M\"{o}bius vertex algebras if one is only interested in the vacuum module, imply that there is an isomorphism of vector spaces
    \begin{equation*}
        {\rm Hom}(\CV, \CV') \simeq (\CV_0 / L_1 \CV_1)^*~.
    \end{equation*}
    The right-hand side is at most one-dimensional when $\CV$ is of CFT type, whereas self-duality implies that the left-hand side is at least one-dimensional. We can thus conclude both sides are one-dimensional and hence $L_1 \CV_1 = 0$.
\end{proof}

\begin{cor}\label{cor:simple}
    Let $\CV$ be a M\"{o}bius vertex algebra of CFT type equipped with a nondegenerate invariant bilinear form. Then $\CV$ is simple.
\end{cor}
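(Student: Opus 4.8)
The plan is to deduce simplicity directly from nondegeneracy by the standard ``orthogonal complement'' argument. By Proposition~\ref{prop:strong} we may assume $\CV$ is of strong CFT type, so that $\CV_0 = \C|0\rangle$, $\CV_h = 0$ for $h<0$, the invariant form pairs $\CV_{h_1}$ with $\CV_{h_2}$ trivially unless $h_1 = h_2$, and on $\CV_h$ it is symmetric up to the sign $(-1)^{|x||y|+2h}$ (all recorded in the remarks following the definition of an invariant bilinear form). Write $(-,-)$ for the given nondegenerate invariant form.

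First I would show that for any ideal $I \subseteq \CV$ the orthogonal complement $I^\perp = \{v \in \CV : (u,v) = 0 \text{ for all } u \in I\}$ is again an ideal. Fix $v \in I^\perp$, $u \in I$ and $x \in \CV$; by the near-symmetry of the form it suffices to show that every coefficient of the formal series $(Y(x,z)v, u)$ vanishes, as these coefficients are, up to sign, the pairings $(u, x_n v)$. By invariance one has $(Y(x,z)v,u) = (-1)^{|x||v|}\big(v,\, Y(e^{zL_1}(e^{\ii\pi}z^{-2})^{L_0}x,\, z^{-1})u\big)$, and expanding the operator on the right --- using that $L_1$ lowers conformal weight by one, so $L_1^k x$ vanishes for $k \gg 0$ --- exhibits each coefficient of this series as a finite sum of pairings $(v, y_m u)$ with $y = L_1^k x \in \CV$. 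Since $I$ is an ideal, $y_m u \in I$, so every such pairing vanishes; hence $x_n v \in I^\perp$ for all $n$, and $I^\perp$ is an ideal.

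Then I would run the dichotomy. Let $I \subsetneq \CV$ be a proper ideal. Ideals of a M\"obius vertex algebra are graded, and the ideal generated by $|0\rangle$ is all of $\CV$ (since $x = x_{-1}|0\rangle$ for every $x$), so $I \cap \CV_0 = 0$, i.e.\ $I \subseteq \bigoplus_{h\geq1}\CV_h$. Consequently $(u,|0\rangle) = 0$ for every $u \in I$, because $|0\rangle \in \CV_0$ pairs trivially with the higher weight spaces; that is, $|0\rangle \in I^\perp$. By the previous step $I^\perp$ is an ideal containing $|0\rangle$, hence $I^\perp = \CV$, which says $(u,v) = 0$ for all $u \in I$ and all $v \in \CV$. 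Nondegeneracy of $(-,-)$ then forces $I = 0$. Since $\CV \neq 0$, the only ideals are $0$ and $\CV$, i.e.\ $\CV$ is simple.

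The only point needing care is the formal-series bookkeeping in the second step --- keeping track of the Grassmann signs and the $z$-reindexing --- which is entirely routine; one could alternatively take $(-,-)$ to be the canonical two-point-function form of Remark~\ref{rmk:strongpairing}, which is legitimate since $\Hom(\CV,\CV')$ is one-dimensional by the proof of Proposition~\ref{prop:strong}. I do not expect any serious obstacle.
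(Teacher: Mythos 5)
Your argument is correct, and it reaches the conclusion by a dual route to the paper's. The paper argues directly that any nonzero ideal contains the vacuum: for $0 \neq x \in I \cap \CV_h$ choose $y \in \CV_h$ with $(x,y) \neq 0$; invariance gives $0 \neq (x,y) = (-1)^{|x||y|+2h}(y_{2h-1}x, |0\rangle)$, so $y_{2h-1}x \in I$ is a nonzero multiple of $|0\rangle$, and cyclicity of the vacuum forces $I = \CV$. You instead show that a proper ideal is orthogonal to $|0\rangle$, that $I^\perp$ is itself an ideal (your extra lemma, proved by the same invariance computation), and hence that $I^\perp = \CV$ and $I = 0$ by nondegeneracy. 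The underlying mechanism --- invariance of the form plus cyclicity of $|0\rangle$ --- is identical in both proofs, but your packaging costs you two things the paper avoids: the auxiliary lemma that $I^\perp$ is an ideal, and the assertion that ideals of a M\"obius vertex algebra are graded, which (unlike for a VOA, where $L_0 = \omega_1$ is a mode of an element of the algebra) is a matter of convention rather than automatic; your step ``$I \cap \CV_0 = 0$, \emph{i.e.}\ $I \subseteq \bigoplus_{h\geqslant 1}\CV_h$'' genuinely uses gradedness. The paper's version only ever touches a single homogeneous element of $I$ and so is slightly more economical (and an inhomogeneous $x \in I$ can be handled there by pairing against its top-weight component). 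Both proofs are valid at the paper's level of rigor, and your closing observation that one may as well take $(-,-)$ to be the canonical form of Remark~\ref{rmk:strongpairing} matches the first line of the paper's proof.
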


\begin{proof}
    By Proposition \ref{prop:strong} we have that $\CV$ is of strong CFT type, and hence any bilinear form is given by (a multiple of) the one in Remark \ref{rmk:strongpairing}. To conclude that $\CV$ is simple we note that it cannot have any nontrivial ideals as nondegeneracy implies that any $x \in \CV_h$ can be mapped to the vacuum vector: there exists $y \in \CV_h$ such that $(x, y) \neq 0$ and so we have
    \[
        0 \neq (x, y) = (x, y_{-1} |0\rangle) = (-1)^{|x||y| + 2h} (y_{2h-1} x, |0\rangle)
    \]
    hence $y_{2h-1} x$ is a nonzero multiple of $|0\rangle$. As $|0\rangle$ is a cyclic vector for $\CV$, the result follows.
\end{proof}

\subsection{\label{subsec:good_filtrations}Nondegenerate half-integral filtrations}

The next piece of structure we will need is (a mild generalization of) the notion of a good filtration as defined by Li in \cite{Li2004}.

\begin{dfn}[Def. 2.1 of \cite{ArabiArdehali:2025fad}]
    A \defterm{half-integral filtration} of a $\Z_2$-graded vertex algebra $\CV = \CV^0 \oplus \CV^{\frac{1}{2}}$ is an increasing, exhaustive filtration
    \begin{equation*}
        \dots \subseteq \CF_{-1+\alpha} \CV^\alpha \subseteq \CF_{0 +\alpha} \CV^\alpha \subseteq \CF_{1+\alpha} \CV^\alpha \subseteq \dots~, \qquad \CV^\alpha = \bigcup_{p\in\Z} \CF_{p + \alpha} \CV^\alpha~, \qquad \alpha = 0, \tfrac{1}{2}~,
    \end{equation*}
    such that the vacuum vector belongs to $\CF_0 \CV^0$ and
    \[
        x_n y \in \CF_{p + q + \alpha + \beta} \CV^{\alpha + \beta}~
    \]
    for every $x \in \CF_{p + \alpha} \CV^\alpha$, $y \in \CF_{q + \beta} \CV^\beta$, and $n \in \Z$. A half-integral filtration is called \defterm{good} if additionally
    \[
        x_n y \in \CF_{p + q + \alpha + \beta - 1} \CV^{\alpha + \beta}~,\qquad n\geqslant0~.
    \]
    A \defterm{half-integer-filtered vertex algebra} is a $\mathbb{Z}_2$-graded vertex algebra equipped with a half-integral good filtration.
\end{dfn}

\begin{rmk}
    Note that $\mathcal{V}^0$ is itself a filtered vertex algebra in the sense of \cite{Li2004}. We will often simplify the notation and write the filtered components as $\CF_p \CV$ for $p \in \frac{1}{2}\Z$ with the understanding that the elements of $\CF_p \CV$ with $p \in \Z$ (resp. $p \in \frac{1}{2} + \Z$) belong to $\CV^0$ (resp. $\CV^{\scriptstyle{\frac{1}{2}}}$). We denote by $s$ the $\Z_2$ parity automorphism, so $s$ acts as the identity on $\CV^0$ and as $-1$ on $\CV^{\scriptstyle{\frac{1}{2}}}$. Obviously, $s$ preserves the filtration, $s(\CF_p \CV) = \CF_p\CV$.
\end{rmk}

The following is a simple generalization of Proposition 4.1 of \cite{Li2004}, \emph{cf.} Section 2.2 of \cite{ArabiArdehali:2025fad}.

\begin{prop}\label{prop:assocgradedPVA}
    Let $(\CV,\CF_\bullet)$ be a half-integer-filtered vertex algebra. Then the associated graded vector space
    \[
        {\rm gr}_\CF \CV = \bigoplus_{p \in \scriptstyle{\frac{1}{2}}\Z} \CF_{p} \CV / \CF_{p-1} \CV
    \]
    is naturally a $\frac{1}{2}\Z$-graded Poisson vertex algebra.
\end{prop}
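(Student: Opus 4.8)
The plan is to mimic the classical argument (Li's Proposition 4.1) that produces a Poisson vertex algebra structure on the associated graded of a filtered vertex algebra, keeping track of the extra $\mathbb{Z}_2$ grading and the half-integral shift. First I would recall that ${\rm gr}_\CF \CV$ inherits a $\frac{1}{2}\Z$ grading from the filtration, and that a commutative vertex algebra (equivalently a differential commutative algebra) structure arises from the $(-1)$-st product: for $x \in \CF_p\CV$, $y\in\CF_q\CV$, the axiom $x_{-1}y \in \CF_{p+q}\CV$ together with the good-filtration condition $x_n y \in \CF_{p+q-1}\CV$ for $n\geq 0$ shows that, modulo $\CF_{p+q-1}\CV$, the field $Y(x,z)$ acting on $y$ has no singular part, so $\bar x_{(-1)} \bar y := \overline{x_{-1}y}$ is a well-defined, commutative, associative product on ${\rm gr}_\CF\CV$ of degree $p+q$; the translation operator $\partial = L_{-1}$ preserves each $\CF_p$ (it is $x_{-2}|0\rangle$-type, or directly: $\partial$ has filtration degree $0$), so it descends to a derivation. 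The $\mathbb{Z}_2$-parity automorphism $s$ preserves the filtration, so ${\rm gr}_\CF\CV$ is $\mathbb{Z}_2$-graded and the product respects it; one checks commutativity with the Koszul sign $(-1)^{|x||y|}$ exactly as in the unfiltered commutative-superalgebra case.

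Next I would extract the vertex Lie (Poisson) bracket from the singular part. For $x\in\CF_p\CV$, $y\in\CF_q\CV$ and $n\geq 0$, the axiom places $x_n y$ in $\CF_{p+q-1}\CV$, and I claim the class $\{\bar x_{(n)}\bar y\} := \overline{x_n y} \in \CF_{p+q-1}\CV/\CF_{p+q-2}\CV$ is well-defined: if $x\in\CF_{p-1}\CV$ then $x_ny\in\CF_{p+q-2}\CV$ by applying the good condition again, and similarly in $y$. This gives, for each $n\geq 0$, a bilinear operation of degree $-1$ on ${\rm gr}_\CF\CV$; assembling them into $Y_-(\bar x,z)\bar y = \sum_{n\geq 0}(\bar x_{(n)}\bar y) z^{-n-1}$ produces a candidate vertex Lie algebra structure. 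The identities a vertex Lie algebra must satisfy — skew-symmetry, the Jacobi/commutator identity, and compatibility of $\partial$ with the bracket — all follow by reducing the corresponding vertex-algebra identities (Borcherds' identities) modulo the appropriate filtered piece: each identity in $\CV$ is an identity among $x_ny$'s living a priori in $\CF_{p+q}\CV$, and restricting attention to the $\CF_{p+q-1}\CV$ part (which is exactly the singular-product part, by goodness) yields the vertex Lie identity in ${\rm gr}_\CF\CV$. The signs work out with the standard Koszul conventions because $s$ is a filtered automorphism.

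Finally I would verify the Leibniz/compatibility axiom linking the two structures, namely that for each $n\geq 0$ the operation $\bar x_{(n)}(-)$ is a derivation of the commutative product $\bar u_{(-1)}\bar v$, of the appropriate degree. This is again a reduction mod filtration of the Borcherds identity relating $x_n(u_{-1}v)$ to $(x_n u)_{-1}v$, $u_{-1}(x_n v)$, and correction terms; the correction terms involve products $u_j(\cdots)$ with $j\geq 0$ on inputs of filtration degree summing to something that lands in a strictly lower filtered piece, hence vanishes in the associated graded. I expect the main obstacle — or at least the place requiring the most care — to be the bookkeeping of the half-integral shift together with the Grassmann signs: one must consistently track that elements of $\CF_p\CV$ with $p\in\frac12+\Z$ are exactly the odd ones, that the filtration degrees add under all products (including the shift $\alpha+\beta$), and that the good condition degrades the degree by exactly one on the singular products, so that the "$-1$" in the Poisson bracket's degree is genuinely uniform. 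Once these are in place, the verification of the vertex Poisson algebra axioms is a routine termwise passage to the associated graded of the defining identities of $\CV$, exactly parallel to \cite[Prop. 4.1]{Li2004}.
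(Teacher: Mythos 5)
Your proposal is correct and follows exactly the route the paper intends: the paper gives no written proof, simply asserting the result as "a simple generalization of Proposition 4.1 of \cite{Li2004}", and your argument is a faithful elaboration of that generalization (commutative product from the $(-1)$-st product, vertex Lie bracket from the nonnegative products shifted down by one via goodness, axioms checked by reducing Borcherds identities modulo the filtration, with the $\Z_2$ parity and Koszul signs tracked throughout). No gaps.
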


The vertex algebras in which we will ultimately be interested come equipped with a nondegenerate bilinear form that is compatible with the half-integral filtration in the following sense.

\begin{dfn}
    Let $\CV$ be a M\"{o}bius vertex algebra equipped with a nondegenerate invariant bilinear form $(-,-)$. A (half-integral) filtration $\CF_\bullet$ of $(\CV, (-,-))$ is called \defterm{nondegenerate} if the bilinear form $(-,-)$ is nondegenerate when restricted to any filtered component.
\end{dfn}

If $\CF_\bullet$ is a nondegenerate half-integral filtration, then the filtration on the underlying vector space can be canonically refined to a grading by imposing orthogonality between different graded subspaces,
\begin{equation*}
    \CV = \bigoplus_{\substack{h \in \scriptstyle{\frac{1}{2}}\N \\ p \in \scriptstyle{\frac{1}{2}}\Z}} \CV_{h,p}~.
\end{equation*}
The filtration is recovered as usual from the grading,
\begin{equation*}
        \CV^\alpha = \bigoplus_{\substack{h \in \scriptstyle{\frac{1}{2}}\N \\ p \in \Z}} \CV_{h,p + \alpha}~, \qquad \CF_{p + \alpha} \CV^\alpha = \bigoplus_{\substack{h \in \scriptstyle{\frac{1}{2}}\N\\ p' \leqslant p}} \CV_{h, p' + \alpha}~,
\end{equation*}
where $p, p' \in \Z$ and $\alpha = 0, \frac{1}{2}$. With this grading on $\CV$, we can define an order-four linear map $\sigma$ by
\begin{equation*}
    \sigma|_{\CV_{h,p}} = \ii^{2p}~\id_{\CV_{h,p}}~,
\end{equation*}
so $\sigma\circ\sigma=s$. The bilinear form $(-,-)$ restricts to a nondegenerate bilinear form on each graded component $\CV_{h,p}$. Note that the pairing $(x,y)$ vanishes unless $x, y$ both belong to the same graded component, \emph{i.e.}, $h_x = h_y$ and $p_x = p_y$, so in particular, we have
\begin{equation*}
        (\sigma(x), y) = (x, \sigma(y))~.
\end{equation*}
Moreover, it is straightforward to see that the action of M\"{o}bius transformations preserves graded subspaces,
\[
    L_{m} : \CV_{h,p} \to \CV_{h-m,p}~.
\]

We now prove a simple result regarding nondegenerate half-integral filtrations that establishes a shortening condition that limits how much a field can change the corresponding grading.

\begin{prop}\label{prop:shortening}
    Let $\CV$ be a M\"{o}bius vertex algebra of CFT type equipped with a nondegenerate invariant bilinear form $(-,-)$. Let $\CF_\bullet$ be a nondegenerate half-integral filtration of $(\CV, (-,-))$ and
    \begin{equation*}
        \CV = \bigoplus_{\substack{h \in \scriptstyle{\frac{1}{2}} \N\\ p \in \scriptstyle{\frac{1}{2}}\Z}} \CV_{h,p}~,
    \end{equation*}
    the corresponding grading on the underlying vector space. Choose $x \in \CV_{h_x,p}$ and $n \in \Z$ and denote by
    \begin{equation*}
        x_n = \sum_{\substack{p' \in \scriptstyle{\frac{1}{2}}\Z\\ p' \leqslant p}} x^{[p']}_n~,
    \end{equation*}
    the decomposition of $x_n$ into homogeneous components with respect to the grading, \emph{i.e.},
    \[
    x_n^{[p']}:\CV_{h,q}\to\CV_{h+h_x-n-1,q+p'}~.
    \]
    Then the component $x^{[p']}_n$ vanishes unless $p' \equiv p \mod \Z$. Moreover, $x^{[p']}_n = 0$ if $p' < -p$, so $x_n$ can only increase or decrease degree by at most $p$.
\end{prop}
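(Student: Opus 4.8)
The plan is to obtain the lower bound $p'\geqslant -p$ from the (essentially free) upper bound $p'\leqslant p$ by transporting the latter through the nondegenerate invariant form, exploiting the fact that conjugating a mode of $x$ reverses the direction in which it can shift degree. Two of the three assertions are immediate. The congruence $p'\equiv p\bmod\Z$ is just the underlying $\Z_2$-grading: $\CV_{h,p}$ lies in $\CV^0$ when $p\in\Z$ and in $\CV^{1/2}$ when $p\in\tfrac12+\Z$, and the $n$-th products of a vertex superalgebra respect this grading, so a nonzero $x_n^{[p']}$ forces the source $\CV_{h,q}$ and target $\CV_{\bullet,q+p'}$ to differ in $\Z_2$-degree by that of $x$, i.e. $p'-p\in\Z$. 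The bound $p'\leqslant p$ is the half-integral filtration axiom itself: from $x\in\CF_p\CV$ and $u\in\CV_{h,q}\subseteq\CF_q\CV$ we get $x_nu\in\CF_{p+q}\CV$, so in $x_nu=\sum_{p'}x_n^{[p']}u$ every component with $q+p'>q+p$ must vanish. This holds verbatim for \emph{any} vector of pure degree $p$, a fact I will reuse.

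For the lower bound, suppose $x_n^{[p']}u\neq 0$ for some $u\in\CV_{h_u,q}$; it is then a nonzero element of $\CV_{h_v,q+p'}$ with $h_v=h_u+h_x-n-1$, and since $\CF_\bullet$ is nondegenerate the form is nondegenerate on that graded piece, so I can choose $v\in\CV_{h_v,q+p'}$ with $(x_n^{[p']}u,v)\neq 0$. As distinct graded pieces are mutually orthogonal, $(x_nu,v)=(x_n^{[p']}u,v)\neq 0$. Invariance of the form now reads $(x_nu,v)=(-1)^{|x||u|}\big(u,\widehat{x}_{[n]}v\big)$, where $\widehat{x}_{[n]}$ is the coefficient of $z^{-n-1}$ in $Y\!\big(e^{zL_1}(e^{\ii\pi}z^{-2})^{L_0}x,z^{-1}\big)$. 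The decisive point is that $L_0$ and $L_1$ preserve the refined grading — this is the displayed property $L_m:\CV_{h,p}\to\CV_{h-m,p}$ — so $e^{zL_1}(e^{\ii\pi}z^{-2})^{L_0}x$ is, coefficient by coefficient in $z$, a vector of pure degree $p$, namely a finite combination of the $L_1$-descendants $L_1^kx\in\CV_{h_x-k,p}$. Hence $\widehat{x}_{[n]}$ is a finite sum of modes of pure-degree-$p$ vectors, and the reused bound gives that $\widehat{x}_{[n]}$ carries $\CV_{\bullet,r}$ into the span of the pieces $\CV_{\bullet,r+p''}$ with $p''\leqslant p$.

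Putting these together, $\widehat{x}_{[n]}v$ lies in the span of the $\CV_{\bullet,q+p'+p''}$ with $p''\leqslant p$, while $u\in\CV_{h_u,q}$ pairs nontrivially only with the graded piece of degree $q$. Nonvanishing of $(u,\widehat{x}_{[n]}v)$ therefore forces $q=q+p'+p''$ for some $p''\leqslant p$, i.e. $p'=-p''\geqslant -p$, which is the assertion; combined with $p'\leqslant p$ this says $x_n$ shifts degree by at most $p$.

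The step I expect to carry the real weight — less an obstacle than the load-bearing observation — is that the adjoint of a mode of $x\in\CV_{h_x,p}$ is again assembled from modes of vectors of pure degree $p$, namely the $L_1$-descendants of $x$; this is precisely where compatibility of the M\"obius structure with the filtration-induced grading (that $L_1$ preserves degree) is used, and without it conjugation could a priori scramble degrees and the lower bound would fail. Everything else is the same one-line degree count that yields the upper bound, run for the conjugated mode.
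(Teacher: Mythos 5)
Your proof is correct and follows essentially the same route as the paper's: both obtain the lower bound by passing to the adjoint of $x_n$ via the invariant form, observing that $e^{zL_1}(e^{\ii\pi}z^{-2})^{L_0}x$ is built from $L_1^kx\in\CV_{h_x-k,p}$ (since $L_1$ preserves degree), applying the filtration upper bound to those modes, and invoking nondegeneracy on graded pieces. The paper simply writes the adjoint formula $(x_n^{[p']})^* = e^{\ii\pi h_x}\sum_{k\geqslant 0}\tfrac{1}{k!}(L_1^kx)^{[-p']}_{2h_x-n-2-k}$ explicitly where you argue with a test vector $v$; the content is identical.
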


\begin{proof}
    The first assertion follows from the fact that the OPE is compatible with the $\Z_2$ grading, with respect to which $x$ has weight $(-1)^{2p}$.

    For the second assertion, choose $x \in \CV_{h_x, p}$. The invariance $(-,-)$ implies that the adjoint of the mode $x_n^{[p']}$ is 
    \[
        (x_n^{[p']})^* = e^{\ii \pi h_x}\sum_{k \geqslant 0} \frac{1}{k!} (L_1^k x)_{2h_x -n - 2 - k}^{[-p']}~,
    \]
    which is necessarily a finite sum. As $L_1$ preserves $\CF$-degree it follows that $L_1^k x \in \CV_{h_x-k, p}$ and hence the right-hand side vanishes if $-p' > p$ as $\CF_\bullet$ is a filtration. Nondegeneracy of $(-,-)$ implies $x_n^{[p']} = 0$ if $p' < p$, as desired.
\end{proof}

An immediate corollary is that this shortening also holds for elements in a given filtered piece.

\begin{cor}
    Let $\CV$ be a M\"{o}bius vertex algebra of CFT type equipped with a nondegenerate invariant bilinear form $(-,-)$. Let $\CF_\bullet$ be a nondegenerate half-integral filtration of $(\CV, (-,-))$ and
    \begin{equation*}
        \CV = \bigoplus_{\substack{h \in \scriptstyle{\frac{1}{2}} \N\\ p \in \scriptstyle{\frac{1}{2}}\Z}} \CV_{h,p}~,
    \end{equation*}
    the corresponding grading on the underlying vector space. Choose $x \in \CF_p\CV_{h_x}$ and $n \in \Z$ and denote by
    \begin{equation*}
        x_n = \sum_{\substack{p' \in \scriptstyle{\frac{1}{2}}\Z\\ p' \leqslant p}} x^{[p']}_n~,
    \end{equation*}
    the decomposition of $x_n$ into homogeneous components with respect to the grading. Then the component $x^{[p']}_n$ vanishes unless $p' \equiv p \mod \Z$. Moreover, $x^{[p']}_n = 0$ if $p' < -p$, \emph{i.e.}, $x_n$ can only increase or decrease the degree by at most $p$.
\end{cor}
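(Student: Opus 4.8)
The plan is to deduce this directly from Proposition~\ref{prop:shortening} by splitting $x$ into homogeneous pieces with respect to the refined grading. Since the nondegenerate filtration is recovered from that grading via $\CF_p \CV_{h_x} = \bigoplus_{q \leqslant p} \CV_{h_x, q}$, where $q$ ranges over the half-integers congruent to $p$ modulo $\Z$ (the fractional part being fixed by the $\Z_2$-parity of the component $\CV^\alpha$ containing $x$), I would first write $x = \sum_{q \leqslant p} x^{(q)}$ with $x^{(q)} \in \CV_{h_x, q}$. Linearity of $x \mapsto x_n$ then gives $x_n = \sum_{q \leqslant p} (x^{(q)})_n$, and extracting graded components yields $x_n^{[p']} = \sum_{q \leqslant p} (x^{(q)})_n^{[p']}$.

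For the congruence assertion, each $x^{(q)}$ shares the $\Z_2$-parity of $x$, so $q \equiv p \bmod \Z$ for every term, and the first assertion of Proposition~\ref{prop:shortening} applied to $x^{(q)} \in \CV_{h_x, q}$ forces $(x^{(q)})_n^{[p']} = 0$ whenever $p' \not\equiv q \equiv p \bmod \Z$; summing over $q$ gives $x_n^{[p']} = 0$ unless $p' \equiv p \bmod \Z$. For the shortening assertion, fix $p' < -p$. Then for every $q \leqslant p$ we have $-q \geqslant -p > p'$, i.e.\ $p' < -q$, so the second assertion of Proposition~\ref{prop:shortening} applied to $x^{(q)} \in \CV_{h_x, q}$ yields $(x^{(q)})_n^{[p']} = 0$; hence $x_n^{[p']} = \sum_{q \leqslant p} (x^{(q)})_n^{[p']} = 0$, which is precisely the statement that $x_n$ can shift the refined degree by at most $p$.

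The argument is essentially just linearity, so there is no substantive obstacle; the only point demanding a little care is bookkeeping of conventions, namely verifying that the orthogonal refinement $\CF_p \CV_{h_x} = \bigoplus_{q \leqslant p} \CV_{h_x,q}$ is exactly the grading used in Proposition~\ref{prop:shortening} and that the grading-shift index $p'$ means the same thing in the corollary as it does there. Once this identification is made, both assertions of the corollary follow term-by-term from the corresponding assertions of the proposition.
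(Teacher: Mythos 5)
Your proposal is correct and is exactly the argument the paper has in mind: the paper states this as "an immediate corollary" of Proposition \ref{prop:shortening} without writing out a proof, and the intended reasoning is precisely your decomposition of $x \in \CF_p\CV_{h_x}$ into homogeneous pieces $x^{(q)}$ with $q \leqslant p$, $q \equiv p \bmod \Z$, followed by a term-by-term application of the proposition (noting $p' < -p \leqslant -q$ for each $q$). No gaps.
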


Another corollary of this shortening condition is that the filtration must be truncated.

\begin{cor}
    Let $\CV$ be a M\"{o}bius vertex algebra of CFT type equipped with a nondegenerate invariant bilinear form $(-,-)$ and let $\CF_\bullet$ be a nondegenerate half-integral filtration of $(\CV, (-,-))$. Then the graded components satisfy
    \[
        \CV_{h,p} = 0~, \qquad p < 0~.
    \]
    In particular, $\CF_\bullet$ is necessarily truncated, \emph{i.e.}, $\CF_p \CV = 0$ for $p < 0$.
\end{cor}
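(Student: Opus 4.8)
The plan is to deduce both assertions directly from the shortening condition of Proposition~\ref{prop:shortening}, which for a homogeneous vector $x \in \CV_{h_x,p}$ already packages two competing constraints on the modes $x_n$: the decomposition into degree-homogeneous pieces is written as $x_n = \sum_{p' \leqslant p} x_n^{[p']}$ (the restriction $p' \leqslant p$ being forced by $x \in \CF_p\CV$ together with multiplicativity of the half-integral filtration), and in addition $x_n^{[p']} = 0$ whenever $p' < -p$.

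First I would fix $h$ and $p < 0$ and take an arbitrary $x \in \CV_{h,p}$. Since $p < 0$ we have $-p > p$, so there is no $p' \in \tfrac{1}{2}\Z$ with $-p \leqslant p' \leqslant p$; hence every term in the decomposition of $x_n$ vanishes, i.e.\ $x_n = 0$ for all $n \in \Z$. By the vacuum axiom $x = x_{-1}|0\rangle$, so $x = 0$. This proves $\CV_{h,p} = 0$ for all $h \in \tfrac{1}{2}\N$ and all $p < 0$. (One could alternatively run the pairing argument from the proof of Corollary~\ref{cor:simple} --- choose $y \in \CV_{h,p}$ with $(x,y)\neq 0$ and expand $(x,y) = \pm (y_{2h-1}x,|0\rangle)$, noting that reaching $\C|0\rangle$ would require $y_{2h-1}$ to shift degree by $-p > 0$, contradicting the shortening bound --- but the mode-vanishing route is cleaner.)

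For the truncation of the filtration I would then use the relation between the grading and the filtration recorded just before Proposition~\ref{prop:shortening}: for $\alpha \in \{0,\tfrac{1}{2}\}$ one has $\CF_{p+\alpha}\CV^\alpha = \bigoplus_{p' \leqslant p}\CV_{h,p'+\alpha}$, and every index occurring satisfies $p' + \alpha \leqslant p + \alpha$, so if $p + \alpha < 0$ all summands vanish by the first part; thus $\CF_p\CV = 0$ for $p < 0$. I do not expect any real obstacle here: the only point to be careful about is that the ``upper'' constraint $x_n^{[p']} = 0$ for $p' > p$ is genuinely available, but this is precisely the multiplicativity axiom built into the definition of a half-integral filtration and is already reflected in the statement of Proposition~\ref{prop:shortening}.
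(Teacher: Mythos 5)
Your argument is correct and is essentially the paper's own proof: both combine the upper bound $p' \leqslant p$ (from the filtration) with the lower bound from Proposition \ref{prop:shortening} to conclude that every homogeneous component of $x_n$ vanishes when $p<0$, and then invoke $x = x_{-1}|0\rangle$. The only cosmetic difference is that you observe the allowed window $[-p,p]$ is empty, while the paper splits the index range at $p'=0$; the conclusion about the truncation of $\CF_\bullet$ follows in both cases exactly as you describe.
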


\begin{proof}
    Proposition \ref{prop:shortening} implies that the field corresponding to any vector $x \in \CV_{h,p}$ for $p < 0$ must vanish. This is because $x^{[p']}_n = 0$ for all $p' \geqslant 0 (> p)$ as $\CF_\bullet$ is a filtration, whereas Proposition \ref{prop:shortening} implies that it must also vanish for all $p' \leqslant 0 (< -p)$. It follows that $x_n = 0$ for every $n$ and hence $x = x_{-1} |0\rangle = 0$.
\end{proof}

\begin{physrmk}
    This shortening condition is a vertex-algebraic cousin of the shortening condition on star products that arise in the context of three-dimensional $\CN=4$ superconformal field theories \cite{Beem:2016cbd, Etingof:2019guc}. Physically, as in the star-product case, this result captures the fact that the extra grading associated with the filtration is interpreted to be an ${\rm SU}(2)_R$ charge, and the twisted translates of Schur operators with charge $R$ are sums of operators with charges ranging from $-R$ to $R$. The truncation condition realizes the four-dimensional fact that Schur operators necessarily have $R \geqslant 0$.
\end{physrmk}

\subsection{\label{subsec:quaternionic}Quaternionic structures and graded unitarity}

Recall that if $x,y$ are states in a M\"{o}bius vertex algebra with conformal weight $h$, the pairing $(x,y)$ satisfies
\begin{equation*}
    (x, y) = (-1)^{|x||y| + 2h} (y,x)~.
\end{equation*}
In defining notions of unitarity one expects a Hermitian form, which necessitates introducing additional structure with which to modify the bilinear form. Due to the physical similarity between graded-unitary VOAs and Hermitian short star-products coming from the deformation quantization of hyperk\"{a}hler cones---the former \cite{Beem:2013sza, ArabiArdehali:2025fad} being a four-dimensional analogue of the latter \cite{Beem:2016cbd}---we adopt similar nomenclature and notation to \cite{Etingof:2019guc}.

\begin{dfn}
    A \defterm{conjugation} of a vertex algebra $\CV$ is an anti-linear vector space isomorphism $\rho: \CV \to \CV$ that commutes with the translation operator
    \[
        \rho \circ \partial = \partial \circ \rho~,
    \]
    preserves the vacuum vector
    \[
        \rho(|0\rangle) = |0\rangle~,
    \]
    and satisfies
    \[
        \rho(x_n y) = (-1)^{|x||y|} \rho(x)_n \rho(y)
    \]
    for every homogeneous $x, y \in \CV$ and $n \in \Z$. In the M\"{o}bius case, $\rho$ is required to commute with $L_{\pm1,0}$
    \[
        L_m \circ \rho = \rho \circ L_m~, \quad m \in \{-1,0,1\}~.
    \]
    In the VOA case, $\rho$ is required to preserve the conformal vector,
    \[
        \rho(\omega) = \omega~.
    \]
    If $\CF_\bullet$ is a half-integral filtration on $\CV$, we require that $\rho$ preserves each filtered component
    \[
        \rho(\CF_p \CV) \subseteq \CF_p \CV~.
    \]
\end{dfn}

\begin{rmk}
    If $\CV$ is a M\"{o}bius vertex algebra of CFT type, it follows that $\rho(|0\rangle)$ is a non-zero multiple of $|0\rangle$ and hence $\rho$ preserves the vacuum vector automatically as $|0\rangle_{-1} = \id_\CV$. When $\CV$ is a VOA of CFT type, it suffices to require that a conjugation satisfies
    \[
        \rho(x_n y) = (-1)^{|x||y|} \rho(x)_n \rho(y)~,
    \]
    and preserves the conformal vector. Indeed, as $\partial = L_{-1}$, these conditions imply that $\rho$ commutes with $\partial$ and, moreover, ensures that it preserves conformal weight spaces. Finally, the filtration-preserving property of $\rho$, along with the definition of half-integral filtration, implies that $\rho$ commutes with $s$.
\end{rmk}

The following is a straightforward application of the skew-symmetry property of a vertex algebra and the definition of a conjugation.

\begin{lem}
    Let $\CV$ be a M\"{o}bius vertex algebra equipped with a (nondegenerate) invariant bilinear form $(-,-)$ and let $\rho$ be a conjugation of $\CV$. Then
    \[
        (\rho(x), \rho(y)) = \overline{(y,x)}~.
    \]
\end{lem}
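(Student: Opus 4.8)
The plan is to compute both sides of the claimed identity by unwinding the definition of an invariant bilinear form and using the defining compatibilities of the conjugation $\rho$ with the vertex operations and with the M\"{o}bius operators. Since both sides are anti-linear in each of the arguments $x,y$ (the left-hand side because $\rho$ is anti-linear, the right-hand side because complex conjugation of the bilinear form is anti-linear), it suffices to check the identity on homogeneous vectors; and since $(-,-)$ pairs $\CV_{h_1}$ with $\CV_{h_2}$ trivially unless $h_1 = h_2$, we may assume $x \in \CV_{h_x}$, $y \in \CV_{h_y}$ with $h_x = h_y =: h$, as otherwise both sides vanish. Note $\overline{(y,x)}$ similarly vanishes unless $h_x = h_y$, so this reduction is consistent.

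The key computation is to express $(\rho(x),\rho(y))$ using invariance applied to the state $\rho(x)$ acting on $\rho(y)$. First I would write
\[
    (\rho(x), \rho(y)) = (\rho(x), \rho(y)_{-1}|0\rangle) = (-1)^{|x||y|}\,\big(\rho(x)_{?}\cdots, \dots\big),
\]
using invariance in the form $(Y(u,z)v,w) = (-1)^{|u||v|}(v, Y(e^{zL_1}(e^{\ii\pi}z^{-2})^{L_0}u, z^{-1})w)$ with $u = \rho(y)$ (so as to move $\rho(y)$ onto the left slot) — or, more cleanly, just extract the relevant residue/mode. Then I would use the defining property $\rho(x_n y) = (-1)^{|x||y|}\rho(x)_n\rho(y)$, together with $\rho \circ \partial = \partial\circ\rho$, $\rho(|0\rangle) = |0\rangle$, and the commutation of $\rho$ with $L_{-1},L_0,L_1$, to recognise the resulting expression as $\rho$ applied to the analogous expression defining $(y,x)$ (possibly with $x,y$ interchanged, matching the skew-symmetry $(x,y) = (-1)^{|x||y|+2h}(y,x)$). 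The crucial point where anti-linearity enters is the factor $(e^{\ii\pi}z^{-2})^{L_0}$ inside $Y$: when $\rho$ is pulled through this operator, the anti-linearity of $\rho$ conjugates the scalar $e^{\ii\pi h}$, and one must track that $\overline{e^{\ii\pi h}} = e^{-\ii\pi h}$ reassembles correctly with the phase appearing in $(x,y)$ so that everything collapses to $\overline{(y,x)}$.

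The main obstacle I anticipate is precisely the careful bookkeeping of these phases, Grassmann signs, and the $(e^{\ii\pi}z^{-2})^{L_0}$ factor: one must be sure that the anti-linearity of $\rho$ is applied to all and only the scalars that become complex-conjugated, that the sign $(-1)^{|x||y|}$ from the conjugation axiom exactly cancels (or combines with $(-1)^{|x||y|+2h}$) against the sign from the invariance axiom, and that the half-integral weights do not introduce a spurious sign. Aside from this, the argument is a routine manipulation: skew-symmetry of the vertex algebra is used only implicitly through the structure of the invariant form, and no deep input beyond the definitions of this section is required. I would organise the final write-up by first handling quasi-primary $x$ (where the simplified formula $(x,y) = \lim_{z\to\infty}(e^{\ii\pi}z^2)^{h_x}\langle 0|x(z)y(0)|0\rangle$ from Remark~\ref{rmk:strongpairing} makes the conjugation manifest) and then reducing the general case to that one, or alternatively working directly with the residue formula throughout.
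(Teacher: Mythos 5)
Your plan is correct and matches the paper's (unwritten) argument: the paper offers no proof beyond the remark that the identity is a straightforward application of skew-symmetry and the definition of a conjugation, and your outline identifies exactly those ingredients --- the conjugation axioms, the invariance axiom, the conjugated phase $\overline{e^{\ii\pi h}} = e^{-\ii\pi h}$ coming from $(e^{\ii\pi}z^{-2})^{L_0}$, and the symmetry $(x,y)=(-1)^{|x||y|+2h}(y,x)$ which accounts for the swap of arguments on the right-hand side. The bookkeeping you defer does close up: for quasi-primary $x$ of weight $h$ one finds $(\rho(x),\rho(y)) = e^{2\ii\pi h}(-1)^{|x||y|}\,\overline{(x,y)} = \overline{(y,x)}$, and the general case follows by descending to $L_{-1}$-descendants using $\rho\circ L_{\pm1}=L_{\pm1}\circ\rho$.
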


With this lemma in mind, we make the following definition.

\begin{dfn}
    Let $\CV$ be a M\"{o}bius vertex algebra equipped with a nondegenerate invariant bilinear form $(-,-)$ and let $\CF_\bullet$ be a nondegenerate half-integral filtration of $(\CV, (-,-))$. A \defterm{quaternionic structure} on $(\CV, (-,-), \CF_\bullet)$ is a conjugation $\rho$ further satisfying
    \[
        \rho \circ \rho = s ~, \qquad \rho \circ \sigma = \sigma^{-1} \circ \rho~.
    \]
\end{dfn}

\begin{rmk}
    By the filtration-preserving property of $\rho$, the condition $\rho\circ\rho=s$ implies that $\rho$ preserves graded subspaces $\CV_{h,p}$. It follows that $\rho\circ\sigma=\sigma^{-1}\circ\rho$ automatically.
\end{rmk}

\begin{cor}\label{cor:Hermitianform}
    Let $\CV$ be a M\"{o}bius vertex algebra equipped with a nondegenerate invariant bilinear form $(-,-)$, $\CF_\bullet$ a nondegenerate half-integral filtration of $(\CV, (-,-))$, and $\rho$ a quaternionic structure on $(\CV, (-,-), \CF_\bullet)$. The sesquilinear form defined by
    \[
        \langle x | y \rangle \coloneqq ((\sigma \circ \rho)(x), y)~,
    \]
    is Hermitian,
    \[
        \langle x | y \rangle = \overline{\langle y|x\rangle}~.
    \]
\end{cor}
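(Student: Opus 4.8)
The plan is to reduce everything to a single graded component $\CV_{h,p}$ of the refined bigrading $\CV = \bigoplus_{h,p}\CV_{h,p}$ and then carry out a short direct computation using the Lemma above together with the structural relations $\rho\circ\rho = s$ and $\rho\circ\sigma = \sigma^{-1}\circ\rho$.

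First I would use the (anti-)linearity of $\langle-|-\rangle$ in each slot to reduce to the case of $x \in \CV_{h,p}$ and $y \in \CV_{h',p'}$ homogeneous with respect to the bigrading. Since $\rho$ preserves each $\CV_{h,p}$ (as noted in the remark following the definition of a quaternionic structure) and $\sigma$ acts on $\CV_{h,p}$ as the scalar $\ii^{2p}$, the vector $(\sigma\circ\rho)(x)$ lies in $\CV_{h,p}$; because the bilinear form pairs distinct graded components trivially, both $\langle x | y\rangle = ((\sigma\rho)(x),y)$ and $\langle y|x\rangle = ((\sigma\rho)(y),x)$ --- hence also $\overline{\langle y|x\rangle}$ --- vanish unless $(h,p) = (h',p')$. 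So it suffices to treat $x,y \in \CV_{h,p}$ for a fixed pair $(h,p)$.

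On $\CV_{h,p}$ one has $\sigma = \ii^{2p}\,\id$, hence $s = \sigma\circ\sigma = (-1)^{2p}\,\id$ there; combined with $\rho\circ\rho = s$ this gives $\rho^{-1} = (-1)^{2p}\rho$ on $\CV_{h,p}$. The computation then runs as follows. Set $w \coloneqq \rho^{-1}(y) = (-1)^{2p}\rho(y) \in \CV_{h,p}$, so that $y = \rho(w)$. Since $\sigma$ is scalar on $\CV_{h,p}$ we have $\langle x|y\rangle = ((\sigma\circ\rho)(x),y) = \ii^{2p}(\rho(x),y) = \ii^{2p}(\rho(x),\rho(w))$, and now the Lemma $(\rho(u),\rho(v)) = \overline{(v,u)}$ yields
\[
    \langle x | y\rangle = \ii^{2p}\,\bigl(\rho(x),\,\rho(w)\bigr) = \ii^{2p}\,\overline{(w,x)} = \ii^{2p}(-1)^{2p}\,\overline{(\rho(y),x)}~,
\]
where in the last step I used that $(-1)^{2p}$ is real. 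Exchanging the roles of $x$ and $y$ gives $\langle y|x\rangle = \ii^{2p}(\rho(y),x)$, so $\overline{\langle y|x\rangle} = \ii^{-2p}\,\overline{(\rho(y),x)}$; and since $p \in \frac{1}{2}\Z$ we have $(-1)^{4p} = 1$, i.e. $\ii^{2p}(-1)^{2p} = \ii^{-2p}$, whence $\langle x|y\rangle = \overline{\langle y|x\rangle}$.

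There is no genuinely hard step here: the only ingredient of substance is the Lemma relating $\rho$ to the bilinear form, and the main thing to watch is the bookkeeping of the order-four phases $\ii^{2p}$ alongside the anti-linearity of $\rho$ --- in particular that $\sigma$, being a scalar on each $\CV_{h,p}$, passes freely out of the form, whereas $\rho$ must instead be absorbed by rewriting $y$ as $\rho(w)$ before the Lemma applies. Everything else is formal.
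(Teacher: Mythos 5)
Your proof is correct and is in substance the same as the paper's: both arguments are short direct computations whose only real ingredient is the Lemma $(\rho(u),\rho(v))=\overline{(v,u)}$ combined with the quaternionic-structure relations $\rho\circ\rho=s$ and $\rho\circ\sigma=\sigma^{-1}\circ\rho$. The paper manipulates $\sigma$ and $\rho$ as operators (using $(\sigma(x),y)=(x,\sigma(y))$ and $\rho^{-1}\circ\sigma=\sigma\circ\rho$) rather than restricting to a graded piece where $\sigma$ is the scalar $\ii^{2p}$, but this is only a difference of bookkeeping, not of method.
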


\begin{rmk}
    Note that we use the convention from physics that a sesquilinear form is linear in the second argument and anti-linear in the first.
\end{rmk}

\begin{proof}
    We compute:
    \[
    \begin{aligned}
        \langle x | y \rangle & = ((\sigma \circ \rho)(x), y) = (\rho(x), \sigma(y)) = \overline{((\rho^{-1}\circ \sigma)(y), x)}\\
        & = \overline{((\sigma \circ \rho)(y),x)} = \overline{\langle y | x \rangle} ~.
    \end{aligned}
    \]
\end{proof}

We are now in a position to define graded-unitary vertex algebras (\emph{cf.} Def. 2.2 of \cite{ArabiArdehali:2025fad}), as well as a slightly weakened version thereof.

\begin{dfn}[]
    A \defterm{weak graded-unitary vertex algebra} is a quadruple ($\CV$, $(-,-)$, $\mathfrak{R}_\bullet$, $\rho$), where
    \begin{enumerate}
        \item[(i)] $\CV = \bigoplus_{h, \alpha} \CV^\alpha_{h}$ is a M\"{o}bius vertex algebra with additional $\Z_2$ grading and associated $\Z_2$ parity operation $s$,
        \item[(ii)] $(-,-)$ is a nondegenerate invariant bilinear form on $\CV$,
        \item[(iii)] $\mathfrak{R}_\bullet$ is a nondegenerate half-integral filtration of $(\CV, (-,-))$ with respect to $s$,
        \item[(iv)] and $\rho$ is a quaternionic structure on $(\CV, (-,-), \mathfrak{R}_\bullet)$,
    \end{enumerate}
    such that the Hermitian form defined in Corollary \ref{cor:Hermitianform} is positive-definite. We say that this quadruple is a \defterm{graded-unitary vertex algebra} if, in addition, $\mathfrak{R}_\bullet$ is good.
\end{dfn}

\begin{rmk}
    Following \cite{ArabiArdehali:2025fad}, for a weak graded-unitary vertex algebras we denote the nondegenerate half-integral filtration by $\mathfrak{R}_\bullet$ and refer to it as the $\mathfrak{R}$-filtration due to its four-dimensional origin of the filtration in the SCFT/VOA correspondence. When refining to a grading, we use $R$ rather than $p$.
\end{rmk}

\begin{physrmk}
    Corollary \ref{cor:simple} implies that a weak graded-unitary vertex algebra of CFT type is necessarily simple. In particular, all VOAs appearing in the SCFT/VOA correspondence are simple. This is a result that is well known to experts, but we do not believe an explicit proof has appeared in the literature.
\end{physrmk}

An immediate consequence of weak graded unitarity is an analogue of the spin-statistics relation, which relates the Grassmann parity to spin (half-integrality of conformal weight) and $\mathfrak{R}$-parity.

\begin{prop}[Graded-Unitary Spin-Statistics]\label{prop:spinstatistics}
    Let $(\CV$, $(-,-)$, $\mathfrak{R}_\bullet$, $\rho)$ be a weak graded-unitary vertex algebra. Then the Grassmann parity of any nonzero element $x \in \CV_{h, R}$ satisfies
    \[
        (-1)^{|x|} = (-1)^{2(h+R)}~.
    \]
\end{prop}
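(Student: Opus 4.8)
The plan is to play the positive-definiteness of the Hermitian form against the symmetry of the underlying invariant bilinear form, applied to a single vector and to its image under the anti-linear operator $J \coloneqq \sigma\circ\rho$. First I would record the basic properties of $J$ on a graded piece. Since $\rho$ preserves every graded subspace $\CV_{h,R}$ (the Remark following the definition of a quaternionic structure) and $\sigma$ acts on $\CV_{h,R}$ by the scalar $\ii^{2R}$, the operator $J$ restricts to an anti-linear bijection of $\CV_{h,R}$; moreover $\rho$, and hence $J$, preserves Grassmann parity. Using $\rho\circ\sigma = \sigma^{-1}\circ\rho$ together with $\rho\circ\rho = s$ gives $J^2 = \sigma\rho\sigma\rho = \sigma\sigma^{-1}\rho^2 = s$, and on $\CV_{h,R}$ the parity operator $s$ acts by the scalar $(-1)^{2R}$ (it is $+\id$ on $\CV^0$, where $R\in\Z$, and $-\id$ on $\CV^{\frac{1}{2}}$, where $R\in\tfrac{1}{2}+\Z$). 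So if $x\in\CV_{h,R}$ is nonzero and Grassmann-homogeneous, then $Jx\in\CV_{h,R}$ is again nonzero with $|Jx|=|x|$.

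Next I would evaluate $\langle-|-\rangle$ on the two vectors $x$ and $Jx$ in two ways. Straight from the definition $\langle u|v\rangle = (Ju,v)$ one has $\langle x|x\rangle = (Jx,x)$ and $\langle Jx|Jx\rangle = (J^2 x, Jx) = (-1)^{2R}(x,Jx)$. On the other hand, since $x$ and $Jx$ are both homogeneous of conformal weight $h$ and Grassmann parity $|x|$, the symmetry $(u,v) = (-1)^{|u||v|+2h}(v,u)$ of the invariant bilinear form recorded after its definition gives $(x,Jx) = (-1)^{|x|+2h}(Jx,x)$. Combining these,
\[
\langle Jx|Jx\rangle = (-1)^{2R+|x|+2h}\,\langle x|x\rangle~.
\]

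Finally I invoke positive-definiteness: $x$ and $Jx$ are both nonzero, so $\langle x|x\rangle > 0$ and $\langle Jx|Jx\rangle > 0$, which forces the scalar $(-1)^{2R+|x|+2h}$ to equal $+1$, i.e. $|x| + 2h + 2R$ is even. That is exactly $(-1)^{|x|} = (-1)^{2(h+R)}$. (As a byproduct this also shows that each $\CV_{h,R}$ is concentrated in a single Grassmann parity.)

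The argument is short, so I do not anticipate a genuine conceptual obstacle; the only thing requiring care is the sign bookkeeping—tracking the anti-linearity of $\sigma$ and $\rho$, the scalar $\ii^{2R}$ by which $\sigma$ acts on $\CV_{h,R}$, and the Grassmann signs in the symmetry of $(-,-)$. The one preliminary fact worth isolating is that a conjugation preserves Grassmann parity, which is needed both for $|Jx|=|x|$ and in order to legitimately apply the bilinear-form symmetry relation within $\CV_{h,R}$; this can be deduced by applying $\rho$ twice to $\rho(x_n y) = (-1)^{|x||y|}\rho(x)_n\rho(y)$ and matching the outcome with $\rho^2 = s$.
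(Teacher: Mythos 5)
Your proof is correct and is essentially the paper's own argument: the paper computes $\langle \rho(x)\,|\,\rho(x)\rangle = (-1)^{2R+|x|+2h}\langle x\,|\,x\rangle$ and invokes positive-definiteness, which is exactly your computation with $Jx=\sigma(\rho(x))=\ii^{2R}\rho(x)$ in place of $\rho(x)$ (a difference by a unit scalar that does not affect the norms). Your explicit remarks that $\rho$ preserves Grassmann parity and that $Jx\neq 0$ are points the paper uses implicitly, so nothing of substance differs.
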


\begin{proof}
    Choose $x \in \CV_{h, R}$ nonzero. We compute:
    \[
    \begin{aligned}
        \langle \rho(x) | \rho(x) \rangle & = \ii^{2R} (\rho^2(x), \rho(x))\\
        & = \ii^{2R} (-1)^{2R} (x, \rho(x))\\
        & = \ii^{2R} (-1)^{2R} (-1)^{|x||x| + 2h} (\rho(x), x)\\
        & = (-1)^{2R + |x| + 2h} \langle x | x \rangle~.
    \end{aligned}
    \]
    As $\langle - | - \rangle$ is positive-definite, both $\langle \rho(x) | \rho(x) \rangle$ and $\langle x | x\rangle$ are positive and hence
    \[
        1 = (-1)^{2R + |x| + 2h}~,
    \]
    as desired.
\end{proof}

\begin{rmk}
    The vector space underlying a weak graded-unitary vertex algebra is naturally a $\frac12\N\times\frac12\N$-graded complex Hilbert space,
    \[
        \CV = \bigoplus_{\substack{h \in \scriptstyle{\frac{1}{2}} \N\\ R \in \scriptstyle{\frac{1}{2}}\N}} \CV_{h, R}~,
    \]
    with finite-dimensional graded components such that the inner product $\langle x | y \rangle$ of two homogeneous elements vanishes if $(h_x, R_x) \neq (h_y, R_y)$. We note that for a quasi-primary $x \in \CV$ of conformal weight $h_x$, the adjoint of $x_n$ with respect to $\langle-|-\rangle$ is given by
    \[
		(x_n)^\dagger = e^{\ii \pi h_x} \sigma^{-1} \circ \rho \circ x_{2 h_x - 2 - n} \circ \rho^{-1} \circ \sigma~.
    \]
    This can be seen by the following computation:
    \[
    \begin{aligned}
        \langle x(z) u | v\rangle & = ((\sigma \circ \rho)(x(z) u), v) = (\rho(x)(z) \rho (u), \sigma (v))\\
        & = (e^{\ii \pi} z^{-2})^{h_x} (\rho(u), \rho(x)(z^{-1}) \sigma(v))\\
        & = (e^{\ii \pi} z^{-2})^{h_x} \langle u | (\sigma^{-1} \circ \rho) \circ x(z^{-1}) \circ (\rho^{-1} \circ \sigma) v\rangle~.
    \end{aligned}
    \]
    where $u, v$ are arbitrary vectors in $\CV$. The map
    \[
        x(z) \to (e^{\ii \pi} z^{-2})^{h_x}(\sigma^{-1} \circ \rho) \circ x(z^{-1}) \circ (\rho^{-1} \circ \sigma)
    \]
    captures the physical Hermitian conjugation in radial quantization.
\end{rmk}

\begin{rmk}\label{rmk:Zgrading}
    Many weak graded-unitary vertex algebras of interest come equipped with an internal $\Z$ grading by fermion number that refines Grassmann parity. The vector space underlying such a weak graded-unitary vertex algebra is then a $\frac12\N\times\frac12\N\times\Z$-graded complex Hilbert space,
    \[
        \CV = \bigoplus_{\substack{h \in \scriptstyle{\frac{1}{2}} \N\\ R \in \scriptstyle{\frac{1}{2}}\N\\ d\in \Z}} \CV_{h, R, d}~,
    \]
    with the pairing $\langle x | y\rangle$ vanishing unless $(h_x, R_x, d_x) = (h_y, R_y, -d_y)$. In situations arising from physics, there may be other constraints that should be imposed on the internal $\Z$ grading, see \emph{e.g.} Definition \ref{dfn:HLOps}.
\end{rmk}

\begin{physrmk}\label{physrmk:Zgrading}
    In the context of vertex algebras coming from the SCFT/VOA correspondence, the $\Z$ grading refining Grassmann parity can also be identified with (twice) the ${\rm U}(1)_r$ charge $d = 2r$. In light of Graded-Unitary Spin-Statistics we see that the $\Z_2$ parity operator can then be expressed in terms of $d$ and $h$ according to $s|_{\CV_{h,d}} = (-1)^{2h+d} \id_{\CV_{h,d}}$, mirroring the fact that $R \equiv h + r \mod \Z$ for Schur operators, \emph{cf.} Section 2.2 of \cite{ArabiArdehali:2025fad}. An additional constraint comes from a BPS bound of the four-dimensional SCFT, which requires that $R + |r| \leqslant h$.
\end{physrmk}

\subsection{\label{sec:examples}Examples}

In this subsection we present several important examples, starting with the observation that a unitary structure on a M\"{o}bius vertex algebra, as defined by, \emph{e.g.}, \cite{Kac:2020ytv} (see also \cite{DongLin2014, AiLin2017, CapriKawahigashiLongoWeiner2018, RaymondTanimotoTener2022} for related work) gives rise to a weak graded-unitary structure. We then present two examples of genuinely graded-unitary VOAs that arise from free four-dimensional $\CN=2$ SCFTs, \emph{cf.} Section 2.4 of \cite{ArabiArdehali:2025fad}: the symplectic boson VOA, coming from the theory of a free hypermultiplet in four dimensions, and the symplectic fermion VOA, coming from the theory of a free vector multiplet in four dimensions. As we will see in Section \ref{sec:Kahler}, the graded-unitary structure on the symplectic fermion VOA allows us to define a (weak) graded-unitary structure on the DG vertex algebra underlying the relative semi-infinite cohomology of a (weak) graded-unitary vertex algebra when the affine currents satisfy relatively mild and physically-motivated conditions.

\subsubsection{\label{sec:ex0}Example 0: unitary M\"{o}bius vertex algebras}

A unitary structure, as defined by \cite{Kac:2020ytv} in the case of VOAs and adapted to the M\"{o}bius case in line with \cite{CapriKawahigashiLongoWeiner2018, RaymondTanimotoTener2022}, starts with a suitably defined invariant bilinear form with respect to an anti-linear automorphism $\phi$.

\begin{dfn}[Def. 4.1 of \cite{Kac:2020ytv}]
    Let $\CV$ be a M\"{o}bius vertex algebra. An \defterm{anti-linear automorphism} of $\CV$ is an anti-linear map $\phi: \CV \to \CV$ such that
    \begin{itemize}
        \item[(i)] $\phi(x_n y) = \phi(x)_n \phi(y)$ for all $x, y \in \CV$ and $n \in \Z$,
        \item[(ii)] $\phi \circ L_m = L_m \circ \phi$ for all $m \in \{-1, 0, 1\}$.
    \end{itemize}
    Let $\phi:\CV \to \CV$ be an anti-linear involution, \emph{i.e.} an anti-linear automorphism with $\phi^2 = \id_\CV$. A Hermitian form $\langle-,-\rangle$ on $\CV$ is called $\phi$-\defterm{invariant} if
    \[
        \langle u, Y(x, z) v \rangle = \langle e^{-\scriptstyle{\frac{\ii \pi |x|}{2}}}Y( e^{z L_1} (e^{-\ii \pi}z^{-2})^{L_0} \phi(x), z^{-1}), v\rangle
    \]
    for all $u, v, x \in \CV$.
\end{dfn}

\begin{rmk}
    It is important to note that the bilinear form $(-,-)$ defined by
    \[
        (u, v) \coloneqq \begin{cases}
            \langle \phi(u), v \rangle & |u| = 0\\
            -\ii ~ \langle \phi(u), v \rangle & |u| = 1
        \end{cases} \qquad \Longleftrightarrow \qquad \langle u, v\rangle = \begin{cases}
            (\phi(u), v) & |u| = 0\\
            \ii ~ (\phi(u), v) & |u| = 1
        \end{cases}
    \]
    is invariant in the sense of Section \ref{subsec:mobius}. That the factor of $e^{-\scriptstyle{\frac{\ii \pi |x|}{2}}}$ in the definition of $\phi$-invariance becomes the factor of $(-1)^{|x||u|}$ in the ordinary definition of invariance is related to the translation between various definitions of the contragredient action (see, \emph{e.g.}, Remark 3.5 of \cite{CKM24} for a brief discussion of this translation).
\end{rmk}

\begin{dfn}[Def. 4.8 of \cite{Kac:2020ytv}]
    A \defterm{unitary M\"{o}bius vertex algebra} is a triple $(\CV, \phi, \langle -, -\rangle)$, where
    \begin{itemize}
        \item[(i)] $\CV$ is a M\"{o}bius vertex algebra $\CV$,
        \item[(ii)] $\phi$ is an anti-linear involution of $\CV$,
        \item[(iii)] and $\langle -,- \rangle$ is a $\phi$-invariant Hermitian form on $\CV$,
    \end{itemize}
    such that $\langle -, - \rangle$ is positive-definite.
\end{dfn}

Such a M\"{o}bius vertex algebra can be given the structure of a weak graded-unitary vertex algebra. As stated above, there is a natural nondegenerate invariant bilinear form $(-,-)$ arising from the Hermitian form in the unitary structure, and we take this to be the nondegenerate invariant bilinear form underlying the weak graded-unitary structure. We take the $\Z_2$ grading to be trivial, \emph{i.e.} $\CV^0 = \CV$ and $\CV^{\scriptstyle{\frac{1}{2}}} = 0$, and similarly for the half-integral filtration
\[
    \mathfrak{R}_R \CV = \begin{cases}
        0 & R < 0 ~ \text{ or } ~ R \equiv \tfrac{1}{2} \mod \Z~,\\
        \CV & R \in \N~,
    \end{cases}
\]
which is certainly not a good (half-integral) filtration but is nonetheless nondegenerate. Refining this filtration to a grading leaves $\CV$ concentrated entirely in $\mathfrak{R}$-degree $0$, and $\sigma$ is simply the identity map.

We are thus left with defining a quaternionic structure and checking that the bilinear form $\langle - | - \rangle$ as defined in Corollary \ref{cor:Hermitianform} is positive-definite. To do this, we note that we can turn an anti-linear automorphism into a conjugation by composing with a square-root of the Grassmann parity operator, so the natural candidate for the quaternionic structure is
\[
    \rho(x) \coloneqq \begin{cases}
        \phi(x) & |x| = 0~,\\
        \ii~\phi(x) & |x| = 1~.
    \end{cases}
\]
That this conjugation defines a quaternionic structure is immediate from the fact that both $s$ and $\sigma$ are trivial. Finally, positive definiteness of the Hermitian form $\langle -|- \rangle$ follows from the positive definiteness of the Hermitian form $\langle - , -\rangle$ as they are in fact identical,
\[
    \langle u | v \rangle \coloneqq ((\sigma\circ \rho)(u), v) = (\rho(u), v) = \begin{cases}
        (\phi(u), v) & |u| = 0\\
        \ii~(\phi(u), v) & |u| = 1
    \end{cases} \bigg\} = \langle u, v \rangle~.
\]

In light of Proposition \ref{prop:spinstatistics}, we recover the fact that unitary M\"{o}bius vertex algebras obey the usual spin-statistics relation.

\subsubsection{\label{sec:ex1}Example 1: symplectic bosons}

We now consider an example of a genuinely graded-unitary vertex algebra: the symplectic boson VOA. Choose a symplectic vector space $V$ over $\C$. Let $\{e_a\}$ be a basis of $V$ and let $\Omega(-,-)$ denote the symplectic form on $V$; we set $\Omega_{ab} = \Omega(e_a, e_b)$ and denote the inverse of this matrix by $\Omega^{ab}$
\[
    \Omega_{ab} \Omega^{bc} = \delta^c_a~.
\]
In most cases of interest, the underlying vector space is a pseudoreal representation $R$ of a given compact Lie group $G_c$.

The symplectic boson VOA ${\rm Sb}[V]$ associated to $V$ is freely generated by $\dim V$ Grassmann-even fields $q^a(z)$ whose OPEs are given by
\[
    q^a(z) q^b(w) = \frac{\Omega^{ab}}{z-w} + :\! q^a(z) q^b(w)\!:~.
\]
The mode expansion for these generating fields is given by
\[
    q^a(z) = \sum_{n \in \Z} q^a_{n}~z^{-n-1}~,
\]
for which the above OPEs translate to the following Lie bracket,
\[
    [q^a_{n}, q^b_{m}] = \Omega^{ab} \delta_{n+m+1,0}~.
\]
We equip ${\rm Sb}[V]$ with the following stress tensor/Virasoro field:
\[
    T(z) = \tfrac{1}{2} \Omega_{ab} :\! q^b \partial q^a\!:(z)~.
\]
This stress tensor assigns $q^a$ conformal weight $\frac{1}{2}$, and hence ${\rm Sb}[V]$ is a VOA of strong CFT type. In this case, the additional $\Z$ grading coming from four-dimensional ${\rm U}(1)_r$ symmetry is trivial, so the $\Z_2$ grading in the definition of graded unitarity is identified with the half-integrality of conformal weights.

We now equip ${\rm Sb}[V]$ with a suitable good filtration $\mathfrak{R}_\bullet$. As this comes from a free SCFT, it is actually straightforward to define the corresponding $\mathfrak{R}$-grading directly. The physically-motivated grading arises from assigning the generators $q^a$ $R=\frac{1}{2}$, corresponding to the fact that the corresponding physical fields have $R$-charge $\frac{1}{2}$. We thus grade the vector space underlying ${\rm Sb}[V]$ according to
\begin{equation}\label{eq:Sbgrading}
    {\rm Sb}[V]_{\scriptstyle{\frac{\ell}{2}}} \coloneqq
    \begin{cases}
        {\rm span}\bigg\{q^{a_1}_{-n_1 - 1} \dots q^{a_\ell}_{-n_\ell - 1}|0\rangle \bigg\} & \ell \geqslant 0\\
        0 & \ell < 0
    \end{cases}~,
\end{equation}
and the induced $\mathfrak{R}$-filtration is given by
\[
    \mathfrak{R}_{p + \alpha} {\rm Sb}[V] \coloneqq \bigoplus_{\substack{p' \in \Z \\ p' \leqslant p}} {\rm Sb}[V]_{p'+\alpha}~, \qquad p\in \Z~, \qquad \alpha = 0, \tfrac{1}{2}~,
\]
\emph{cf.} Eq. (3.41) of \cite{Beem:2019tfp}. The action of $s$ is determined by Eq. \eqref{eq:Sbgrading}: it acts as $(-1)^{2R}$ on ${\rm Sb}[V]_R$, which is the same as $(-1)^{2h}$.

Following the physical realization of the conjugation $\rho$ described in \cite{ArabiArdehali:2025fad}, we are led to the following action on the generators,
\begin{equation}\label{eq:Sbconjugation}
    \rho(q^a) = -\Omega_{ab} q^b~,
\end{equation}
which generalizes to an action on standard basis elements as follows,
\[
    \rho\bigg(q^{a_1}_{-n_1 - 1} \dots q^{a_\ell}_{-n_\ell - 1}|0\rangle\bigg) = (-1)^\ell \Omega_{a_1 b_1} \dots \Omega_{a_\ell b_\ell} q^{b_1}_{-n_1 - 1} \dots q^{b_\ell}_{-n_\ell - 1}|0\rangle~,
\]
extended to all of ${\rm Sb}[V]$ by anti-linearity. It is clear that $\rho$ preserves $\mathfrak{R}$-degree and commutes with $s$. That it satisfies
\[
    \rho (x_n y) = \rho(x)_n \rho(y)~,
\]
for any $x, y \in {\rm Sb}[V]$, is equally immediate.

\begin{prop}\label{prop:Sbgradedunitarity}
    The filtration $\mathfrak{R}_\bullet$ is a nondegenerate half-integral good filtration and $\rho$ defines a quaternionic structure. Moreover, the pairing $\langle x|x\rangle = ((\sigma \circ \rho)(x), x)$ is positive-definite, hence this data equips ${\rm Sb}[V]$ with the structure of a graded-unitary VOA.
\end{prop}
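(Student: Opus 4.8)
The plan is to verify the four defining properties of a graded-unitary vertex algebra in turn, using throughout that ${\rm Sb}[V]$ is freely generated --- indeed, since $[q^a_{-n-1},q^b_{-m-1}]=0$ for $n,m\geqslant0$, as a polynomial algebra in the modes $q^a_{-n-1}$ --- so that all the structures in play are diagonal for the generator-count grading \eqref{eq:Sbgrading}. I work in a symplectic basis, so that $\Omega^{ab}=-\Omega_{ab}$ and $\Omega_{ab}\Omega_{bc}=-\delta_{ac}$, and the main point to establish is positive-definiteness.

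\emph{The filtration is good.} Since $\mathfrak{R}_\bullet$ is defined directly from the grading \eqref{eq:Sbgrading} it is visibly increasing, exhaustive, and has $|0\rangle$ in degree $0$; only compatibility of the modes with the grading needs checking. Applying Wick's theorem to the free-field OPE $q^a(z)q^b(w)=\Omega^{ab}(z-w)^{-1}+\,:\!q^a(z)q^b(w)\!:$, the product $x_ny$ of a count-$\ell_x$ monomial $x$ with a count-$\ell_y$ monomial $y$ is a sum of monomials of count $\ell_x+\ell_y-2k$, $k\geqslant0$, giving $x_ny\in\mathfrak{R}_{p+q+\alpha+\beta}\CV^{\alpha+\beta}$. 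For the good condition one uses that a non-negative mode of a monomial is a sum of operators each containing at least one annihilation mode $q^a_m$, $m\geqslant0$, which on the monomial basis either contracts against a creation mode (lowering count by $2$) or annihilates the vacuum; hence for $n\geqslant0$ at least one contraction occurs and $x_ny$ has count at most $\ell_x+\ell_y-2$, i.e.\ $x_ny\in\mathfrak{R}_{p+q+\alpha+\beta-1}\CV^{\alpha+\beta}$. This is the free-field instance of Li's analysis of good filtrations \cite{Li2004}; cf.\ Eq.~(3.41) of \cite{Beem:2019tfp}.

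\emph{Nondegeneracy.} As recorded above, ${\rm Sb}[V]$ is of strong CFT type, so it carries the canonical invariant bilinear form of Remark~\ref{rmk:strongpairing}, whose nondegeneracy on each conformal-weight space is the standard nondegeneracy of the free-field (Wick) pairing of the symplectic boson system. Invariance of this form yields $(q^a_n)^\ast=\ii\,q^a_{-n-1}$, so evaluating $\bigl(q^{a_1}_{-n_1-1}\cdots|0\rangle,\ q^{b_1}_{-m_1-1}\cdots|0\rangle\bigr)$ by commuting the modes of the left argument past those of the right expresses it as a sum over perfect matchings of the two strings of generators, which is empty unless the monomials have equal count. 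Hence $(-,-)$ is block-diagonal for the generator-count grading, and nondegeneracy on each $\CV_h$ forces nondegeneracy on each filtered piece $\mathfrak{R}_p\CV^\alpha$.

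\emph{The quaternionic structure.} By freeness, \eqref{eq:Sbconjugation} extends uniquely to an anti-linear endomorphism $\rho$, and checking that it is a conjugation reduces to compatibility with the $n$-th products of the $q^a$: one computes that $\rho(q^a)(z)=-\Omega_{ab}q^b(z)$ closes on the same OPE (using $\Omega_{ac}\Omega^{bc}=-\delta^b_a$ and $\Omega^{ab}=-\Omega_{ab}$) and fixes the stress tensor $T=\tfrac12\Omega_{ab}:\!q^b\partial q^a\!:$. Since $\rho$ preserves generator-count it commutes with $L_0,L_{\pm1}$ and with $s$ and preserves $\mathfrak{R}_\bullet$; and $\rho^2(q^a)=\Omega_{ab}\Omega_{bc}q^c=-q^a=s(q^a)$, so $\rho^2=s$, whence $\rho\circ\sigma=\sigma^{-1}\circ\rho$ follows automatically by the remark after the definition of a quaternionic structure.

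\emph{Positive-definiteness, the main obstacle.} Here I would compute the adjoint of a generator mode with respect to $\langle-|-\rangle$ of Corollary~\ref{cor:Hermitianform}. Each generator mode is homogeneous for the $\mathfrak{R}$-grading --- creation modes $q^a_{-k}$ ($k\geqslant1$) raise $R$ by $\tfrac12$ and annihilation modes lower it by $\tfrac12$ --- and, since $q^a$ is a weight-$\tfrac12$ quasi-primary, invariance of the Hermitian form yields $(q^a_{-k})^\dagger=\Omega_{ab}\,q^b_{k-1}$; the crucial point is that the phases in $\sigma=\ii^{2R}$ are exactly what converts the indefinite-looking bilinear-form adjoint $(q^a_n)^\ast=\ii\,q^a_{-n-1}$ into this positive one, while $\rho$ supplies the reality twist pairing conjugate generators. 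These adjoint relations exhibit $\CV$ as the Fock module of the resulting oscillator algebra on the cyclic vacuum, with $\langle-|-\rangle$ the associated Fock inner product; on the span of generators one checks directly that $\langle q^a|q^b\rangle=\delta^b_a$, so decomposing $V$ into symplectic planes splits ${\rm Sb}[V]$ as a tensor product of rank-one symplectic bosons, for each of which a short computation (e.g.\ $\langle (q^1_{-1})^n|0\rangle\,|\,(q^1_{-1})^n|0\rangle\rangle=n!$) gives positive-definiteness, which then passes to the tensor product. Together with the good filtration established above, this upgrades the resulting weak graded-unitary structure on ${\rm Sb}[V]$ to a genuine graded-unitary one. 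I expect keeping the interplay of $\rho$ and the $\sigma$-phases straight, so that the Fock form comes out genuinely positive, to be the only real difficulty; the rest is routine free-field bookkeeping.
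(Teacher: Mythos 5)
Your proof is correct and follows essentially the same route as the paper's: a direct free-field computation in the Fock module, with the goodness and nondegeneracy checks reduced to Wick-contraction counting and the positivity coming from the observation that the $\ii^{2R}$ phases of $\sigma$ convert the bilinear adjoint $(q^a_n)^*=\ii\,q^a_{-n-1}$ into the standard creation/annihilation adjoint $(q^a_{-n-1})^\dagger=\Omega_{ab}q^b_n$. The paper phrases the last step as ``$(\rho(x),x)$ equals $(-\ii)^{2R}$ times a positive real number, and $\sigma$ removes the phase,'' which is the same computation you organize via the rank-one tensor factorization and Fock norms.
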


\begin{proof}
    The goodness of the filtration is straightforward. Nondegeneracy of $(-,-)$ follows by noting that $(\rho(x), x)$ is nonzero for every $x$, which follows from a direct computation. We note that for any $x \in {\rm Sb}[V]_R$, the pairing $(\rho(x), x)$ is $(-\ii)^{2R}$ times a positive real number.
	
    The grading on ${\rm Sb}[V]$ that comes from refining $\mathfrak{R}_\bullet$ by Gram--Schmidt is precisely the grading given in the first instance in Eq. \eqref{eq:Sbgrading}. In particular, we find that $\sigma$ acts as multiplication by $\ii^{2R}$ on ${\rm Sb}[V]_R$, so $\rho$ indeed defines a quaternionic structure on ${\rm Sb}[V]$. Finally, the action of $\sigma$ precisely removes the factor of $(-\ii)^{2R}$ in the pairing $(\rho(x), x)$ and hence the Hermitian form $\langle - | - \rangle$ is positive-definite.
\end{proof}

\begin{rmk}
    We note that the adjoint operation on the mode $q^a_{-n-1}$ takes the form
    \[
        (q^a_{-n-1})^\dagger =
        \begin{cases}
            \Omega_{ab} q^b_{n} & n \geqslant 0\\
            -\Omega_{ab} q^b_{n} & n < 0
	\end{cases}~.
    \]
    In particular, viewing $q^a_{-n-1}$ with $n \geqslant 0$ as a creation operator, this simply exchanges creation and annihilation operators.
\end{rmk}

\subsubsection{\label{sec:ex2}Example 2: symplectic fermion VOA}

We next consider the symplectic fermion VOA, certain instances of which arises in the SCFT/VOA correspondence from free vector multiplets. Let $V$ again be a symplectic vector space; we will use the same notation as in the previous example. The symplectic fermion VOA ${\rm Sf}[V]$ is freely generated by $\dim V$ Grassmann-odd fields $\eta^a(z)$ with OPEs given by
\[
    \eta^a(z) \eta^b(w) = \frac{\Omega^{ab}}{(z-w)^2} + :\!\eta^a(z) \eta^b(w)\!:~.
\]
The mode expansion for these generating fields is expressed as
\[
    \eta^a(z) = \sum_{n \in \Z} \eta^a_{n}~z^{-n-1}~,
\]
from which the above OPEs translate to the following Lie algebra of modes%
\footnote{We use the notation that the bracket $[x,y]$ corresponds to the graded commutator; it is a commutator if at least one of $x$ or $y$ is bosonic and an anticommutator otherwise.}%
%
\[
    [\eta^a_n, \eta^b_m] = n \Omega^{ab} \delta_{n+m,0}~.
\]
${\rm Sf}[V]$ can be equipped with the stress tensor/Virasoro field
\[
    T(z) = \tfrac{1}{2}\Omega_{ab} :\! \eta^b \eta^a\!:(z)
\]
making it a VOA of strong CFT type.

The graded-unitary structure for this case can be treated quite similarly to the one for ${\rm Sb}[V]$ so we will be brief. As for symplectic bosons, the physical $\mathfrak{R}$-filtration is induced by placing the generators $\eta^a$ in $\mathfrak{R}$-degree $\frac{1}{2}$, corresponding to the $R$-charge assignments of the relevant gaugino operators in four dimensions. The underlying vector space in this case can again be graded from the get-go,
\begin{equation}\label{eq:Sfgrading}
    {\rm Sf}[V]_{\scriptstyle{\frac{\ell}{2}}} \coloneqq
    \begin{cases}
        {\rm span}\bigg\{\eta^{a_1}_{-n_1-1} \dots \eta^{a_\ell}_{-n_\ell-1}|0\rangle \bigg\}~,&\quad \ell \geqslant 0\\
	0~, & \quad \ell < 0
    \end{cases}~.
\end{equation}
and the induced $\mathfrak{R}$-filtration is given by
\begin{equation*}
    \mathfrak{R}_{p + \alpha} {\rm Sf}[V] \coloneqq \bigoplus_{\substack{p' \in \Z \\ p' \leqslant p}} {\rm Sf}[V]_{p'+\alpha}~, \qquad p\in \Z~, \quad \alpha \in\{0, \tfrac{1}{2}\}~.
\end{equation*}
The physical conjugation acts on the generating fields as
\begin{equation}\label{eq:Sfconjugation}
    \rho(\eta^a) = -\ii~\Omega_{ab}\eta^{a}~,
\end{equation}
and the action of $\sigma$ and $s$ is determined by Eq. \eqref{eq:Sfgrading}: $\sigma$ (resp. $s$) acts as $\ii^{2R}$ (resp. $(-1)^{2R}$) on ${\rm Sf}[V]_R$. By an analysis that proceeds along essentially the same lines as for symplectic bosons, we find the following.

\begin{prop}\label{prop:Sfgradedunitarity}
    This data equips ${\rm Sf}[V]$ with the structure of a graded-unitary VOA.
\end{prop}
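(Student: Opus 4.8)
The plan is to mirror the argument given for symplectic bosons (Proposition \ref{prop:Sbgradedunitarity}), tracking carefully the few places where the Grassmann-odd statistics, the quadratic pole in the $\eta$-OPE, and the extra factor of $\ii$ in \eqref{eq:Sfconjugation} alter the bookkeeping. First I would check that $\mathfrak{R}_\bullet$ is a nondegenerate half-integral good filtration: goodness is immediate from the mode formula $[\eta^a_n,\eta^b_m]=n\Omega^{ab}\delta_{n+m,0}$, since a product $\eta^a_n\,y$ with $n\geqslant 0$ either lowers the number of generators (when the commutator term survives) or leaves the $\mathfrak{R}$-degree unchanged while the normal-ordered piece sits in the correct filtered component; exhaustiveness and the vacuum condition $|0\rangle\in\mathfrak{R}_0$ are clear from \eqref{eq:Sfgrading}. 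Nondegeneracy of the bilinear form $(-,-)$ on each filtered piece follows, exactly as for ${\rm Sb}[V]$, from computing $(\rho(x),x)$ on a standard basis monomial $x=\eta^{a_1}_{-n_1-1}\cdots\eta^{a_\ell}_{-n_\ell-1}|0\rangle$ and observing it is $(-\ii)^{2R}$ (here $2R=\ell$) times a strictly positive real number — this uses $\Omega_{ab}\Omega^{bc}=\delta^c_a$ together with the fact that $\eta^a$ is quasi-primary of weight one (so the quadratic pole is the relevant one in the pairing) and that the modes $\eta^a_{-n-1}$ with $n\geqslant 0$ are creation operators pairing nontrivially with their annihilation counterparts.

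Next I would verify that $\rho$ as defined by \eqref{eq:Sfconjugation} is a conjugation: anti-linearity is built in, $\rho$ manifestly preserves each $\mathfrak{R}_R$-component and commutes with $s$; commutation with $\partial=L_{-1}$ and with $L_0,L_1$ follows because $\rho$ is defined on generators by a linear substitution with constant coefficients and the $\eta^a$ all have the same conformal weight. The defining identity $\rho(x_n y)=(-1)^{|x||y|}\rho(x)_n\rho(y)$ reduces, by the reconstruction theorem, to checking compatibility with the OPE $\eta^a(z)\eta^b(w)$, and the sign $(-1)^{|x||y|}$ is precisely what is needed for the Grassmann-odd generators: applying $\rho$ to $\eta^a(z)\eta^b(w)=\Omega^{ab}(z-w)^{-2}+\norm{\eta^a(z)\eta^b(w)}$ and to $\rho(\eta^a)(z)\rho(\eta^b)(w)=(-\ii)^2\Omega_{ac}\Omega_{bd}\eta^c(z)\eta^d(w)=-\Omega_{ac}\Omega_{bd}(\cdots)$ one sees the singular terms match after contracting indices and using the antisymmetry of $\Omega$. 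Then $\rho\circ\rho(\eta^a)=(-\ii)\cdot\overline{(-\ii)}\,\Omega_{ab}\Omega^{bc}\eta^c=(-\ii)(\ii)\eta^a=\eta^a$ on generators of $\mathfrak{R}$-degree $\tfrac12$; more generally on a degree-$R$ monomial one gets $\rho^2=(-\ii\cdot\ii)^{2R}\cdot(-1)^{\binom{2R}{2}\cdot 2}=\ldots=(-1)^{2R}=s$, so $\rho^2=s$, and since $\rho$ preserves graded subspaces $\rho\circ\sigma=\sigma^{-1}\circ\rho$ holds automatically (Remark following the definition of quaternionic structure). Hence $\rho$ is a quaternionic structure on $({\rm Sf}[V],(-,-),\mathfrak{R}_\bullet)$.

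Finally, positive-definiteness of $\langle x|y\rangle=((\sigma\circ\rho)(x),y)$: since $\sigma$ acts as $\ii^{2R}$ on ${\rm Sf}[V]_R$ and the Gram--Schmidt refinement of $\mathfrak{R}_\bullet$ returns precisely the grading \eqref{eq:Sfgrading} (the standard monomials are already mutually orthogonal), we have $\langle x|x\rangle=\ii^{2R}(\rho(x),x)=\ii^{2R}\cdot(-\ii)^{2R}\cdot(\text{positive real})=(\text{positive real})$ for every nonzero homogeneous $x$. By Corollary \ref{cor:Hermitianform} the form is Hermitian, so this suffices. The one step requiring genuine care — and the main obstacle — is the OPE-compatibility check for $\rho$: one must confirm that the $(-1)^{|x||y|}$ sign in the conjugation axiom, the factor $(-\ii)^2=-1$ produced by \eqref{eq:Sfconjugation}, and the antisymmetry of $\Omega^{ab}$ (forced by the \emph{quadratic}, hence symmetric-looking but fermionic-antisymmetrized, pole) all conspire correctly; once this is pinned down on the generating fields the rest propagates by the same formal manipulations as in the ${\rm Sb}[V]$ case.
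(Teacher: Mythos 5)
Your overall strategy is exactly the paper's: the paper gives no details for Proposition \ref{prop:Sfgradedunitarity} beyond the remark that ``the analysis proceeds along essentially the same lines as for symplectic bosons,'' and your proposal is a faithful expansion of that analogy. The goodness and nondegeneracy arguments, the identification of $(\rho(x),x)$ as $(-\ii)^{2R}$ times a positive real (so that $\sigma=\ii^{2R}$ cancels the phase), and the OPE-compatibility check with the $(-1)^{|x||y|}$ sign are all correct and in line with the proof of Proposition \ref{prop:Sbgradedunitarity}.

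There is, however, a concrete error in your verification of $\rho\circ\rho=s$, which is the one axiom that genuinely differs from the bosonic case. You compute
$\rho^2(\eta^a)=(-\ii)\,\overline{(-\ii)}\,\Omega_{ab}\Omega^{bc}\eta^c=\eta^a$,
but applying $\rho$ twice produces \emph{two lowered-index} copies of the symplectic form, not a contraction with the inverse: since $\rho$ is anti-linear,
$\rho^2(\eta^a)=\overline{(-\ii)\Omega_{ab}}\cdot(-\ii)\Omega_{bc}\,\eta^c=\overline{\Omega_{ab}}\,\Omega_{bc}\,\eta^c$,
and the identity you need is $\overline{\Omega_{ab}}\,\Omega_{bc}=-\delta_{ac}$ (valid in a real Darboux basis, where $\Omega^{ab}=-\Omega_{ab}$; the same normalization is implicitly used for ${\rm Sb}[V]$, since $\rho(q^a)=-\Omega_{ab}q^b$ also requires $\overline{\Omega}\,\Omega=-\mathbbm{1}$ to give $\rho^2=s$ there). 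With this, $\rho^2(\eta^a)=-\eta^a=s(\eta^a)$, whereas your displayed answer $+\eta^a$ would contradict $\rho^2=s$ on the degree-$\tfrac12$ generators. The same omission propagates to your general formula $(-\ii\cdot\ii)^{2R}(-1)^{2\binom{2R}{2}}$, which evaluates to $1$ rather than $(-1)^{2R}$; the missing factor $(-1)^{\ell}$ comes precisely from the $\ell$ contractions $\overline{\Omega_{a_ib_i}}\,\Omega_{b_ic_i}=-\delta_{a_ic_i}$ on an $\ell$\=/fold monomial. Once this is corrected the argument closes as you intend.
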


\begin{rmk}
    The adjoint of $\eta^a_{-n-1}$ takes the form
    \[
        (\eta^{a}_{-n-1})^\dagger =
        \begin{cases}
            \Omega_{ab} \eta^{b}_{n+1}& n \geqslant 0\\
            -\Omega_{ab} \eta^{b}_{n+1}& n < 0
	\end{cases}~.
    \]
    Note that $\eta^{a}_0$ acts as zero on the entire vacuum module.
\end{rmk}

\begin{rmk}
    For a general choice of $V$ there is no natural, nontrivial internal $\Z$ grading on ${\rm Sf}[V]$ that refines the $\mathbb{Z}_2$ Grassmann parity. In the cases of interest, the vector space $V$ takes the form $\C^2 \otimes \fg$ where $\fg = \fg_c \otimes_\R \C$ is the complexification of the Lie algebra $\fg_c = {\rm Lie}(G_c)$ of a compact Lie group $G_c$; the symplectic form is simply the tensor product of the standard symplectic form on $\C^2$ and a preferred symmetric nondegenerate invariant bilinear form on $\fg$, \emph{e.g.} the Killing form when $\fg$ is semisimple. In this special case, the vector space can be split as $\C^2 \otimes \fg = \fg_+ \oplus \fg_-$ where $\fg_\pm$ are given internal weights $d = \pm 1$, corresponding to the ${\rm U}(1)_r$ charges $\pm \frac{1}{2}$ of the corresponding gaugino operators in four dimensions.

    More generally, one may take $V = \C^2 \otimes U = U_+ \oplus U_-$ where $U$ is the complexification of a real inner product space, with symplectic form determined by the standard symplectic form on $\C^2$ and the metric on $U$. If $\{T_A\}_{A=1}^{\dim U}$ is a basis for $U$ then we denote the generating fields as $\eta^{\alpha, A}$, where $\alpha = \pm$, with OPE given by
    \[
        \eta^{\alpha, A}(z) \eta^{\beta, B}(w) = \frac{\epsilon^{\alpha \beta} K^{AB}}{(z-w)^2} + :\!\eta^{\alpha, A}(z) \eta^{\beta, B}(w)\!:~,
    \]
    where $\epsilon^{\alpha \beta}$ is the two-index Levi--Civita tensor, defined with $\epsilon^{+-} = 1$, and $K^{AB}$ are the matrix elements of the inverse of the metric in the basis $\{T_A\}$. The generators $\eta^{\pm, A}$ can be given internal degree $\pm 1$, although there may be more flexibility depending on the form of the metric.
\end{rmk}

\begin{rmk}\label{rmk:sl2}
    The symplectic fermion VOA $\rm{Sf}[V]$ inherits a natural action of ${\rm Sp}(V)$ by \emph{outer} automorphisms. In the cases of interest to four-dimensional SCFTs, where we take $V = \C^2 \otimes \fg$, it is natural to consider the subgroup ${\rm Sp}(2,\C) \otimes G$, where $G$ acts on $\fg$ via the adjoint action. The action of this reduced structure group will play an important role in our analysis; the action of $G$ will be used to define relative semi-infinite cochain complexes whereas ${\rm Sp}(2,\C)$ will act non-trivially on the differential of this complex. Only the intersection of these automorphisms with the unitary group ${\rm U}(V)$ will be compatible with the Hermitian form $\langle - | - \rangle$.
    
    The generators of the ${\rm Sp}(2, \C)$ action on the vacuum module ${\rm Sf}[\C^2 \otimes U]$ can be made quite explicit in terms of the modes $\eta^{\alpha, A}_n$ of the generating fields. Explicitly, the generator of the diagonal Cartan giving $\eta^{\pm,A}$ weight $\mp 1$ is given by
    \[
        \Pi = \sum_{n > 0} \frac{1}{n}K_{AB} \left(\eta^{-,A}_{-n} \eta^{+,B}_n + \eta^{+,A}_{-n} \eta^{-,B}_n\right)~,
    \]
    where $K_{AB}$ are the matrix elements of the metric on $U$. The remaining generators are given by
    \[
        L = - \sum_{n > 0} \frac{1}{n} K_{AB} \eta^{-,A}_{-n} \eta^{-,B}_n~, \qquad \Lambda = \sum_{n > 0} \frac{1}{n} K_{AB} \eta^{+,A}_{-n} \eta^{+,B}_n~.
    \]
    The adjoint of each of these endomorphisms takes the form
    \[
        \Pi^\dagger = \Pi~, \qquad L^\dagger = \Lambda~, \qquad \Lambda^\dagger = L~,
    \]
    which gives a real form corresponding to the unitary subgroup ${\rm USp}(2) \simeq {\rm SU}(2)$.
\end{rmk}

\section{\label{sec:BRST}Relative semi-infinite cohomology of graded-unitary vertex algebras}

In this section we turn to our main object of study: the relative semi-infinite cohomology of (weak) graded-unitary vertex algebras with (twice critical) affine $\mathfrak{g}$ symmetry.%
\footnote{When $\fg$ has abelian factors, we take twice critical to mean vanishing level for those factors.} %
We first review the notion of relative semi-infinite cohomology and its appearance in four-dimensional $\CN=2$ superconformal gauge theories; see Section 3.4 of \cite{Beem:2013sza} for further physical details. We then turn to structural properties that emerge in the presence of a (weak) graded unitarity structure. We assume all M\"{o}bius vertex algebras appearing in this section are of CFT type unless otherwise stated.

\subsection{\label{sec:BRSTreview}Lightning review of (relative) semi-infinite cohomology}

Choose a compact Lie group $G_c$ with Lie algebra $\fg_c = {\rm Lie}(G_c)$; we denote by $\fg \coloneqq \fg_c \otimes_\R \C$ its complexification, a complex reductive Lie algebra. Let $\{T_A\}_{A = 1}^{\dim \fg}$ be a basis of $\fg_c$ and denote by $f_{AB}^C$ the (real) structure constants of $\fg_c$ with respect to this basis
\[
	[T_A, T_B] = f^C_{AB} T_C~.
\]
We denote by $\kappa(-,-)$ a nondegenerate invariant bilinear form on $\fg_c$ that restricts to the Killing form on each of its simple factors (normalized so that long roots have squared-length two) and set $K_{AB} = \kappa(T_A, T_B)$; we denote by $K^{AB}$ its inverse
\[
	K_{AB} K^{BC} = \delta^C_A~.
\]
We will use the notation
\[
	f_{ABC} \coloneqq f^D_{AB} K_{DC} = - f_{ACB}~.
\]
Denote by $V^k(\mathfrak{g})$ the universal affine vertex algebra associated to $\mathfrak{g}$ at level $k$. This is generated by currents $J_A(z)$ with defining OPE
\begin{equation}
    J_A(z)J_B(w) = \frac{k K_{AB}}{(z-w)^2}+\frac{f_{AB}^C J_C(w)}{z-w} + :\! J_A(z) J_B(w)\!:~.
\end{equation}
Following \cite{Arakawa:2018egx}, we make the following definition.
\begin{dfn}
    Let $\CV$ be a vertex algebra. A $\wh{\fg}_{k}$ \defterm{chiral quantum moment map} when it receives a vertex algebra homomorphism
    \[
        \mu_{\CV}:V^k(\mathfrak{g})\to\mathcal{V}~.
    \]
    We say that $\CV$ has a \defterm{Hamiltonian} $\wh{\fg}_k$ \defterm{action} if it has a $\wh{\fg}_k$ chiral quantum moment map.
\end{dfn}

\begin{rmk}
    When a chiral quantum moment map exists, we also denote by $J_A$ the images of the generating currents of $V^k(\fg)$ in $\CV$.
\end{rmk}

Let $\CV_{\rm matter}$ be a vertex algebra with a Hamiltonian $\wh{\fg}_{-2h^\vee}$ action, where $h^\vee$ is the dual Coxeter number of $\fg$.%
\footnote{If $\fg$ is not simple we take the levels to satisfy this constraint for each simple factor and to be vanishing on each abelian factor.} %
We then introduce the vertex algebra
\[
    C(\wh{\fg}_{-2h^\vee}, \CV_{\rm matter}) \coloneqq \CV_{\rm matter} \otimes \bigwedge{}^{\scriptstyle{\frac{\infty}{2}}+\bullet} \fg~,
\]
where $\bigwedge^{\scriptstyle{\frac{\infty}{2}}+\bullet} \fg$ is the vertex algebra freely generated by $2 \dim \fg$ Grassmann-odd fields $b^A$, $c^A$ with the only singular OPEs given by
\[
	c^A(z) b^B(w) = \frac{K^{AB}}{z-w} + :\! c^A(z) b^B(w)\!:~,
\]
%
also called a $bc$ ghost system valued in $\fg$. Note that $C(\wh{\fg}_{-2h^\vee}, \CV_{\rm matter})$ has a natural Hamiltonian $\wh{\fg}_0$ action whose chiral quantum moment map has its image is generated by
\[
	J^{\rm tot}_A(z) = \overset{J^{\rm gh}_A(z)}{\overbrace{f_{ABC} :\!c^B b^C\!:(z)}} + J_{A}(z) = \sum_{n \in \Z} J^{\rm tot}_{A,n} z^{-n-1}~,
\]
where $J_{A}$ generate the image of the $\wh{\fg}_{-2h^\vee}$ chiral quantum moment map for $\CV_{\rm matter}$. We can equip $C(\wh{\fg}_{-2h^\vee}, \CV_{\rm matter})$ with a differential given by the zero mode 
\[
	\bQ = \mathcal{Q}_{0}~, \qquad \mathcal{Q}(z) = :\! c^A\big(J_{A} + \tfrac{1}{2} J_{{\rm gh},A}\big)\!:(z)~.
\]

\begin{rmk}
    When $\CV_{\rm matter}$ is a VOA, we equip $C(\wh{\fg}_{-2h^\vee}, \CV_{\rm matter})$ with the following stress tensor/Virasoro field:
    \[
    	T(z) = K_{AB}:\!b^A \partial c^B\!:(z) + T_{\rm matter}(z)~,
    \]
    where $T_{\rm matter}(z)$ is the stress tensor of $\CV_{\rm matter}$. This stress tensor gives $b^A$ conformal weight one and $c^B$ conformal weight zero. More generally, if $\CV_{\rm matter}$ is a M\"{o}bius vertex algebra, then $C(\wh{\fg}_{-2h^\vee}, \CV_{\rm matter})$ can also be given the structure of a M\"{o}bius vertex algebra. In either case, $C(\wh{\fg}_{-2h^\vee}, \CV_{\rm matter})$ is thus \emph{not} of CFT type.
\end{rmk}

We now in introduce the vertex algebra of interest: it is the subcomplex of $C(\wh{\fg}_{-2h^\vee}, \CV_{\rm matter})$ relative to the action of the finite (conformal weight zero) part,
\begin{equation}\label{eq:Vdef}
    C(\wh{\fg}_{-2h^\vee}, \fg, \CV_{\rm matter}) \coloneqq \left\{x \in C(\wh{\fg}_{-2h^\vee}, \CV_{\rm matter}) \big| b^A_0 x = 0 \text{ and } J^{\rm tot}_{A, 0} x = 0 \text{ for all } A\right\}~.
\end{equation}

\begin{dfn}
    The \defterm{relative semi-infinite} $\wh{\fg}_{-2h^\vee}$\defterm{-cohomology} with coefficients in $\CV_{\rm matter}$, denoted $H^{\scriptstyle{\frac{\infty}{2}}+\bullet}(\wh{\fg}_{-2h^\vee}, \fg, \CV_{\rm matter})$, is the cohomology of $(C(\wh{\fg}_{-2h^\vee}, \fg, \CV_{\rm matter}), \bQ)$.
\end{dfn}

As this vertex algebra is the main object we will be studying, several remarks are in order.

\begin{physrmk}
    As described in \cite{Beem:2013sza}, the relative subcomplex $C(\wh{\fg}_{-2h^\vee}, \fg, \CV_{\rm matter})$ describes the algebra of Schur operators in the limit of vanishing gauge coupling. This algebra of gauge-invariant operators gets deformed by the introduction of the differential $\bQ$ as the gauge coupling is turned on. In other words, this gauging is realized at the level of vertex algebras as the relative semi-infinite $\wh{\fg}_{-2h^\vee}$-cohomology of $\CV_{\rm matter}$. Physically, the vertex algebra $\CV_{\rm matter}$ is thought of as the vertex algebra associated to the to-be-gauged $\CN=2$ SCFT; the requirement on its level translates to the fact that the gauged theory continues to be $\CN=2$ superconformal.
\end{physrmk}

\begin{rmk}
    The first condition $b^A_0 x = 0$ in the definition of the relative subcomplex can be viewed as treating $\partial c^A$ as a generator. With this in mind, the Grassmann-odd fields $b^A$ and $\partial c^A$ are on equal footing in the relative subcomplex: they both have conformal weight one and transform in the adjoint representation of $\fg$. It is then convenient to adopt notation that reflects this symmetry between them:
    \[
        \eta^{+,A} \coloneqq b^A~, \qquad \eta^{-,A} \coloneqq \partial c^A~,
    \]
    where these fields realize the symplectic fermion vertex algebra ${\rm Sf}[\C^2 \otimes \fg]$ described in Section \ref{sec:ex2}. In other words, we can identify the relative subcomplex as
    \[
        C(\wh{\fg}_{-2h^\vee}, \fg, \CV_{\rm matter}) \cong \left( \CV_{\rm matter} \otimes {\rm Sf}[\C^2 \otimes \fg] \right)^G~.
    \]
    When $\CV_{\rm matter}$ is a VOA, the (state corresponding to the) stress tensor $T(z)$ belongs to this relative subcomplex and so it is naturally a VOA as well. If $\CV_{\rm matter}$ is of (strong) CFT type, so too is $C(\wh{\fg}_{-2h^\vee}, \fg, \CV_{\rm matter})$. Moreover, this stress tensor $T(z)$ (when present) is $\bQ$-closed and not exact,so survives passage to cohomology of the complex.
\end{rmk}

\begin{prop}\label{prop:QpQm}
    The vertex algebra $C(\wh{\fg}_{-2h^\vee}, \fg, \CV_{\rm matter})$ has two commuting differentials $\bQ^\pm$, where $\bQ = \bQ^-$. Moreover, the cohomologies defined with respect to either of $\bQ^+$ and $\bQ^-$ are isomorphic.
\end{prop}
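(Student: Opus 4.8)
The plan is to produce $\bQ^+$ and all the stated relations from the outer $\fsl(2)$-action on the symplectic-fermion factor recorded in Remark~\ref{rmk:sl2}, together with the single known fact that $\bQ=\bQ^-=\mathcal{Q}_0$ is a differential. First I would observe that the triple $\{\Pi,L,\Lambda\}$ of Remark~\ref{rmk:sl2}, acting on ${\rm Sf}[\C^2\otimes\fg]$ and trivially on $\CV_{\rm matter}$, commutes with the diagonal $G$-action and with the zero modes $b^A_0$ and $J^{\rm tot}_{A,0}$ that cut out the relative subcomplex (for the latter one uses that $K_{AB}\eta^{+,A}_{-n}\eta^{+,B}_n$ and $K_{AB}\eta^{-,A}_{-n}\eta^{-,B}_n$ are $\fg$-invariant). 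Hence the $\fsl(2)$-action descends to derivations of the vertex algebra $C(\wh{\fg}_{-2h^\vee},\fg,\CV_{\rm matter})$, exponentiating to an ${\rm Sp}(2,\C)$-action by automorphisms (fixing the stress tensor when $\CV_{\rm matter}$ is a VOA, the symplectic-fermion stress tensor being built from the symplectic form on $\C^2$); this action does \emph{not} preserve the $bc$-ghost ($\Z$-cohomological) grading, as $L$ and $\Lambda$ shift it by $\pm2$.

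Second, I would restrict $\bQ^-$ to the relative subcomplex and rewrite it using only the symplectic-fermion modes $\eta^{\pm,A}_n=b^A_n,(\partial c^A)_n$ and the currents $J_{A,n}$ of $\CV_{\rm matter}$: the $c^A_0$-dependence of $\mathcal{Q}_0$ assembles into $c^A_0 J^{\rm tot}_{A,0}$, which annihilates the relative subcomplex, and $c^A_n=-\tfrac1n(\partial c^A)_n$ for $n\neq 0$. In this form $\bQ^-$ is the sum of a term linear in $\eta^-$, namely $\sum_{n\ne0}\tfrac1n\eta^{-,A}_{-n}J_{A,n}$, and a cubic term $\propto f_{ABC}\sum_{l+m+n=0}\tfrac1{lm}\eta^{-,A}_l\eta^{-,B}_m\eta^{+,C}_n$, so it is homogeneous of $\Pi$-weight $+1$. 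The one genuine computation is that $[L,\bQ^-]=0$: since $[L,\eta^{-}]=0$ and $[L,\eta^{+}]\propto\eta^{-}$, the only contribution is the cubic expression $\propto\sum_{l+m+n=0}\tfrac1{lm}f_{ABC}\eta^{-,A}_l\eta^{-,B}_m\eta^{-,C}_n$, and because $f_{ABC}\,\eta^{-,A}_l\eta^{-,B}_m\eta^{-,C}_n$ is totally symmetric under permuting the pairs $(A,l),(B,m),(C,n)$, symmetrizing the coefficient gives $\tfrac13\tfrac{l+m+n}{lmn}=0$ on the support of the sum. Thus $\bQ^-$ is a highest-weight vector, and I set $\bQ^+:=[\Lambda,\bQ^-]$ (of $\Pi$-weight $-1$); the identical symmetrization argument gives $[\Lambda,\bQ^+]=0$, so $\{\bQ^-,\bQ^+\}$ span the standard two-dimensional $\fsl(2)$-module inside ${\rm End}\,C(\wh{\fg}_{-2h^\vee},\fg,\CV_{\rm matter})$. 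In particular $\bQ^+$, being a commutator of two odd derivations, is again an odd derivation.

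Third, the bracket relations follow formally using that $\Lambda$ is an even operator, hence acts as a derivation of the graded commutator of endomorphisms. From $\{\bQ^-,\bQ^-\}=2(\bQ^-)^2=0$ one gets $0=\Lambda\{\bQ^-,\bQ^-\}=2\{[\Lambda,\bQ^-],\bQ^-\}=2\{\bQ^+,\bQ^-\}$; applying $\Lambda$ again and using $[\Lambda,\bQ^+]=0$ gives $0=\{[\Lambda,\bQ^+],\bQ^-\}+\{\bQ^+,[\Lambda,\bQ^-]\}=\{\bQ^+,\bQ^+\}=2(\bQ^+)^2$. Hence $\bQ^+$ is a differential and $\bQ^+,\bQ^-$ anticommute, with $\bQ=\bQ^-$. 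For the isomorphism of cohomologies I would invoke a lift $w\in{\rm Sp}(2,\C)$ of the Weyl reflection $\Pi\mapsto-\Pi$ (e.g.\ $w=\exp(\tfrac{\pi}{2}(\Lambda-L))$): since $\bQ^-$ is the highest-weight vector of a standard doublet, $w\bQ^- w^{-1}=\lambda\,\bQ^+$ for some $\lambda\in\C^\times$, so the vertex-algebra automorphism $w$ carries the DG vertex algebra $(C,\bQ^-)$ isomorphically onto $(C,\lambda\bQ^+)$, and $H(C,\lambda\bQ^+)=H(C,\bQ^+)$ as $\lambda\bQ^+$ and $\bQ^+$ have the same kernel and image (the isomorphism intertwines the cohomological gradings up to the sign flip $d\mapsto-d$).

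The only real obstacle is the highest-weight identity $[L,\bQ^-]=0$ of the second step; everything else is either the descent of the ${\rm Sf}$-automorphism to the relative subcomplex or formal $\fsl(2)$-representation theory. Alternatively, one may bypass the $\fsl(2)$ entirely and verify $(\bQ^+)^2=0$ and $\{\bQ^+,\bQ^-\}=0$ by a direct residue computation of the anti-BRST current against itself and against $\mathcal{Q}$, as in Section~3.4.1 of \cite{Beem:2013sza}.
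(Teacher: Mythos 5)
Your proposal is correct and follows essentially the same route as the paper: the differentials $\bQ^\pm$ form a standard doublet under the outer ${\rm Sp}(2,\C)$ acting on ${\rm Sf}[\C^2\otimes\fg]$, which forces the anticommutation relation and identifies the two cohomologies via an automorphism exchanging them. You simply spell out the steps the paper leaves implicit (descent of the $\fsl(2)$ to the relative subcomplex, the highest-weight identity $[L,\bQ^-]=0$ via the symmetrization of $\tfrac{1}{lm}+\tfrac{1}{mn}+\tfrac{1}{nl}$ on $l+m+n=0$, and deriving $(\bQ^+)^2=0$ from the $\fsl(2)$ structure rather than asserting it), all of which is consistent with the paper's explicit formulas in Eq.~\eqref{eq:Qpm}.
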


\begin{proof}
    As mentioned in Section \ref{sec:ex2}, the symplectic fermion VOA ${\rm Sf}[\C^2 \otimes \fg]$ has a ${\rm Sp}(2, \C) \times G$ outer automorphism. Whereas the second factor is used in defining the relative subcomplex, the first factor does not preserve the differential $\bQ$. Instead, this differential fits into the standard representation of ${\rm Sp}(2,\C)$. In terms of the modes of the symplectic fermions and the currents $J_A$ these differentials take the following form:
    \begin{equation}\label{eq:Qpm}
    	\bQ^\pm = \sum_{n \in \Z_{\neq 0}} \frac{1}{n} :\! \eta^{\pm,A}_{-n} J_{A,n} \!: + \sum_{\substack{m,n \in \Z_{\neq 0}\\ m \neq n}} \frac{f_{ABC}}{2mn} :\!\eta^{\pm,A}_{-n} \eta^{\pm,B}_{m} \eta^{\mp,C}_{n-m} \!:~.
    \end{equation}
    As each of the differentials $\bQ^\pm$ is square-zero and they transform in the standard representation of ${\rm Sp}(2, \C)$ we conclude that they necessarily (anti)commute with one another. As these differentials are exchanged by an automorphism of the underlying vertex algebra, their cohomologies are isomorphic to one another.
\end{proof}

\begin{physrmk}
    See \emph{e.g.} \cite[Section 3.4.1]{Beem:2013sza} for a physical discussion of this observation. Roughly speaking, the existence of these two differentials is due to the fact that the vertex algebra can be defined by two different nilpotent elements in the $\CN=2$ superconformal algebra. Although these elements do not commute in the full superconformal algebra, their anti-commutator is a sum of quantized charges which cannot change as the gauge coupling is tuned away from zero. Namely, the restrictions of these supercharges to Schur operators at zero coupling, \emph{i.e.} the operators in the vector space $\CV$, necessarily commute, and this continues to be the case away from zero coupling.
\end{physrmk}

\begin{rmk}
    If we grade $C(\wh{\fg}_{-2h^\vee}, \fg, \CV_{\rm matter})$ by $\eta^{-,A}$-degree minus $\eta^{+,A}$ degree, $\bQ^-$ will be homogeneous of degree $1$ and $\bQ^+$ will be homogeneous of degree $-1$; each will equip $\CV$ with the structure of a differential graded (DG) vertex algebra. This grading is inherited from the endomorphism $\Pi$ described in Remark \ref{rmk:sl2}, \emph{i.e.} the diagonal Cartan subalgebra of the ${\rm Sp}(2,\C)$ automorphisms of the symplectic fermions ${\rm Sf}(\C^2 \otimes \fg)$.
\end{rmk}

\begin{rmk}
    In general there are as many ${\rm Sp}(2,\C)$ automorphisms of the relative subcomplex as the number of simple factors plus the rank of the radical of $\fg$. Correspondingly, we can construct twice as many differentials; all of these differentials necessarily commute with one another. We will return to this observation in Section \ref{sec:automorphisms}.
\end{rmk}

\subsection{\label{sec:Kahler}Decompositions from graded unitarity and the K\"{a}hler package}

We now assume that $\CV_{\rm matter}$ is itself a weak graded-unitary vertex algebra. We view the vertex algebra underlying the relative subcomplex $C(\wh{\fg}_{-2h^\vee}, \fg, \CV_{\rm matter})$ as the $G$-invariant subalgebra of  $\CV_{\rm matter} \otimes {\rm Sf}[\C^2 \otimes \fg]$. The vertex algebra $\CV_{\rm matter} \otimes {\rm Sf}[\C^2 \otimes \fg]$ inherits a weak graded-unitary structure from that of $\CV_{\rm matter}$ and of ${\rm Sf}[\C^2 \otimes \fg]$ as given in Section \ref{sec:ex2}, where the internal $\Z$ grading is taken to be the diagonal; this is naturally $G$-equivariant and hence induces a weak graded-unitary structure on $\CV$. Of course, if $\CV_{\rm matter}$ is a graded-unitary vertex algebra then so too is $C(\wh{\fg}_{-2h^\vee}, \fg, \CV_{\rm matter})$.

\begin{rmk}
    The internal grading on ${\rm Sf}[\C^2 \otimes \fg]$ induces an additional $\Z$ grading on $\CV$, although it could be redundant if $\CV_{\rm matter}$ is concentrated in degree zero as in the graded-unitary structure on ${\rm Sb}[V]$ described in Section \ref{sec:ex1}; we will return to this additional grading in Section \ref{sec:automorphisms}.
\end{rmk}

Our analysis will require that the $\wh{\fg}_{-2h^\vee}$ chiral quantum moment map satisfies the following properties.

\begin{dfn}\label{dfn:goodaction}
    Let $(\CV, (-,-), \mathfrak{R}_\bullet, \rho)$ be a weak graded-unitary vertex algebra equipped with Hamiltonian $\wh{\fg}_k$ action with chiral quantum moment map $\mu: V^k(\fg) \to \CV$ whose image is generated by states $J_{A,-1}|0\rangle$, $A = 1, \dots, \dim \fg$. We say that this Hamiltonian $\wh{\fg}_k$ action is \defterm{good} if
    \begin{itemize}
        \item[(i)] $J_{A,-1}|0\rangle \in \mathfrak{R}_1 (\CV_{\rm matter})_{h = 1}$~,
        \item[(ii)] $J_{A, n}^{[1]} = 0$ for all $A = 1, \dots, \dim \fg$ and $n \geq 0$~,
        \item[(iii)] $\rho(J_A) = K^{AB} J_B$~.
    \end{itemize}
\end{dfn}

\begin{rmk}
    Condition $(i)$ implies that $J_{A, -1}|0\rangle$ belongs to the $s$-invariant subspace of $\CV_{\rm matter}$ and hence $\rho$ acts on these vectors as an anti-linear involution of $\fg$, hence is equivalent to a choice of real form $\fg_c$ thereof. The action of $\rho$ in condition $(iii)$ is chosen to be the Cartan involution and thereby select the compact real form. When the filtration is good, \emph{i.e.}, $\CV_{\rm matter}$ is graded unitary, then condition $(i)$ implies condition $(ii)$, moreover, $J_{A,-1}|0\rangle$ belongs to the graded component with $R = 1$. We note that if $\CV$ is a unitary M\"{o}bius vertex algebra with a unitary Hamiltonian $\wh{\fg}_k$ action then this action is necessarily good with respect to the weak graded-unitary structure described in Section \ref{sec:ex0}.
\end{rmk}

\begin{physrmk}
    In the context of the SCFT/VOA correspondence, the physical operators giving rise to these currents always have conformal weight $h = 1$, degree $d = 0$, and $R$-charge $R = 1$. The above action of $\rho$ is taken to match one coming from this four-dimensional origin.%
    \footnote{Our formula for the action of $\rho$ differs from the one appearing in \cite{ArabiArdehali:2025fad} as we work in the convention that there is no $\ii$ appearing in the definition of the real structure constants, \emph{i.e.}, we work with anti-Hermitian generators rather than the Hermitian generators. As $\rho$ is anti-linear, the additional factor of $\ii$ appearing in the basis accounts for the lack of a minus sign in the action on $J_A$.} %
\end{physrmk}

\begin{physrmk}
    The goodness condition on the Hamiltonian $\wh{\fg}_k$ action is satisfied by many vertex algebras of physical interest outside of four dimensions. For example, the VOAs described in \cite{Costello:2018fnz} in the context of $A$-twisted (also called $H$-twisted) three-dimensional $\CN=4$ gauge theories are given by relative semi-infinite cohomology of free-field VOAs built from symplectic bosons and free fermions. Moreover, there is a nondegenerate half-integral filtration on these VOAs coming from the ${\rm SU}(2)_H$ $R$-symmetry but the free fermions have vanishing $R$-charge, \emph{i.e.}, filtration degree, and hence this filtration is not good. Nonetheless, this Hamiltonian $\wh{\fg}_{-2h^\vee}$ action is good because the only contributions to $J_{A,n}^{[1]}$ can come from the symplectic bosons and hence vanish due to the goodness of the ${\rm SU}(2)_H$-filtration upon restricting to the symplectic bosons.
\end{physrmk}

With the above hypotheses on the Hamiltonian $\wh{\fg}_{-2h^\vee}$ action, we have the following lemma.

\begin{lem}
	Let $\CV_{\rm matter}$ be a weak graded-unitary vertex algebra with a good Hamiltonian $\wh{\fg}_{-2h^\vee}$ action. Then the differentials $\bQ^\pm$ on $C(\wh{\fg}_{-2h^\vee}, \fg, \CV_{\rm matter})$ can be written as
    \[
        \bQ^\pm = Q^\pm + S^\pm
    \]
    where $Q^\pm$ and $S^\pm$ are homogeneous of $\mathfrak{R}$-degree $R = \frac{1}{2}$ and $R = -\frac{1}{2}$, respectively. The $Q^\pm$, $S^\pm$ are all square-zero and satisfy
    \[
		[Q^\pm, S^\pm] = 0~, \qquad [Q^+, Q^-] = 0 = [S^+, S^-]~, \qquad [Q^+, S^-] = - [Q^-, S^+]~.
	\]
    Moreover, they satisfy
    \[
    	(S^\alpha)^\dagger = -\epsilon_{\alpha \beta} Q^\beta \qquad (Q^\alpha)^\dagger = \epsilon_{\alpha \beta} S^\beta~.
    \]
\end{lem}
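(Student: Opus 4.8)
The plan is to extract $Q^\pm$ and $S^\pm$ by decomposing the explicit modes in \eqref{eq:Qpm} according to $\mathfrak{R}$-degree, using the goodness hypotheses to control the degree-$\tfrac12$ versus degree-$(-\tfrac12)$ pieces. First I would recall that the symplectic fermions $\eta^{\pm,A}$ have $\mathfrak{R}$-degree $\tfrac12$ (by the grading \eqref{eq:Sfgrading} on ${\rm Sf}[\C^2\otimes\fg]$), while the currents $J_A$ have $J_{A,-1}|0\rangle\in\mathfrak{R}_1$ with $J_{A,n}^{[1]}=0$ for $n\ge 0$ by condition $(ii)$ of Def.~\ref{dfn:goodaction}; by Proposition~\ref{prop:shortening} applied to the vector $J_{A,-1}|0\rangle$ of $\mathfrak{R}$-degree $1$, the mode $J_{A,n}$ decomposes as $J_{A,n}^{[1]}+J_{A,n}^{[0]}+J_{A,n}^{[-1]}$ with the $[1]$ piece vanishing for $n\ge 0$. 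Plugging these decompositions into the two terms of $\bQ^\pm$: the first term $\tfrac1n:\!\eta^{\pm,A}_{-n}J_{A,n}\!:$ contributes $\mathfrak{R}$-degree $\tfrac12$ when paired with $J_{A,n}^{[0]}$ and degree $-\tfrac12$ when paired with $J_{A,n}^{[-1]}$ (the $[1]$ piece, which would give degree $\tfrac32$, is absent in the relevant range), and the cubic ghost term has all three $\eta$'s of degree $\tfrac12$ for a total of $\tfrac32$ — but this must be absorbed; here one observes that in the relative subcomplex the $\eta^{-,A}=\partial c^A$ and the structure-constant term combine so that the cubic term actually sits at degree $\tfrac12$ after accounting for the grading conventions, or else that the $n=0$ modes it would involve are excluded. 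I would define $Q^\pm$ to be the $\mathfrak{R}$-degree $\tfrac12$ part and $S^\pm$ the $\mathfrak{R}$-degree $-\tfrac12$ part of $\bQ^\pm$; since $\mathfrak{R}$-degree is additive under the operations defining $\bQ^\pm$ and the total differential is $\mathfrak{R}$-bounded, this decomposition is forced and $Q^\pm$, $S^\pm$ are well-defined endomorphisms.

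Next I would derive the bracket relations. Since $(\bQ^\pm)^2=0$ and $\bQ^\pm=Q^\pm+S^\pm$ with $Q^\pm$, $S^\pm$ of definite and distinct $\mathfrak{R}$-degree, expanding $(\bQ^\pm)^2=(Q^\pm)^2+[Q^\pm,S^\pm]+(S^\pm)^2=0$ and separating by $\mathfrak{R}$-degree (degrees $1$, $0$, $-1$ respectively) gives $(Q^\pm)^2=0$, $[Q^\pm,S^\pm]=0$, $(S^\pm)^2=0$. Similarly, from $[\bQ^+,\bQ^-]=0$ (Proposition~\ref{prop:QpQm}) expand into $[Q^+,Q^-]+\bigl([Q^+,S^-]+[S^+,Q^-]\bigr)+[S^+,S^-]=0$ and separate the three $\mathfrak{R}$-degrees to read off $[Q^+,Q^-]=0$, $[S^+,S^-]=0$, and $[Q^+,S^-]=-[S^+,Q^-]=-[Q^-,S^+]$.

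Finally, for the adjointness relations I would use the formula for the adjoint of a mode with respect to $\langle-|-\rangle$ from Remark~\ref{rmk:Zgrading}, together with conditions $(i)$ and $(iii)$ of goodness: condition $(iii)$ says $\rho(J_A)=K^{AB}J_B$, fixing how $\rho$ conjugates the current modes, while the adjoints of the symplectic-fermion modes $\eta^{\pm,A}_n$ are given explicitly in Section~\ref{sec:ex2} and, crucially, the ${\rm Sp}(2,\C)$ structure exchanges $\eta^+\leftrightarrow\eta^-$ up to an $\epsilon$-tensor. Taking the adjoint of each monomial in $\bQ^\pm$ term-by-term, the mode $J_{A,n}\mapsto$ (adjoint involving $J_{A,2h_J-2-n}$ conjugated by $\sigma$, $\rho$) and $\eta^{\pm,A}_{-n}\mapsto$ (adjoint $\propto\eta^{\pm,B}_{n}$), and one checks that the $\mathfrak{R}$-degree $\tfrac12$ part of $(\bQ^\alpha)^\dagger$ reassembles into $\epsilon_{\alpha\beta}S^\beta$ while the degree $-\tfrac12$ part of $(\bQ^\alpha)^\dagger$ reassembles into $-\epsilon_{\alpha\beta}Q^\beta$ — these two statements being equivalent since $(\bQ^\alpha)^\dagger=(Q^\alpha)^\dagger+(S^\alpha)^\dagger$ and $(Q^\alpha)^\dagger$ has $\mathfrak{R}$-degree $-\tfrac12$, $(S^\alpha)^\dagger$ has $\mathfrak{R}$-degree $+\tfrac12$ (adjunction flips the sign of $\mathfrak{R}$-degree because $\sigma^{-1}\circ\rho$ intertwines it with its negative). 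The main obstacle I anticipate is the bookkeeping in this last step: tracking the kinematical phases $e^{\ii\pi h_x}$, the $\sigma$-conjugations, the sign $\epsilon_{\alpha\beta}$, and the Grassmann signs simultaneously through the normally-ordered cubic term is delicate, and getting the overall sign in $(S^\alpha)^\dagger=-\epsilon_{\alpha\beta}Q^\beta$ versus $(Q^\alpha)^\dagger=+\epsilon_{\alpha\beta}S^\beta$ to come out consistently requires care; everything else is a routine degree count.
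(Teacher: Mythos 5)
Your overall strategy --- decompose the modes of $\bQ^\pm$ by $\mathfrak{R}$-degree using Proposition \ref{prop:shortening} and the goodness hypotheses, read off the bracket relations by separating $(\bQ^\pm)^2=0$ and $[\bQ^+,\bQ^-]=0$ into homogeneous components, and obtain the adjoint relations from condition (iii) and the known adjoints of the fermion modes --- is exactly the paper's. The degree-separation argument for the commutators and the adjoint computation match: the paper likewise records $J_{A,n}{}^\dagger=-K^{AB}\sigma^{-1}J_{B,-n}\sigma$, hence $J^{[\pm1]}_{A,n}{}^\dagger=K^{AB}J^{[\mp1]}_{B,-n}$ and $J^{[0]}_{A,n}{}^\dagger=-K^{AB}J^{[0]}_{B,-n}$, and then verifies the claimed adjoints term by term.

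The genuine problem is your treatment of the cubic ghost term. You assert it carries total $\mathfrak{R}$-degree $\tfrac32$ because ``all three $\eta$'s have degree $\tfrac12$,'' and then offer two resolutions (a recombination of $\partial c^A$ with the structure-constant term, or exclusion of zero modes), neither of which is the actual reason. The confusion is between the $\mathfrak{R}$-degree of the state $\eta^{\alpha,A}_{-1}|0\rangle$ (which is $\tfrac12$) and the $\mathfrak{R}$-degree of the mode $\eta^{\alpha,A}_k$ as an endomorphism: by Proposition \ref{prop:shortening}, or directly from the grading \eqref{eq:Sfgrading}, the latter is $+\tfrac12$ for $k<0$ and $-\tfrac12$ for $k>0$ (and the zero mode annihilates the vacuum module). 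In each cubic monomial $\eta^{\pm,A}_{-n}\eta^{\pm,B}_{m}\eta^{\mp,C}_{n-m}$ the three mode indices sum to zero, so they cannot all be negative nor all positive; hence every monomial shifts $\mathfrak{R}$-degree by exactly $\pm\tfrac12$ and the would-be degree-$\pm\tfrac32$ pieces are empty for free. This is what the paper means by ``their modes are paired in such a way that the whole differential can either increase or decrease $\mathfrak{R}$-degree by $\tfrac12$,'' and it is manifest in the explicit formulas \eqref{eq:BRSTQ}--\eqref{eq:BRSTS}. A second, smaller omission: to assemble $S^\pm$ correctly you also need $J^{[-1]}_{A,n}=0$ for $n\leqslant 0$, which the paper deduces from condition (ii) together with the fact that $J^{[R]}_{A,n}$ and $J^{[-R]}_{A,-n}$ are adjoint to one another; you only record the vanishing of the $[1]$ components for $n\geqslant 0$.
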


\begin{rmk}
    If $\CV_{\rm matter}$ is a unitary M\"{o}bius algebra with a unitary Hamiltonian $\wh{\fg}_{-2h^\vee}$ action, then the above decomposition of $\bQ^-$---applied to the weak graded-unitary structure described in Section \ref{sec:ex0}---is precisely the one appearing in \cite{FrenkelGarlandZuckerman1986}.
\end{rmk}

\begin{physrmk}
    We identify $Q^-$ and $Q^+$ with the Poincar\'{e} supercharges $Q^1_-$ and $-\wt{Q}_{2\dot-}$ and the $S^+$, $S^-$ with the superconformal charges $\wt{S}^{2\dot-}$ and $S^-_1$. More precisely, we identify them with the restriction these supercharges and superconformal charges to the vector space of Schur operators at zero coupling. Equivalently, we can view them as the corrections to these charges for non-zero gauge coupling, so that, \emph{e.g.}, $Q^- = Q^{1(1)}_-$ and $Q^+ = -\wt{Q}^{(1)}_{2,\dot-}$. The way these operators are exchanged by ${}^\dagger$ realizes the conjugation properties of the Poincar\'{e} supercharges and their paired superconformal charges in radial quantization.
\end{physrmk}

\begin{proof}
    The shortening condition (Proposition \ref{prop:shortening}) of nondegenerate half-integral filtrations together with the goodness hypothesis on the action implies that the modes $J_{A,n}$ decompose as
    \[
    	J_{A,n} = J_{A,n}^{[1]} + J_{A,n}^{[0]} + J_{A,n}^{[-1]}~,
    \]
    where $J_{A,n}^{[1]} = 0$ if $n \geqslant 0$. Moreover, as $J_{A,n}^{[R]}$ and $J_{A, -n}^{[-R]}$ are adjoint to one another, this further implies that $J_{A,n}^{[-1]} = 0$ if $n \leqslant 0$.
    
    We now turn to the differentials $\bQ^\pm$, which are built from the modes $J_{A,n}$ and $\eta^{\alpha, A}_n$ of the currents and symplectic fermions. When they are combined into the differential $\bQ^\pm$, their modes are paired in such a way that the whole differential can either increase $\mathfrak{R}$-degree by $\frac{1}{2}$ or decrease it by $\frac{1}{2}$. We can thus write
    \[
    	\bQ^\pm = Q^\pm + S^\pm
    \]
    where $Q^\pm$ increases $\mathfrak{R}$-degree and $S^\pm$ decreases $\mathfrak{R}$-degree. Explicitly, $Q^\pm$ is given by
    \begin{equation}\label{eq:BRSTQ}
        \begin{aligned}
            Q^\pm & = \sum_{n > 0} \tfrac{1}{n}\bigg(\eta^{\pm, A}_{-n} J^{[0]}_{A,n} - J^{[1]}_{A,-n} \eta^{\pm, A}_{n}\bigg)\\
            & \qquad + \sum_{m,n > 0}f_{ABC} \left(\tfrac{1}{n(m+n)}\eta^{\pm, A}_{-n} \eta^{\mp,B}_{-m}\eta^{\pm,C}_{n+m} - \tfrac{1}{2mn}\eta^{\pm, A}_{-n} \eta^{\pm,B}_{-m}\eta^{\mp,C}_{n+m}\right)\\
        \end{aligned}
    \end{equation}
    and $S^\pm$ is given by
    \begin{equation}\label{eq:BRSTS}
        \begin{aligned}
            S^\pm & = \sum_{n > 0} \tfrac{1}{n}\bigg(\eta^{\pm, A}_{-n} J^{[-1]}_{A,n} - J^{[0]}_{A,-n} \eta^{\pm, A}_{n}\bigg)\\
            & \qquad + \sum_{m,n > 0}f_{ABC} \left(\tfrac{1}{m(m+n)}\eta^{\pm, A}_{-n-m} \eta^{\mp,B}_{n}\eta^{\pm,C}_{m} -\tfrac{1}{2mn}\eta^{\mp,A}_{-n-m}\eta^{\pm, B}_{n} \eta^{\pm,C}_{m}\right)~.\\
        \end{aligned}
    \end{equation}
    As $\bQ^\pm$ is square-zero, these two components are necessarily square-zero and $Q^\pm$ (anti\=/)commutes with $S^\pm$. The remaining (anti\=/)commutation relations follow from the fact that $\bQ^\pm$ (anti\=/)commute with one another.

    We now turn to the adjoint properties of these operators. With the prescribed action of $\rho$ on $J_A$, the adjoint of $J_{A,n}$ is given by
    \[
        J_{A,n}{}^\dagger = -K^{AB} \sigma^{-1} J_{B,-n} \sigma~.
    \]
    In terms of the homogeneous components, this corresponds to
    \[
        J_{A,n}^{[\pm 1]}{}^\dagger = K^{AB} J_{B,-n}^{[\mp1]}~, \qquad J_{A,n}^{[0]}{}^\dagger = -K^{AB} J_{B,-n}^{[0]}~.
    \]
    Using this adjoint of $J_{A,n}^{[0]}$ and $J_{A,n}^{[\pm 1]}$, the claimed adjoints follow from a simple computation.
\end{proof}

\begin{cor}
    Denote the adjoint of $\bQ^\pm$ by $\overline{\bQ}_\pm \coloneqq (\bQ^\pm)^\dagger$. Then these four operators satisfy
    \[
		[\bQ^\alpha, \bQ^\beta] = 0~, \qquad [\overline{\bQ}_\alpha, \overline{\bQ}_\beta] = 0~,
    \]
    and
    \[
        [\bQ^\pm, \overline{\bQ}_\mp] = 0~, \qquad [\bQ^+, \overline{\bQ}_+] = [\bQ^-, \overline{\bQ}_-]~.
    \]
\end{cor}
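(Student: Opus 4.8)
The plan is to reduce everything to the structural relations packaged in the preceding Lemma, so that only elementary bookkeeping of graded-commutator signs remains. The first display $[\bQ^\alpha,\bQ^\beta]=0$ is already Proposition~\ref{prop:QpQm}; taking adjoints of $[\bQ^+,\bQ^-]=0$, and using that $\bQ^\pm$ are Grassmann-odd and square-zero (so in particular $(\overline{\bQ}_\pm)^2 = \big((\bQ^\pm)^2\big)^\dagger = 0$), immediately gives the second display $[\overline{\bQ}_\alpha,\overline{\bQ}_\beta]=0$. The remaining two relations I would obtain by substituting the decomposition $\bQ^\pm = Q^\pm + S^\pm$ from the Lemma and expanding bilinearly.

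Concretely, I would first use the adjoint formulas $(Q^\alpha)^\dagger = \epsilon_{\alpha\beta}S^\beta$ and $(S^\alpha)^\dagger = -\epsilon_{\alpha\beta}Q^\beta$ to rewrite $\overline{\bQ}_\pm = (Q^\pm)^\dagger + (S^\pm)^\dagger$ as a linear combination of $Q^\mp$ and $S^\mp$ alone (note the superscript flips). Then $[\bQ^\pm,\overline{\bQ}_\mp]$, expanded, is a combination of graded commutators among operators \emph{all carrying the same superscript}: the terms $[Q^\pm,Q^\pm]$ and $[S^\pm,S^\pm]$ vanish because these operators are odd and square-zero (so $[A,A]=2A^2=0$), and $[Q^\pm,S^\pm]=0$ by the Lemma. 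Hence $[\bQ^\pm,\overline{\bQ}_\mp]=0$.

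For the last identity I would expand $[\bQ^+,\overline{\bQ}_+]$ and $[\bQ^-,\overline{\bQ}_-]$ in the same way; this time the operators that appear carry \emph{opposite} superscripts, so the surviving contributions come from $[Q^+,Q^-]$, $[S^+,S^-]$, $[Q^+,S^-]$ and $[Q^-,S^+]$. The first two vanish by the Lemma; using that the graded commutator of two odd operators is symmetric, together with $[Q^+,S^-] = -[Q^-,S^+]$, both $[\bQ^+,\overline{\bQ}_+]$ and $[\bQ^-,\overline{\bQ}_-]$ collapse to one and the same operator — which, up to normalization, is the ``Laplacian'' $\Delta$ appearing in Theorem~\ref{thm:BRSTKahlerpackage} — yielding $[\bQ^+,\overline{\bQ}_+]=[\bQ^-,\overline{\bQ}_-]$.

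There is no conceptual obstacle here; the only place demanding genuine care is the sign bookkeeping for products of Grassmann-odd operators and the $\epsilon$-index conventions. In particular I would deduce $(\overline{\bQ}_\pm)^2 = 0$ directly as $\big((\bQ^\pm)^2\big)^\dagger$ rather than by expanding $\overline{\bQ}_\pm$ into $Q^\mp,S^\mp$, since the latter route would force me to track the relative signs of $(Q^\mp)^2$, $(S^\mp)^2$ and $[Q^\mp,S^\mp]$; everything else is a direct consequence of the (anti)commutation and adjoint relations already established in the Lemma.
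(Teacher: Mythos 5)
Your proposal is correct and is essentially the paper's intended argument: the paper states this corollary without proof as an immediate consequence of the preceding Lemma, and your expansion of $\overline{\bQ}_\pm$ in terms of $Q^\mp$, $S^\mp$ via the adjoint relations, followed by the sign bookkeeping with the (anti)commutation relations $[Q^\pm,S^\pm]=0$, $[Q^+,Q^-]=[S^+,S^-]=0$, and $[Q^+,S^-]=-[Q^-,S^+]$, is exactly the intended computation. Your shortcut of obtaining $(\overline{\bQ}_\pm)^2=0$ and $[\overline{\bQ}_+,\overline{\bQ}_-]=0$ directly by taking adjoints of the corresponding relations for $\bQ^\pm$ is also sound.
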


Let us put these observations together. If $\CV_{\rm matter}$ is a weak graded-unitary vertex algebra (of CFT type) with a good Hamiltonian $\wh{\fg}_{-2h^\vee}$ action then $C(\wh{\fg}_{-2h^\vee}, \fg, \CV_{\rm matter})$ is naturally a weak graded-unitary vertex algebra. Moreover, it has a pair of differentials $\bQ^\alpha$, $\alpha = \pm$, their adjoints $\overline{\bQ}_\alpha = (\bQ^\alpha)^\dagger$, and their ``Laplacian'' $\Delta \coloneqq [\bQ^\pm, \overline{\bQ}_\pm]$ satisfying
\begin{equation}\label{eq:Hodge}
	[\bQ^\alpha, \bQ^\beta] = 0~, \qquad [\overline{\bQ}_\alpha, \overline{\bQ}_\beta] = 0~, \qquad [\bQ^\alpha, \overline{\bQ}_\beta] = \delta^\alpha_\beta \Delta~.
\end{equation}
Additionally, the unitary action of ${\rm USp}(2)$ on ${\rm Sf}[\C^2 \otimes \fg]$ by vertex algebra automorphisms described in Remark \ref{rmk:sl2} is inherited by $C(\wh{\fg}_{-2h^\vee}, \fg, \CV_{\rm matter})$. Its action on the differentials $\bQ^\alpha$ is given by
\begin{equation}\label{eq:Lefschetz1}
    [\Pi, \bQ^\pm] = \mp \bQ^\pm~, \qquad [L, \bQ^+] = \bQ^-~, \qquad [\Lambda, \bQ^-] = \bQ^+~,
\end{equation}
and on their adjoints by
\begin{equation}\label{eq:Lefschetz2}
    [\Pi, \overline{\bQ}_\pm] = \pm \overline{\bQ}_\pm~, \qquad [L, \overline{\bQ}_-] = -\overline{\bQ}_+~, \qquad [\Lambda, \overline{\bQ}_+] = -\overline{\bQ}_-~,
\end{equation}
where the remaining commutators vanish, and hence the Laplacian $\Delta$ is invariant. Standard results of Hodge theory (restricted to subspaces of a fixed conformal weight, which are finite dimensional and are preserved by both $\bQ^\pm$ and $\overline{\bQ}_\pm$) imply that there are two natural Hodge decompositions of $C(\wh{\fg}_{-2h^\vee}, \fg, \CV_{\rm matter})$:
\[
	C(\wh{\fg}_{-2h^\vee}, \fg, \CV_{\rm matter}) = \ker \Delta \oplus {\rm im} \bQ^+ \oplus {\rm im} \overline{\bQ}_+ = \ker \Delta \oplus {\rm im} \bQ^- \oplus {\rm im} \overline{\bQ}_-~.
\]
This decomposition, together with the operators $\bQ^\alpha$, $\overline{\bQ}_\alpha$, and $\Delta$, imposes strong constraints on the structure of the $\bQ^-$ (or $\bQ^+$) cohomology in much the same way as de Rham cohomology of compact K\"{a}hler manifolds is constrained. We call this collection of structures the \emph{K\"{a}hler package} for $C(\wh{\fg}_{-2h^\vee}, \fg, \CV_{\rm matter})$, and have the following.%
\footnote{We acknowledge that in more general Hodge-theoretic situations the notion of a K\"{a}hler packages includes additional properties, such as the Lefschetz hyperplane theorem, not all of which have obvious analogues here. We will nevertheless adopt this terminology; it would be interesting to understand to precisely what extent results for K\"ahler manifolds can be extended to the DG vertex algebras underlying relative semi-infinite cohomology of graded-unitary vertex algebras.} %
%

\begin{thm}\label{thm:BRSTKahlerpackage}
    Let $\CV_{\rm matter}$ be a weak graded-unitary vertex algebra with a good Hamiltonian $\wh{\fg}_{-2h^\vee}$ action. Then $C(\wh{\fg}_{-2h^\vee}, \fg, \CV_{\rm matter})$ is also a weak graded-unitary vertex algebra and, moreover, is equipped with a K\"{a}hler package. If $\CV_{\rm matter}$ is graded unitary, then so too is $C(\wh{\fg}_{-2h^\vee}, \fg, \CV_{\rm matter})$.
\end{thm}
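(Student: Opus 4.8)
The plan is to assemble the theorem from the structural facts established above, in four stages: (i) transport the weak graded-unitary structure from $\CV_{\rm matter}$ to $\CV \coloneqq C(\wh{\fg}_{-2h^\vee}, \fg, \CV_{\rm matter})$; (ii) collect the differentials $\bQ^\pm$, their adjoints, and the relations \eqref{eq:Hodge}; (iii) produce the Lefschetz $\fsl(2)$; and (iv) deduce the Hodge decompositions. For stage (i) I would use the identification $\CV \cong (\CV_{\rm matter}\otimes{\rm Sf}[\C^2\otimes\fg])^G$. The tensor product $\CV_{\rm matter}\otimes{\rm Sf}[\C^2\otimes\fg]$ carries the tensor product of the two invariant bilinear forms, the convolution of the two $\mathfrak{R}$-filtrations, and the graded tensor product of the two quaternionic structures, with internal $\Z$ grading taken diagonally; the genuine graded unitarity of ${\rm Sf}[\C^2\otimes\fg]$ is Proposition \ref{prop:Sfgradedunitarity}. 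One checks the routine facts that the tensor of nondegenerate invariant bilinear forms is again nondegenerate and invariant, that a filtered component of the convolution is an orthogonal direct sum of tensors of filtered components of the factors (so the form restricts nondegenerately to it), that the tensor of two quaternionic structures again satisfies $\rho\circ\rho = s$ and $\rho\circ\sigma = \sigma^{-1}\circ\rho$, and that the induced Hermitian form is the tensor product of the two positive-definite ones, hence positive-definite. The compact group $G_c$ acts on this tensor product by vertex algebra automorphisms preserving $\mathfrak{R}$-degree and commuting with $\rho$ and $\sigma$ --- condition $(iii)$ of Definition \ref{dfn:goodaction} is exactly the statement that the moment-map currents are $\rho$-stable, \emph{i.e.} generate the compact real form --- hence preserving both forms; by Haar averaging $\CV$ is an orthogonal direct summand whose complement is a sum of nontrivial $G_c$-isotypic components, and pairing an invariant vector against the $G_c$-average of an element of that complement shows $\CV$ is orthogonal to it for both forms. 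Consequently the restrictions to $\CV$ of $(-,-)$, $\mathfrak{R}_\bullet$, $\rho$, and $\langle -|-\rangle$ retain nondegeneracy on filtered pieces, the quaternionic axioms, and positive-definiteness; together with the fact, recorded above, that $\CV$ is a M\"{o}bius vertex algebra of (strong) CFT type when $\CV_{\rm matter}$ is, this gives the first assertion. If moreover $\mathfrak{R}_\bullet$ is good, then the convolution of two good filtrations is good --- in an $n$-th product $x_ny$ with $n\geqslant0$ on the tensor, the contraction index in at least one factor is $\geqslant0$, so that factor lowers filtration degree --- and goodness descends to the invariant subalgebra, giving the last sentence of the theorem.

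Stage (ii) is pure bookkeeping: Proposition \ref{prop:QpQm} supplies the commuting differentials $\bQ^\pm$ with $\bQ = \bQ^-$; the preceding Lemma supplies the refinement $\bQ^\pm = Q^\pm + S^\pm$ with $(Q^\alpha)^\dagger = \epsilon_{\alpha\beta}S^\beta$ and $(S^\alpha)^\dagger = -\epsilon_{\alpha\beta}Q^\beta$; and the ensuing Corollary supplies \eqref{eq:Hodge}, exhibiting $\overline{\bQ}_\alpha = (\bQ^\alpha)^\dagger$ and the Laplacian $\Delta \coloneqq [\bQ^+, \overline{\bQ}_+] = [\bQ^-, \overline{\bQ}_-]$, which since $\overline{\bQ}_\pm = (\bQ^\pm)^\dagger$ is manifestly self-adjoint and nonnegative. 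For stage (iii), the triple $\{\Pi, L, \Lambda\}$ of Remark \ref{rmk:sl2} acts on $\CV$ because the ${\rm Sp}(2,\C)$-action on ${\rm Sf}[\C^2\otimes\fg]$ commutes with the $G$-action that cuts out $\CV$, and it is an $\fsl(2)$ triple with $\Pi^\dagger = \Pi$, $L^\dagger = \Lambda$ by that remark. The Lefschetz relations \eqref{eq:Lefschetz1} follow from the mode formulas \eqref{eq:Qpm} together with those for $\Pi, L, \Lambda$: each $\bQ^\pm$ is assembled out of the symplectic fermions $\eta^{\pm,A}$ exactly as a weight-$\mp1$ vector of the standard ${\rm Sp}(2,\C)$-module (the currents $J_A$ being inert), while $L$ and $\Lambda$ trade $\eta^+$ for $\eta^-$ with coefficients arranged so that $[L,\bQ^+] = \bQ^-$ and $[\Lambda,\bQ^-] = \bQ^+$; applying ${}^\dagger$ gives \eqref{eq:Lefschetz2}, and $\Delta$ is $\fsl(2)$-invariant since $\bQ^\alpha$ and $\overline{\bQ}_\alpha$ span dual representations and, by \eqref{eq:Hodge}, $[\bQ^\alpha, \overline{\bQ}_\beta]$ is purely its $\delta^\alpha_\beta$-part.

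For stage (iv) I would run the standard finite-dimensional Hodge argument on each conformal weight space: $\CV_h$ is finite-dimensional and preserved by $\bQ^\pm$ and $\overline{\bQ}_\pm$ (zero modes of weight-one fields and their adjoints have conformal weight zero), and $\Delta|_{\CV_h}$ is self-adjoint, so $\CV_h = \ker\Delta \oplus \mathrm{im}\,\Delta$; using $(\bQ^+)^2 = (\overline{\bQ}_+)^2 = 0$, $[\bQ^+,\overline{\bQ}_+] = \Delta$, and $\ker\Delta = \ker\bQ^+\cap\ker\overline{\bQ}_+$, the image of $\Delta$ splits orthogonally as $\mathrm{im}\,\bQ^+\oplus\mathrm{im}\,\overline{\bQ}_+$, and likewise with $+$ replaced by $-$. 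Summing over $h$ gives the two displayed Hodge decompositions and completes the K\"{a}hler package.

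I expect stage (i) to be the only one demanding genuine care, specifically the verification that nondegeneracy on each filtered component and positive-definiteness of the Hermitian form survive \emph{both} the tensor product \emph{and} the passage to $G$-invariants; the latter rests on compactness of $G_c$ and on condition $(iii)$ of Definition \ref{dfn:goodaction} making the $G_c$-action compatible with $\rho$ and $\sigma$. Everything else is either quoted directly from the preceding Lemma and Corollary or is the standard linear-algebra Hodge decomposition applied weight space by weight space.
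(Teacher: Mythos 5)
Your proposal is correct and follows essentially the same route as the paper, which states this theorem as a summary of the preceding discussion (the inherited weak graded-unitary structure on $(\CV_{\rm matter}\otimes{\rm Sf}[\C^2\otimes\fg])^G$, the Lemma and Corollary giving $\bQ^\pm$, $\overline{\bQ}_\pm$, and Eq. \eqref{eq:Hodge}, the ${\rm USp}(2)$ action of Remark \ref{rmk:sl2}, and finite-dimensional Hodge theory on each conformal weight space). Your stage (i) — in particular the Haar-averaging argument for why nondegeneracy and positive-definiteness survive passage to $G$-invariants, resting on condition $(iii)$ of Definition \ref{dfn:goodaction} — supplies details the paper only asserts, and is a welcome addition rather than a deviation.
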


\begin{physrmk}
	In light of Proposition \ref{prop:Sbgradedunitarity}, this theorem holds for all examples coming from all Lagrangian $\CN=2$ superconformal gauge theories.
\end{physrmk}

\begin{rmk}
    The relations in Eqs. \eqref{eq:Lefschetz1} and \eqref{eq:Lefschetz2} can be viewed as the K\"{a}hler identites expressing the action of the ``Lefschetz'' $\fsl(2)$ triple $\{\Pi, L, \Lambda\}$ on the differentials $\bQ^\pm$, their adjoints $\overline{\bQ}_\pm$, and their Laplacian $\Delta$; indeed, this is the reason for the notation. For example, assuming $\CV_{\rm matter}$ is concentrated in cohomological degree $0$, the weight spaces of $\Pi$ are precisely the subspaces of a fixed cohomological degree in both cases. That said, unlike in K\"{a}hler geometry where the Lefschetz operator is realized by wedge product with the K\"{a}hler form, our ``Lefschetz operator'' $L$ is not realized as a vertex-algebraic product with an element in $C(\wh{\fg}_{-2h^\vee}, \fg, \CV_{\rm matter})$ due to its status as an outer automorphism, as emphasized in Remark \ref{rmk:sl2}.
\end{rmk}

\subsection{\label{sec:recombination}Recombination, quartet mechanism, and the $\bQ^- \bQ^+$ lemma}

We saw in the previous subsection that when $\CV_{\rm matter}$ is a weak graded-unitary vertex algebra with a good Hamiltonian $\wh{\fg}_{-2h^\vee}$ action, the vertex algebra $ C(\wh{\fg}_{-2h^\vee}, \fg, \CV_{\rm matter})$ is a complex Hilbert space equipped with a unitary action of the ${}^*$-Lie superalgebra with basis $\bQ^\pm$, $\overline{\bQ}_\pm = (\bQ^\pm)^\dagger$, and $\Delta$, with (anti-)commutation relations given in Eq. \eqref{eq:Hodge}. In this subsection we describe the main structural consequence of this observation.

\begin{physrmk}
    As mentioned in Section \ref{sec:BRST}, the four-dimensional interpretation of the vertex algebra $C(\wh{\fg}_{-2h^\vee}, \fg, \CV_{\rm matter})$ is as Schur operators at zero gauge coupling. The differential, say, $\bQ^-$ describes how this algebra is modified as the gauge coupling is tuned away from zero: the cohomology classes of $\bQ^-$ describe operators that are still of Schur type after turning on the gauge coupling. The elements of $C(\wh{\fg}_{-2h^\vee}, \fg, \CV_{\rm matter})$ that do not survive must be lifted in a very specific way: in quartets as dictated by the recombination of multiplets of Schur type, \emph{cf.} Section 3.4.1 of \cite{Beem:2013sza}.
\end{physrmk}

\begin{prop}\label{prop:quartet}
    Let $V$ be a finite-dimensional unitary representation of the ${}^*$-Lie superalgebra in Eq. \eqref{eq:Hodge}. Then $V$ admits an orthogonal decomposition as the direct sum of trivial representations and copies of the four-dimensional representation appearing in Figure \ref{fig:quartet}.
\end{prop}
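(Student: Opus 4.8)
The strategy is the standard one: exploit the positivity coming from unitarity to reduce the representation to the canonical anticommutation relations of two independent fermionic oscillators, for which semisimplicity and the classification of modules are classical. First I would record the positivity facts. Since $\overline{\bQ}_\alpha=(\bQ^\alpha)^\dagger$, for every $v\in V$ and either choice of sign one has
\[
    \langle \Delta v,v\rangle=\langle\{\bQ^\pm,\overline{\bQ}_\pm\}v,v\rangle=\| \bQ^\pm v\|^2+\|\overline{\bQ}_\pm v\|^2\ \geqslant\ 0,
\]
so $\Delta$ is self-adjoint and positive semidefinite, and $\ker\Delta=\ker\bQ^+\cap\ker\overline{\bQ}_+=\ker\bQ^-\cap\ker\overline{\bQ}_-$. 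A short computation with the relations in Eq.~\eqref{eq:Hodge} and the graded Jacobi identity shows that $\Delta$ is central in the ${}^*$-Lie superalgebra, so $\ker\Delta$ is a subrepresentation on which all four odd generators act by zero, i.e.\ a direct sum of trivial representations, and any orthonormal basis of $\ker\Delta$ realizes such a decomposition.

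Next I would restrict to the orthogonal complement $W\coloneqq(\ker\Delta)^\perp$, which is a subrepresentation by self-adjointness of $\Delta$ and on which $\Delta$ is invertible and positive. Decompose $W=\bigoplus_{\lambda>0}W_\lambda$ into $\Delta$-eigenspaces, each again a subrepresentation since $\Delta$ is central. On a fixed $W_\lambda$ the rescaled operators $a_\pm\coloneqq\lambda^{-1/2}\bQ^\pm$ and $a_\pm^\dagger=\lambda^{-1/2}\overline{\bQ}_\pm$ satisfy $a_\pm^2=0$, $\{a_+,a_-\}=\{a_+,a_-^\dagger\}=0$, and $\{a_\pm,a_\pm^\dagger\}=\id$, i.e.\ the canonical anticommutation relations of two independent fermionic oscillators. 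Hence $W_\lambda$ is a finite-dimensional unitary module over the complex Clifford algebra on four generators, which is isomorphic to $M_4(\C)$ and therefore semisimple with a unique irreducible module, the $4$-dimensional Fock space with vacuum $|\Omega\rangle$ annihilated by $a_+,a_-$ and basis $|\Omega\rangle,\ a_+^\dagger|\Omega\rangle,\ a_-^\dagger|\Omega\rangle,\ a_+^\dagger a_-^\dagger|\Omega\rangle$. Concretely, $p_\pm\coloneqq a_\pm^\dagger a_\pm=\lambda^{-1}\overline{\bQ}_\pm\bQ^\pm$ are commuting self-adjoint projections on $W_\lambda$, so $W_\lambda$ splits into their joint eigenspaces and the $a_\pm^\dagger$ identify it with $W_\lambda^\Omega\otimes(\text{Fock})$, where $W_\lambda^\Omega=W_\lambda\cap\ker\bQ^+\cap\ker\bQ^-$; an orthonormal basis of $W_\lambda^\Omega$ then decomposes $W_\lambda$ into an orthogonal sum of copies of the Fock module. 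Unrescaling $a_\pm\mapsto\bQ^\pm$ identifies this $4$-dimensional module with the diamond in Figure~\ref{fig:quartet}, on which $\Delta$ acts by $\lambda$. Assembling the decompositions of $\ker\Delta$ and of the $W_\lambda$ gives the asserted orthogonal decomposition of $V$.

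The argument is entirely routine; the only points that need a little care are verifying that $p_+$ and $p_-$ are commuting projections — equivalently, that the $a_\pm$ genuinely generate two \emph{independent} oscillators, hence the full Clifford algebra $\mathrm{Cl}_4(\C)\cong M_4(\C)$ — and then matching the resulting $4$-dimensional module with the labelled picture in Figure~\ref{fig:quartet}. I do not anticipate any genuine obstacle.
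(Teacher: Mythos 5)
Your proof is correct, and its skeleton coincides with the paper's: both arguments diagonalize the positive semi-definite, self-adjoint Laplacian $\Delta$, identify $\ker\Delta$ with a sum of trivial representations via the positivity identity $\langle \Delta v,v\rangle=\|\bQ^\pm v\|^2+\|\overline{\bQ}_\pm v\|^2$, and then analyze each positive eigenspace separately. Where you genuinely diverge is in the treatment of the positive part. The paper works by hand: from $x=\delta^{-1}(\bQ^\pm\overline{\bQ}_\pm x+\overline{\bQ}_\pm\bQ^\pm x)$ it extracts the two orthogonal splittings $V_\delta=\ker\bQ^\pm|_{V_\delta}\oplus\ker\overline{\bQ}_\pm|_{V_\delta}$, intersects them into a fourfold orthogonal decomposition, and then verifies directly (using $\|\bQ^\pm\wt{x}\|^2=\delta\|\wt{x}\|^2>0$) that each vector of $\ker\overline{\bQ}_+\cap\ker\overline{\bQ}_-$ generates a nondegenerate quartet. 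You instead rescale to canonical anticommutation relations and invoke the structure theory of $\mathrm{Cl}_4(\C)\cong M_4(\C)$: semisimplicity plus uniqueness of the simple module does the whole job at once. Your route is arguably cleaner and makes the ``two independent fermionic oscillators'' structure explicit (which is also the physical content of the quartet/recombination mechanism); the paper's route is more self-contained and produces the explicit projectors onto the four corners of the diamond, which it reuses immediately in the proof of the $\bQ^-\bQ^+$ lemma (Corollary \ref{cor:ddc}), where one needs to locate the distinguished representative $\wt{x}\in\ker\overline{\bQ}_+\cap\ker\overline{\bQ}_-$. One cosmetic remark: your Fock vacuum $|\Omega\rangle$ is annihilated by $\bQ^\pm$ and therefore sits at the \emph{right-hand} corner of Figure \ref{fig:quartet}, corresponding to $\bQ^-\bQ^+\wt{x}$ up to a factor of $\lambda$, while the paper's generator $\wt{x}$ is the left-hand corner annihilated by the adjoints; the modules are of course the same, but the dictionary is worth stating when matching to the figure.
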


\begin{rmk}
    The nonzero elements of $V$ that belong to the trivial representations will be the $\Delta$-harmonic vectors; these will give rise to nonzero classes in $\bQ^-$- or $\bQ^+$-cohomology. The remaining $\bQ^-$- or $\bQ^+$-closed vectors will residing in four-dimensional representations and will necessarily be exact, hence give rise to trivial cohomology classes.
\end{rmk}

\begin{proof}
    We first note that $\Delta$ is a positive semi-definite, self-adjoint operator on a finite-dimensional Hilbert space and hence we have an orthogonal decomposition of $V$ into harmonic eigenspaces:
    \[
        V = \bigoplus_{\delta \geqslant 0} V_\delta~.
    \]
    Moreover, if $\Delta x = \delta x$ we have
    \[
        \delta |\!| x|\!|^2 = \langle x | \Delta x\rangle = \langle \bQ^\pm x | \bQ^\pm x\rangle + \langle \overline{\bQ}_\pm x | \overline{\bQ}_\pm x\rangle = |\!| \bQ^\pm x|\!|^2 + |\!| \overline{\bQ}_\pm x|\!|^2 \geqslant 0~. 
    \]
    In particular, $\delta = 0$ if and only if $\bQ^\pm x = 0 = \overline{\bQ}_\pm x$, \emph{i.e.}, $V_0$ admits an orthogonal decomposition into a direct sum of trivial representations.

    If $\delta > 0$ then we can write
    \[
        x = \delta^{-1} \left(\bQ^+ \overline{\bQ}_+ x + \overline{\bQ}_+ \bQ^+ x \right) = \delta^{-1} \left(\bQ^- \overline{\bQ}_- x + \overline{\bQ}_- \bQ^- x \right)
    \]
    which gives rise to two orthogonal decompositions of $V_\delta$:
    \[
        V_\delta = \ker \bQ^+|_{V_\delta} \oplus \ker \overline{\bQ}_+|_{V_\delta} = \ker \bQ^-|_{V_\delta} \oplus \ker \overline{\bQ}_-|_{V_\delta} ~.
    \]
    We can combine these into an orthogonal decomposition of the form
    \[
    \begin{aligned}
        V_\delta & = \left(\ker \bQ^+|_{V_\delta} \cap \ker \bQ^-|_{V_\delta}\right) \oplus \left(\ker \overline{\bQ}_+|_{V_\delta} \cap \ker \bQ^-|_{V_\delta}\right)\\
        & \qquad \oplus \left(\ker \bQ^+|_{V_\delta} \cap \ker \overline{\bQ}_-|_{V_\delta}\right) \oplus \left(\ker \overline{\bQ}_+|_{V_\delta} \cap \ker \overline{\bQ}_-|_{V_\delta}\right)~.
    \end{aligned}
    \]
    For any nonzero $\tilde{x} \in \ker \overline{\bQ}_+|_{V_\delta} \cap \ker \overline{\bQ}_-|_{V_\delta}$, the vector $\bQ^\pm \tilde{x}$ is automatically an element of $\ker \bQ^\pm \cap \ker \overline{\bQ}_\mp$ and is necessarily nonzero because
    \[
        |\!| \bQ^\pm \tilde{x} |\!|^2 = \langle \bQ^\pm \tilde{x} | \bQ^\pm \tilde{x} \rangle = \langle \tilde{x} | \overline{\bQ}_\pm \bQ^\pm x \rangle = \langle \tilde{x} | \Delta \tilde{x} \rangle = \delta |\!|\tilde{x} |\!|^2 > 0~.
    \]
    Similarly, the vector $\bQ^- \bQ^+x$ is a nonzero element of $\ker \bQ^+ \cap \ker \bQ^-$. The four vectors $\tilde{x}$, $\bQ^\pm x$, and $\bQ^- \bQ^+ x$ form an irreducible four-dimensional representation illustrated in Figure \ref{fig:quartet}. Finally, if $\tilde{x}$ and $\tilde{y}$ are orthogonal elements of $\ker \overline{\bQ}_+|_{V_\delta} \cap \ker \overline{\bQ}_-|_{V_\delta}$ then the resulting four-dimensional representations will be orthogonal.
\end{proof}

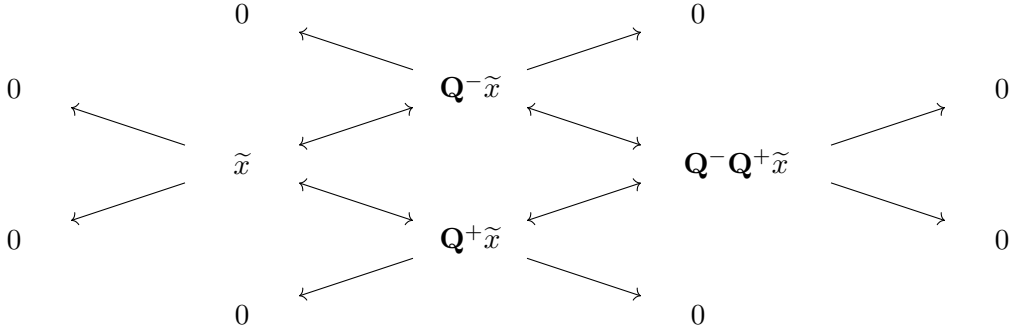
\begin{figure}[ht]
	\begin{tikzpicture}
		\draw (0,0) node {$\wt{x}$};
		\draw (3,1) node {$\bQ^- \wt{x}$};
		\draw (3,-1) node {$\bQ^+ \wt{x}$};
		\draw (6.5,0) node {$\bQ^- \bQ^+ \wt{x}$};
		
		\draw (0,2) node {$0$};
		\draw (0,-2) node {$0$};
		\draw (-3,1) node {$0$};
		\draw (-3,-1) node {$0$};
		\draw (6,2) node {$0$};
		\draw (6,-2) node {$0$};
		\draw (10,1) node {$0$};
		\draw (10,-1) node {$0$};
		
		\draw[->] (-0.75, 0.25) -- (-2.25, 0.75);
		\draw[->] (-0.75, -0.25) -- (-2.25, -0.75);
		\draw[->] (7.75, 0.25) -- (9.25, 0.75);
		\draw[->] (7.75, -0.25) -- (9.25, -0.75);
		
		\draw[<->] (0.75, 0.25) -- (2.25, 0.75);
		\draw[<->] (0.75, -0.25) -- (2.25, -0.75);
		\draw[<->] (5.25, 0.25) -- (3.75, 0.75);
		\draw[<->] (5.25, -0.25) -- (3.75, -0.75);
		
		\draw[<-] (0.75, 1.75) -- (2.25, 1.25);
		\draw[<-] (0.75, -1.75) -- (2.25, -1.25);
		\draw[<-] (5.25, 1.75) -- (3.75, 1.25);
		\draw[<-] (5.25, -1.75) -- (3.75, -1.25);
	\end{tikzpicture}
	\caption{The quartet structure of operators lifted from $\bQ^-$- and $\bQ^+$-cohomology. The rightwards arrows correspond to acting with $\bQ^-$ (upwards) and $\bQ^+$ (downwards) whereas the leftwards arrows correspond to acting with $\overline{\bQ}_+$ (upwards) and $\overline{\bQ}_-$ (downwards).}
	\label{fig:quartet}
\end{figure}

Returning to vertex algebras, if $\CV_{\rm matter}$ is a weak graded-unitary vertex algebra with a good Hamiltonian $\wh{\fg}_{-2h^\vee}$ action, then we can apply this Proposition to the graded subspaces of $C(\wh{\fg}_{-2h^\vee}, \fg, \CV_{\rm matter})$. While the adjoint operators $\overline{\bQ}_\pm$, although square-zero, are not derivations of the vertex algebra structure on $\CV$,  we can deduce the following weaker property that is formulated only in terms of the differentials $\bQ^\pm$.

\begin{cor}[$\bQ^-\bQ^+$ lemma]\label{cor:ddc}
	Let $\CV_{\rm matter}$ be a weak graded-unitary vertex algebra with a good Hamiltonian $\wh{\fg}_{-2h^\vee}$ action. If $x \in C(\wh{\fg}_{-2h^\vee}, \fg, \CV_{\rm matter})$ is such that
    \[
		1)~\bQ^- x = 0 = \bQ^+ x \qquad \qquad 2)~x = \bQ^- x' ~\text{ or }~ x = \bQ^+ x'
	\]
	for some $x' \in C(\wh{\fg}_{-2h^\vee}, \fg, \CV_{\rm matter})$, then there exists $\wt{x} \in C(\wh{\fg}_{-2h^\vee}, \fg, \CV_{\rm matter})$ with $x = \bQ^- \bQ^+ \wt{x}$.
\end{cor}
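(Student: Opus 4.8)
The plan is to deduce this from the quartet decomposition of Proposition~\ref{prop:quartet} together with the standard orthogonality of harmonic vectors, in direct analogy with the classical $\partial\overline\partial$-lemma. First I would reduce to the case that $x$ is homogeneous with respect to conformal weight: every element of a vertex algebra is a finite sum of conformal-weight-homogeneous components, the hypotheses and the desired conclusion are compatible with this grading (the operators $\bQ^\pm$, being zero modes of weight-one fields, and their adjoints $\overline{\bQ}_\pm$ all preserve conformal weight), so it suffices to work inside a single finite-dimensional weight subspace $V\subseteq C(\wh{\fg}_{-2h^\vee},\fg,\CV_{\rm matter})$, which carries a finite-dimensional unitary representation of the ${}^*$-Lie superalgebra of Eq.~\eqref{eq:Hodge}.

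Next I would invoke Proposition~\ref{prop:quartet} to write $V = V_{\rm harm}\oplus\bigoplus_i Q_i$ orthogonally, where $V_{\rm harm}=\ker\Delta|_V$ and each $Q_i$ is a quartet with distinguished top vector $\tilde x_i\in\ker\overline{\bQ}_+\cap\ker\overline{\bQ}_-$ and basis $\{\tilde x_i,\ \bQ^-\tilde x_i,\ \bQ^+\tilde x_i,\ \bQ^-\bQ^+\tilde x_i\}$ as in Figure~\ref{fig:quartet}. Using $(\bQ^\pm)^2=0$ and $\bQ^+\bQ^-=-\bQ^-\bQ^+$, a direct inspection of the action of $\bQ^\pm$ on this basis shows that the subspace of $Q_i$ annihilated by \emph{both} $\bQ^-$ and $\bQ^+$ is exactly the line spanned by $\bQ^-\bQ^+\tilde x_i$. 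Expanding $x = x_{\rm harm}+\sum_i\bigl(a_i\tilde x_i+b_i\bQ^-\tilde x_i+c_i\bQ^+\tilde x_i+d_i\bQ^-\bQ^+\tilde x_i\bigr)$ and imposing hypothesis~1), namely $\bQ^- x=0=\bQ^+ x$, then forces $a_i=b_i=c_i=0$ for all $i$ (apply $\bQ^-$ to kill $a_i,c_i$, then $\bQ^+$ to kill $b_i$, using linear independence of the quartet basis vectors), so that $x = x_{\rm harm}+\sum_i d_i\,\bQ^-\bQ^+\tilde x_i$.

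Finally I would use hypothesis~2) to eliminate the harmonic term. Harmonic vectors are orthogonal to $\operatorname{im}\bQ^\pm$: if $h\in\ker\Delta$ then $\overline{\bQ}_\pm h=0$, whence $\langle h\mid\bQ^\pm x'\rangle=\langle\overline{\bQ}_\pm h\mid x'\rangle=0$. Since $x$ is $\bQ^-$-exact or $\bQ^+$-exact by hypothesis, $x$ is orthogonal to $\ker\Delta$; but $x_{\rm harm}$ is precisely the orthogonal projection of $x$ onto $V_{\rm harm}=\ker\Delta|_V$ in the decomposition above, so $x_{\rm harm}=0$. Therefore $x=\sum_i d_i\,\bQ^-\bQ^+\tilde x_i=\bQ^-\bQ^+\bigl(\sum_i d_i\tilde x_i\bigr)$, and $\wt x\coloneqq\sum_i d_i\tilde x_i$ (a finite sum within each conformal weight, hence a genuine element of $C(\wh{\fg}_{-2h^\vee},\fg,\CV_{\rm matter})$) satisfies $x=\bQ^-\bQ^+\wt x$. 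I do not expect a genuine obstacle here beyond Proposition~\ref{prop:quartet} itself; the only points needing care are the sign conventions within the quartet, so that $\bQ^-\bQ^+\tilde x_i$ really is the unique bi-closed direction, and the bookkeeping ensuring that the finite-dimensional Hodge theory is applied weight by weight.
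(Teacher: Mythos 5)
Your proposal is correct and follows essentially the same route as the paper: the paper's proof likewise invokes Proposition~\ref{prop:quartet} to place $x$ in (a sum of) quartets and takes $\wt{x}$ to be the corresponding vector(s) in $\ker \overline{\bQ}_+ \cap \ker \overline{\bQ}_-$. Your version merely spells out the details the paper leaves implicit — the weight-by-weight reduction, the identification of the bi-closed line in each quartet, and the orthogonality argument killing the harmonic component — all of which are accurate.
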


\begin{rmk}
    The name $\bQ^- \bQ^+$ lemma is meant to parallel analogous results that hold for differential forms on compact K\"{a}hler manifolds, namely the $\diff\!\diff^c$ or $\pd \opd$ lemmas, \emph{cf.} Lemma 5.11 of \cite{DGMS}.
\end{rmk}

\begin{proof}
    We apply Proposition \ref{prop:quartet} to conclude that if $x$ satisfies the hypotheses then it belongs to a four-dimensional representation of the form in Figure \ref{fig:quartet}; we then take $\wt{x}$ to be the vector in this representation belonging to $\ker \overline{\bQ}_+ \cap \ker \overline{\bQ}_-$.
\end{proof}

\section{\label{sec:consequences}Consequences of graded unitarity}

The observations of the previous section have important consequences for the structure of the vertex algebra $C(\wh{\fg}_{-2h^\vee}, \fg, \CV_{\rm matter})$ and the relative semi-infinite cohomology $H^{\scriptstyle{\frac{\infty}{2}}+\bullet}(\wh{\fg}_{-2h^\vee}, \fg, \CV_{\rm matter})$. Our first result establishes that the cohomology inherits a graded-unitary structure from $\CV$. This proves, in particular, that the VOAs coming from Lagrangian $\CN=2$ SCFTs in four dimensions all possess a graded-unitary structure.

We then establish two properties that follow from the $\bQ^- \bQ^+$ lemma (Corollary \ref{cor:ddc}). The first is that the cohomology inherits an ${\rm USp}(2)$ automorphism from that of $C(\wh{\fg}_{-2h^\vee}, \fg, \CV_{\rm matter})$. The second property is that the DG vertex algebra $(C(\wh{\fg}_{-2h^\vee}, \fg, \CV_{\rm matter}), \bQ^-)$ is connected by a (chain of) quasi-isomorphisms of DG vertex algebras that to its cohomology (equipped with a trivial differential). We expect that in a suitable homotopy theory of vertex algebras this will amount to the formality for these DG vertex algebras.

For the third and final property, we use the full K\"{a}hler package (Theorem \ref{thm:BRSTKahlerpackage}) and consider reductive Lie algebras with multiple simple and abelian factors; we show that the relative semi-infinite cohomology for the full Lie algebra is isomorphic to an iterated relative semi-infinite cohomology for its simple factors. All M\"{o}bius vertex algebras appearing in this section should be assumed to be of CFT type.

\subsection{\label{sec:cohomology}Graded unitarity of relative semi-infinite cohomology}

The aim of this subsection is to prove the following Theorem:
\begin{thm}\label{thm:gradedunitaryBRST}
    Let $\CV_{\rm matter}$ be a weak graded-unitary vertex algebra with a good Hamiltonian $\wh{\fg}_{-2h^\vee}$ action. Then the relative semi-infinite cohomology $H^{\scriptstyle{\frac{\infty}{2}} + \bullet}(\wh{\fg}_{-2h^\vee}, \fg, \CV_{\rm matter})$ is a weak graded-unitary vertex algebra. If $\CV_{\rm matter}$ is graded-unitary then so too is $H^{\scriptstyle{\frac{\infty}{2}} + \bullet}(\wh{\fg}_{-2h^\vee}, \fg, \CV_{\rm matter})$.
\end{thm}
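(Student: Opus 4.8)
The plan is to identify $H^{\frac{\infty}{2}+\bullet}(\wh{\fg}_{-2h^\vee},\fg,\CV_{\rm matter}) = H(\CV,\bQ^-)$, where $\CV \coloneqq C(\wh{\fg}_{-2h^\vee},\fg,\CV_{\rm matter})$, with the harmonic subspace $\ker\Delta \subseteq \CV$ supplied by the K\"ahler package of Theorem~\ref{thm:BRSTKahlerpackage}, and then to show that the entire weak graded-unitary structure that Theorem~\ref{thm:BRSTKahlerpackage} puts on $\CV$ restricts to $\ker\Delta$. Since $\bQ^-$ is a weight-zero derivation, $H(\CV,\bQ^-)$ is a M\"obius vertex algebra (a VOA when $\CV_{\rm matter}$ is) of CFT type, so by Proposition~\ref{prop:strong} and Corollary~\ref{cor:simple} it suffices to equip it with a nondegenerate invariant bilinear form, a nondegenerate half-integral filtration, and a quaternionic structure whose Hermitian form is positive-definite; for the graded-unitary assertion one must additionally verify that this filtration is good. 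The key structural input is that, by the relations of the lemma of Section~\ref{sec:Kahler} decomposing $\bQ^\pm = Q^\pm + S^\pm$ (in particular $\{Q^+,Q^-\} = \{S^+,S^-\} = 0$), one has $\Delta = \pm 2\{Q^-,S^+\}$; hence $\Delta$ is homogeneous of $\mathfrak{R}$-degree $0$ and of conformal weight $0$, so that $\ker\Delta$ is preserved by $\sigma$, by the $\mathfrak{R}$-grading, and by $L_0$ and $L_{\pm 1}$, and the harmonic projection $\Pi$ onto $\ker\Delta$ commutes with all of these.

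First I would handle the bilinear form. Using that $\mathcal{Q}^\pm$ are quasi-primary of conformal weight one together with the adjoint formula from the proof of Proposition~\ref{prop:shortening}, a short computation shows that $\bQ^-$ is anti-self-adjoint and $\overline{\bQ}_-$ self-adjoint with respect to $(-,-)$, hence $\Delta$ is $(-,-)$-self-adjoint. On each finite-dimensional subspace $\CV_{h,R}$, self-adjointness of $\Delta$ with respect to both $(-,-)$ and the positive-definite Hermitian form yields a $(-,-)$-orthogonal decomposition $\CV_{h,R} = (\ker\Delta\cap\CV_{h,R}) \oplus ({\rm im}\,\Delta \cap \CV_{h,R})$, so that $(-,-)$ restricts nondegenerately to $\ker\Delta$ and $\Pi$ is $(-,-)$-self-adjoint. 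Anti-self-adjointness of $\bQ^-$ moreover shows that $(-,-)$ descends to a well-defined form on $H(\CV,\bQ^-)$, which under the Hodge isomorphism is exactly $(-,-)|_{\ker\Delta}$ and which is invariant because $\bQ^-$ is a derivation commuting with the M\"obius operators. Thus $H(\CV,\bQ^-)$ carries a nondegenerate invariant bilinear form and is, by Proposition~\ref{prop:strong}, of strong CFT type and simple.

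Next I would transport the $\mathfrak{R}$-filtration by $\mathfrak{R}_p H \coloneqq \ker\Delta\cap\mathfrak{R}_p\CV$. Since $\Delta$ has $\mathfrak{R}$-degree $0$, $\Pi$ preserves $\mathfrak{R}$-degree, and because the vertex-algebra product on the harmonic model is $\Pi$ applied to the product of $\CV$, the filtration-compatibility axioms pass to $H$; when $\CV_{\rm matter}$ is graded-unitary the filtration on $\CV$ is good by Theorem~\ref{thm:BRSTKahlerpackage}, and goodness likewise passes to $H$. Nondegeneracy of $(-,-)$ on each $\mathfrak{R}_p H$ follows from self-adjointness of $\Delta$ on the $\CV_{h,R}$ just as above. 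For the quaternionic structure the point is $\rho\Delta = \Delta\rho$: a direct computation with \eqref{eq:BRSTQ}--\eqref{eq:BRSTS} and the $\mathfrak{R}$-degree-preserving action of $\rho$ on the generating modes (Definition~\ref{dfn:goodaction}(iii) and Section~\ref{sec:ex2}) shows that $\rho$ carries each of $Q^\pm, S^\pm$ to a scalar multiple of $Q^\mp, S^\mp$, whence $\rho\Delta\rho^{-1}$ is a positive scalar multiple of $\Delta$ and therefore equals $\Delta$ by positivity. Then $\rho$ preserves $\ker\Delta$ and commutes with $\Pi$, so it restricts to an anti-linear map on $H(\CV,\bQ^-)$; all of the conjugation axioms, as well as $\rho^2 = s$ and $\rho\sigma = \sigma^{-1}\rho$, transport through $\Pi$ from $\CV$ (with $\rho(\omega_\CV) = \omega_\CV$ being carried to the harmonic representative of the conformal class), so $\rho|_{\ker\Delta}$ is a quaternionic structure on $H(\CV,\bQ^-)$. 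Finally, as $\sigma$, $\rho$ and $(-,-)$ on $H$ are all restrictions of those on $\CV$, the Hermitian form $\langle x|y\rangle = ((\sigma\rho)(x),y)$ on $H$ is the restriction to $\ker\Delta$ of the positive-definite Hermitian form on $\CV$, hence positive-definite; combined with the preceding paragraphs this proves $H(\CV,\bQ^-)$ weak graded-unitary, and graded-unitary when $\CV_{\rm matter}$ is. By Proposition~\ref{prop:QpQm} the same conclusion holds verbatim with $\bQ^+$ in place of $\bQ^-$.

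The main obstacle is the descent of the \emph{non-derivation} data --- the bilinear form, the filtration, and above all the conjugation $\rho$, which does not commute with $\bQ^-$ --- to the cohomology regarded as a vertex algebra. Everything reduces to showing that $\Delta$ is self-adjoint with respect to $(-,-)$ and commutes with both $\rho$ and $\sigma$, so that the harmonic projection $\Pi$ is simultaneously compatible with the product, the bilinear form, the filtration, and the conjugation; the two facts that make this work are the identity $\Delta = \pm 2\{Q^-,S^+\}$ (so $\Delta$ is $\mathfrak{R}$-homogeneous of degree zero) and the quasi-primarity of the BRST currents $\mathcal{Q}^\pm$.
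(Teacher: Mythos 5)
Your proof is correct, and it rests on the same engine as the paper's --- the K\"ahler package of Theorem \ref{thm:BRSTKahlerpackage} and the resulting Hodge theory on each finite-dimensional $\CV_{h,R}$ --- but it packages the descent of the structures differently. The paper's central difficulty is exactly the one you identify ($\rho\circ\bQ^- = \ii\,\bQ^+\circ\rho$, so $\rho$ does not preserve $\ker\bQ^-$), and it resolves it by first proving a $\bQ^-\bQ^+$ lemma (Corollary \ref{cor:ddc}, via the quartet decomposition of Proposition \ref{prop:quartet}) and then replacing $H(\CV,\bQ^-)$ by the $\bQ^\pm$-symmetric model $(\ker\bQ^-\cap\ker\bQ^+)/\,{\rm im}\,\bQ^-\bQ^+$ (Proposition \ref{prop:symmquot}); this model is manifestly a quotient of a vertex subalgebra by an ideal and is manifestly $\rho$-stable, after which the filtration, bilinear form and conjugation descend tautologically and only nondegeneracy and positivity require harmonic representatives. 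You instead work directly with $\ker\Delta$, which requires you to prove $\rho\Delta=\Delta\rho$ by hand (your sign-chase via $\Delta=\pm2\{Q^-,S^+\}$ and the action of $\rho$ on $Q^\pm,S^\pm$ does go through, and the positivity argument pinning the scalar to $1$ is sound because $\rho$ is anti-unitary for $\langle-|-\rangle$) and to verify that the transported product on $\ker\Delta$ --- harmonic projection of the ambient product --- is compatible with the filtration, the form and the conjugation. These verifications all succeed (e.g.\ the harmonic projection preserves $\mathfrak{R}$-degree because $\Delta$ is $\mathfrak{R}$-homogeneous of degree zero, and $[z]=[P(z)]$ for $z\in\ker\bQ^-$ because ${\rm im}\,\overline{\bQ}_-\cap\ker\bQ^-=0$), but they are precisely the bookkeeping the paper's symmetric quotient is designed to avoid. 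The trade-off: your route never needs the $\bQ^-\bQ^+$ lemma or Proposition \ref{prop:symmquot} for this theorem, while the paper's route gets those statements anyway (it needs them for Sections \ref{sec:automorphisms} and \ref{sec:formality}) and in exchange makes the vertex-algebra axioms and $\rho$-equivariance automatic rather than transported.
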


The immediate complication in deducing this result is that the conjugation $\rho$ does not commute with $\bQ^-$, instead satisfying
\[
    \rho \circ \bQ^- = \ii \bQ^+ \circ \rho~.
\]
We circumvent this issue by way of the following proposition.

\begin{prop}\label{prop:symmquot}
    Let $\CV_{\rm matter}$ be a vertex algebra with a Hamiltonian $\wh{\fg}_{-2h^\vee}$ action such that $C(\wh{\fg}_{-2h^\vee}, \fg, \CV_{\rm matter})$ satisfies the $\bQ^- \bQ^+$ lemma (\emph{e.g.}, if $\CV_{\rm matter}$ is a weak graded-unitary vertex algebra and the action is good). Then there is an isomorphism of vertex algebras
    \[
		H^{\scriptstyle{\frac{\infty}{2}} + \bullet}(\wh{\fg}_{-2h^\vee}, \fg, \CV_{\rm matter}) \coloneqq \frac{\ker \bQ^-}{{\rm im} \bQ^-} \cong \frac{\ker \bQ^- \cap \ker \bQ^+}{{\rm im} \bQ^-\bQ^+}~.
	\]
\end{prop}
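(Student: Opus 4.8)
The plan is to show that the inclusion $\ker\bQ^-\cap\ker\bQ^+\hookrightarrow\ker\bQ^-$ descends to the asserted isomorphism on quotients; all of the non-formal input is the $\bQ^-\bQ^+$ lemma (Corollary~\ref{cor:ddc}), and the rest is bookkeeping with the two commuting odd differentials. Write $\CV\coloneqq C(\wh{\fg}_{-2h^\vee},\fg,\CV_{\rm matter})$ for brevity. First I would record the formal facts: since $\bQ^+$ and $\bQ^-$ are square-zero odd derivations of the vertex algebra with $\bQ^+\bQ^-=-\bQ^-\bQ^+$, the subspaces $\ker\bQ^-$ and $\ker\bQ^-\cap\ker\bQ^+$ are vertex subalgebras of $\CV$; every element of ${\rm im}\,\bQ^-\bQ^+$ is annihilated by both $\bQ^-$ and $\bQ^+$ (using $(\bQ^\pm)^2=0$) and lies in ${\rm im}\,\bQ^-$, so both quotients in the statement are defined and the inclusion induces a linear map from the right-hand side to $H^{\scriptstyle{\frac{\infty}{2}}+\bullet}(\wh{\fg}_{-2h^\vee},\fg,\CV_{\rm matter})$. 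That ${\rm im}\,\bQ^-\bQ^+$ is a vertex-algebra ideal of $\ker\bQ^-\cap\ker\bQ^+$ (so that the right-hand quotient is a vertex algebra and the induced map a homomorphism) follows from the Leibniz rule for the modes of $\bQ^\pm$: if $x$ is closed under both differentials and $w\in\CV$, then $x_n(\bQ^+w)=\pm\bQ^+(x_nw)$ and hence $x_n(\bQ^-\bQ^+w)=\pm\bQ^-\bQ^+(x_nw)$, with an analogous computation handling $(\bQ^-\bQ^+w)_nx$.

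The content is then bijectivity, which is exactly where Corollary~\ref{cor:ddc} enters. For surjectivity, given $x$ with $\bQ^-x=0$ the element $y\coloneqq\bQ^+x$ is $\bQ^+$-exact and is killed by both $\bQ^-$ (since $\bQ^-\bQ^+x=-\bQ^+\bQ^-x=0$) and $\bQ^+$ (since $(\bQ^+)^2=0$); Corollary~\ref{cor:ddc} then produces $w$ with $\bQ^+x=\bQ^-\bQ^+w=-\bQ^+\bQ^-w$, so $x'\coloneqq x+\bQ^-w$ lies in $\ker\bQ^-\cap\ker\bQ^+$ and represents the same class as $x$ in $H^{\scriptstyle{\frac{\infty}{2}}+\bullet}(\wh{\fg}_{-2h^\vee},\fg,\CV_{\rm matter})$. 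For injectivity, if $x\in\ker\bQ^-\cap\ker\bQ^+$ is $\bQ^-$-exact then $x$ already satisfies the hypotheses of Corollary~\ref{cor:ddc}, so $x=\bQ^-\bQ^+\wt x$ for some $\wt x$ and $x$ is zero in the right-hand quotient. Since a bijective vertex-algebra homomorphism is an isomorphism, this gives the claim.

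I do not anticipate a genuine obstacle beyond bookkeeping. The one place requiring care is tracking the Koszul signs coming from the odd parities of $\bQ^\pm$ and of the states when verifying that ${\rm im}\,\bQ^-\bQ^+$ is an ideal, so that the bijection above is genuinely an isomorphism of vertex algebras and not merely of $H^{\scriptstyle{\frac{\infty}{2}}+\bullet}(\wh{\fg}_{-2h^\vee},\fg,\CV_{\rm matter})$ as a graded vector space; this mirrors the classical derivation, from the $\pd\opd$-lemma, of the description of the de Rham cohomology of a compact K\"ahler manifold in terms of $\pd\opd$-closed forms.
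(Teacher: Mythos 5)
Your proof is correct and follows essentially the same route as the paper: the inclusion $\ker\bQ^-\cap\ker\bQ^+\hookrightarrow\ker\bQ^-$ induces the map, injectivity follows by applying the $\bQ^-\bQ^+$ lemma directly to a doubly-closed, $\bQ^-$-exact representative, and surjectivity by applying it to $\bQ^+x$ and correcting $x$ by $\bQ^-w$. The extra bookkeeping you include (that ${\rm im}\,\bQ^-\bQ^+$ is an ideal in $\ker\bQ^-\cap\ker\bQ^+$ so the bijection is a vertex algebra isomorphism) is left implicit in the paper but is a welcome addition.
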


\begin{proof}
    This result follows from the $\bQ^- \bQ^+$ lemma by showing that the inclusion
    \[
        \ker \bQ^- \cap \ker \bQ^+ \hookrightarrow \ker \bQ^-
    \]
    induces the desired isomorphism. We start with injectivity; choose an element $x$ with $\bQ^\pm x = 0$ representing a class in the source and suppose its image vanishes, \emph{i.e.} $x$ is $\bQ^-$-exact. The $\bQ^- \bQ^+$ lemma then implies $x = \bQ^- \bQ^+ \wt{x}$, hence $x$ represents the zero class and the induced map is injective. To show it is surjective, choose $[x]$ in $\bQ^-$-cohomology represented by the $\bQ^-$-closed element $x$. The $\bQ^- \bQ^+$ lemma implies the existence of $\wt{x}$ with $\bQ^+ x = \bQ^- \bQ^+ \wt{x}$ and hence $x' = x + \bQ^- \wt{x}$ satisfies $\bQ^\pm x' = 0$ and $[x'] = [x]$, therefore the induced map is surjective.
\end{proof}

We prove Theorem \ref{thm:gradedunitaryBRST} by equipping this alternative quotient with the structure of a weak graded-unitary vertex algebra. For the sake of brevity we will denote this quotient by $\wh{\CV}$ and write
\[
    [\![x]\!] \coloneqq x + {\rm im}\bQ^- \bQ^+~
\]
for $x \in \ker \bQ^- \cap \ker\bQ^+$. There is an obvious half-integral filtration on $\wh{\CV}$, which is good if the filtration on $\CV_{\rm matter}$ is good:
\[
    \wh{\mathfrak{R}}_R \wh{\CV} = \left\{[\![x]\!] ~ \big| ~ x \in \ker \bQ^- \cap \ker \bQ^+\cap\mathfrak{R}_R C(\wh{\fg}_{-2h^\vee}, \fg, \CV_{\rm matter})\right\}~.
\]
The invariant bilinear form on $C(\wh{\fg}_{-2h^\vee}, \fg, \CV_{\rm matter})$ induces an invariant bilinear form on $\wh{\CV}$, which we also denote by $(-,-)$, where
\[
    ([\![x]\!], [\![y]\!]) = (x,y)~.
\]

We then have the following lemma.

\begin{lem}
    $\wh{\mathfrak{R}}_\bullet$ is a nondegenerate half-integral filtration on $\wh{\CV}$. Moreover, it is good if the filtration $\mathfrak{R}_\bullet$ is good.
\end{lem}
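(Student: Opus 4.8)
The plan is to check the axioms of a (half\-/integral) filtration in turn, deducing the elementary ones from the corresponding properties of $\mathfrak{R}_\bullet$ on $C\coloneqq C(\wh{\fg}_{-2h^\vee},\fg,\CV_{\rm matter})$ and reserving the real work for nondegeneracy. Write $K\coloneqq\ker\bQ^-\cap\ker\bQ^+$ and $I\coloneqq{\rm im}\,\bQ^-\bQ^+$, so $\wh{\CV}=K/I$; recall from the proof of Proposition \ref{prop:symmquot} that $K$ is a vertex subalgebra of $C$ (an intersection of kernels of derivations) and $I$ a vertex ideal of $K$, and that $\wh{\CV}$ inherits its $\Z_2$\-/grading from the $s$\-/action on $C$. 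Since $|0\rangle\in K\cap\mathfrak{R}_0C$ and $\mathfrak{R}_\bullet C$ is increasing and exhaustive, $\wh{\mathfrak{R}}_\bullet$ is an increasing exhaustive filtration with $[\![|0\rangle]\!]\in\wh{\mathfrak{R}}_0\wh{\CV}$ compatible with $s$; and for $x\in K\cap\mathfrak{R}_{p+\alpha}C$, $y\in K\cap\mathfrak{R}_{q+\beta}C$ the element $x_ny$ represents $[\![x]\!]_n[\![y]\!]$ and lies in $K\cap\mathfrak{R}_{p+q+\alpha+\beta}C$, which gives the mode\-/closure property; if $\mathfrak{R}_\bullet$ is good then $x_ny\in K\cap\mathfrak{R}_{p+q+\alpha+\beta-1}C$ for $n\geqslant0$, which gives the goodness of $\wh{\mathfrak{R}}_\bullet$.

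For nondegeneracy I would first record that $\bQ^\pm$ is skew\-/adjoint for the invariant bilinear form, $(\bQ^\pm u,v)=-(-1)^{|u|}(u,\bQ^\pm v)$: since $C$ is of strong CFT type (Proposition \ref{prop:strong}, as $\CV_{\rm matter}$ is), the form is the two\-/point\-/function pairing of Remark \ref{rmk:strongpairing}, and $\bQ^\pm$ is an odd derivation annihilating $|0\rangle$ and commuting with $L_{\pm1,0}$ (annihilating $\omega$ when $\CV_{\rm matter}$ is a VOA), so it integrates by parts inside that pairing. Working in each (finite\-/dimensional, $\bQ^\pm$\-/stable) conformal\-/weight subspace, this gives $(\ker\bQ^-)^{\perp}={\rm im}\,\bQ^-$, so the bilinear form descends to a nondegenerate form on $H(C,\bQ^-)=\ker\bQ^-/{\rm im}\,\bQ^-$; via the form\-/compatible isomorphism $\wh{\CV}\cong H(C,\bQ^-)$ of Proposition \ref{prop:symmquot} this yields a nondegenerate form on $\wh{\CV}$. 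Equivalently, by the quartet decomposition $K=\ker\Delta\oplus I$ (Proposition \ref{prop:quartet}) the harmonic inclusion induces an isomorphism $\ker\Delta\overset{\sim}{\longrightarrow}\wh{\CV}$, $h\mapsto[\![h]\!]$, under which the form on $\wh{\CV}$ pulls back to $(-,-)|_{\ker\Delta}$, and the latter is nondegenerate.

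To promote this to nondegeneracy on every filtered piece $\wh{\mathfrak{R}}_R\wh{\CV}$, the key point is that, although neither $\bQ^\pm$ nor $K$ is $\mathfrak{R}$\-/homogeneous, the Laplacian is: from the decomposition $\bQ^\pm=Q^\pm+S^\pm$ (with $Q^\pm,S^\pm$ of $\mathfrak{R}$\-/degree $\pm\tfrac12$) and the adjoint relations established above one computes $\Delta=-2\epsilon_{+-}[Q^-,S^+]$, which is $\mathfrak{R}$\-/homogeneous of degree $0$. Hence the harmonic projector $\mathcal{H}$ (a spectral projection of $\Delta$) preserves each $\mathfrak{R}_RC$, and $\ker\Delta$ is an $\mathfrak{R}$\-/graded subspace of $C$. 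Then $\wh{\mathfrak{R}}_R\wh{\CV}$ corresponds under $\ker\Delta\cong\wh{\CV}$ to $\ker\Delta\cap\mathfrak{R}_RC$: the inclusion ``$\supseteq$'' is immediate, and for ``$\subseteq$'' one takes $h\in\ker\Delta$ with $[\![h]\!]\in\wh{\mathfrak{R}}_R\wh{\CV}$, picks $x\in K\cap\mathfrak{R}_RC$ with $h-x\in I\subseteq{\rm im}\,\bQ^-\subseteq{\rm im}\,\Delta=\ker\mathcal{H}$, and applies $\mathcal{H}$ to get $h=\mathcal{H}(h)=\mathcal{H}(x)\in\mathcal{H}(\mathfrak{R}_RC)\subseteq\mathfrak{R}_RC$. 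Finally, since $\ker\Delta$ is $\mathfrak{R}$\-/graded and the $\mathfrak{R}$\-/graded pieces of $C$ are pairwise $(-,-)$\-/orthogonal, $(-,-)|_{\ker\Delta}$ is block\-/diagonal along the $\mathfrak{R}$\-/grading; its nondegeneracy forces nondegeneracy on each block, hence on every partial sum $\ker\Delta\cap\mathfrak{R}_RC$, which is precisely $(-,-)|_{\wh{\mathfrak{R}}_R\wh{\CV}}$.

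The main obstacle is exactly this last reduction: because $\bQ^\pm$ mixes $\mathfrak{R}$\-/degrees by $\pm\tfrac12$, the filtered pieces of $\wh{\CV}$ do not split into graded pieces at the level of cochains, so nondegeneracy on $\wh{\CV}$ does not by itself imply nondegeneracy on each $\wh{\mathfrak{R}}_R\wh{\CV}$. Passing to the harmonic model resolves this, since the operator $\Delta$ governing that model --- unlike the differentials --- is $\mathfrak{R}$\-/homogeneous of degree $0$ and its harmonic projector is filtration\-/preserving, so the two filtrations become manifestly compatible. A subsidiary point requiring care is the skew\-/adjointness of $\bQ^\pm$ for $(-,-)$, which is where strong\-/CFT\-/type\-/ness of $C$ (hence of $\CV_{\rm matter}$, by Proposition \ref{prop:strong}) enters.
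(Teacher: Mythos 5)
Your proof is correct, and its core mechanism is the same as the paper's: both reduce nondegeneracy to the harmonic model $\ker\Delta\cong\wh{\CV}$ and then invoke the (positive-definite) structure on $C(\wh{\fg}_{-2h^\vee},\fg,\CV_{\rm matter})$. The paper's own argument is much shorter --- it simply takes the harmonic representative $x'$ of a nonzero class, notes $0\neq\langle x'|x'\rangle=((\sigma\circ\rho)(x'),x')$, and observes that $(\sigma\circ\rho)(x')$ is again harmonic, hence represents a class pairing nontrivially with $[\![x]\!]$ --- whereas you reach nondegeneracy on all of $\wh{\CV}$ via skew-adjointness of $\bQ^\pm$ and $(\ker\bQ^-)^\perp={\rm im}\,\bQ^-$. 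One caution on that subsidiary step: the ``integration by parts'' of $\bQ^\pm$ against the two-point pairing requires $[L_1,\bQ^\pm]=0$ (equivalently that the relevant weight-one current is quasi-primary, or an argument via the derivation property together with $\langle 0|\circ\bQ^\pm=0$), which you assert rather than verify; your parallel harmonic-model argument sidesteps this, so nothing breaks. The genuinely valuable addition in your write-up is the treatment of nondegeneracy \emph{on each filtered component}: you observe that $\Delta=-2[Q^-,S^+]$ is $\mathfrak{R}$-homogeneous of degree zero, so the harmonic projector preserves the filtration, $\ker\Delta$ is $\mathfrak{R}$-graded, and $\wh{\mathfrak{R}}_R\wh{\CV}$ is identified with $\ker\Delta\cap\mathfrak{R}_R C$, whence block-diagonality of the form gives nondegeneracy on every filtered piece. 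The paper's proof leaves this point implicit (it only exhibits a dual vector in $\wh{\CV}$, relying on $\sigma\circ\rho$ preserving $\mathfrak{R}_R$ and on the harmonic representative of a filtered class being filtered, the latter being exactly what you prove); your argument makes that step explicit and is worth keeping.
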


\begin{proof}
    That $\wh{\mathfrak{R}}_\bullet$ defines a half-integral filtration on $\wh{\CV}$ is immediate from the fact that $\mathfrak{R}_\bullet$ is a filtration, and similarly that it is good if $\mathfrak{R}_\bullet$ is good. We are thus left to establish nondegeneracy.

    Choose $[\![x]\!] \in \wh{\CV}$ nonzero. We can represent such a coset by its harmonic representative $x'$ and get
    \[
        0 \neq \langle x' | x' \rangle = ((\sigma \circ \rho)(x'), x')~.
    \]
    As $y \coloneqq (\sigma \circ \rho)(x')$ is also harmonic, and thus belongs to $\ker \bQ^+ \cap \ker \bQ^-$, we conclude that the pairing $([\![y]\!], [\![x']\!]) = ([\![y]\!], [\![x]\!])$ is nonzero so that the pairing is nondegenerate.
\end{proof}

\begin{rmk}
    The nondegeneracy of $\wh{\mathfrak{R}}_\bullet$ ensures that it can be refined to a grading on the underlying vector space. The graded subspaces are given by
    \[
        \wh{\CV}_{h, R} = \left\{ [\![x]\!] ~ \big| ~ x = x' + \bQ^- \bQ^+ \wt{x}~, \quad x' \in C(\wh{\fg}_{-2h^\vee}, \fg, \CV_{\rm matter})_{h, R} \right\}~.
    \]
    Of course, if $x'$ is annihilated by $\bQ^+$ and $\bQ^-$ and has definite $\mathfrak{R}$-degree then it is annihilated by all four of $Q^\pm$, $S^\pm$, \emph{i.e.}, it is harmonic. In other words, $[\![x]\!]$ belongs to the $R^{\rm th}$ graded component of $\wh{\CV}$ if and only if it can be represented by a harmonic element in the $R^{\rm th}$ graded component of $C(\wh{\fg}_{-2h^\vee}, \fg, \CV_{\rm matter})$.
\end{rmk}

As $\rho$ preserves both $\ker \bQ^- \cap \ker \bQ^+$ and ${\rm im}\bQ^- \bQ^+$, we also have a natural quaternionic structure,
\[
    \wh{\rho} : [\![x]\!] \mapsto [\![\rho(x)]\!]~.
\]
That this defines a quaternionic structure on $(\wh{\CV}, (-,-), \wh{\mathfrak{R}}_\bullet)$ follows immediately from the fact that $\rho$ defines a quaternionic structure on $(C(\wh{\fg}_{-2h^\vee}, \fg, \CV_{\rm matter}), \mathfrak{R}_\bullet)$. Theorem \ref{thm:gradedunitaryBRST} then follows from the following lemma.

\begin{lem}
    Taking $(\wh{\CV},(-,-),\wh{\mathfrak{R}}_\bullet,\wh{\rho})$ as above, the corresponding sesquilinear form defined in Corollary \ref{cor:Hermitianform} is positive-definite.
\end{lem}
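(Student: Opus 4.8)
The plan is to transport positivity from $C(\wh{\fg}_{-2h^\vee},\fg,\CV_{\rm matter})$ — which is a weak graded-unitary vertex algebra by Theorem~\ref{thm:BRSTKahlerpackage}, hence carries a positive-definite Hermitian form $\langle-|-\rangle$ — onto $\wh\CV$, by representing each class by its $\Delta$-harmonic representative.

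First I would record that every class $[\![x]\!]\in\wh\CV$ has a unique harmonic representative, obtained by orthogonal projection. Working weight-by-weight (the subspaces of fixed conformal weight being finite-dimensional and preserved by $\bQ^\pm$, $\overline\bQ_\pm$, and $\Delta$), Proposition~\ref{prop:quartet} gives an orthogonal decomposition into $\ker\Delta$ and quartets; inside each quartet, $\ker\bQ^-\cap\ker\bQ^+$ is precisely the line spanned by the bottom vector $\bQ^-\bQ^+\wt x$, which lies in ${\rm im}\,\bQ^-\bQ^+$. Hence $\ker\bQ^-\cap\ker\bQ^+ = \ker\Delta\oplus{\rm im}(\bQ^-\bQ^+)$, an orthogonal direct sum for the positive-definite Hilbert inner product on $C(\wh{\fg}_{-2h^\vee},\fg,\CV_{\rm matter})$, so the composite $\ker\Delta\hookrightarrow\ker\bQ^-\cap\ker\bQ^+\twoheadrightarrow\wh\CV$ is an isomorphism. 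In particular a nonzero class has a nonzero harmonic representative, and a harmonic vector that is $\bQ^-\bQ^+$-exact vanishes.

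Next I would check that $\sigma$ and $\wh\rho$ respect harmonic representatives. Since $[\bQ^+,\bQ^-]=0$ and $[S^+,S^-]=0$, the $\mathfrak R$-degree $\pm1$ components of $\Delta=[\bQ^-,\overline\bQ_-]$ vanish, so $\Delta$ is $\mathfrak R$-homogeneous of degree $0$; thus $\ker\Delta$ is spanned by harmonic vectors of definite $\mathfrak R$-degree and is preserved by $\sigma$. Because $\rho$ intertwines $\bQ^-$ with $\bQ^+$ up to a unit scalar and preserves $\mathfrak R$-degree, it preserves $\ker\bQ^-\cap\ker\bQ^+$ and hence $\ker\Delta$. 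Therefore, if $x'$ denotes the harmonic representative of a homogeneous nonzero class $[\![x]\!]\in\wh\CV_{h,R}$, then $(\sigma\circ\rho)(x')$ is harmonic of $\mathfrak R$-degree $R$, hence is the harmonic representative of $(\sigma\circ\wh\rho)([\![x]\!])$, and unwinding the definition of $(-,-)$ on $\wh\CV$ gives
\[
  \langle[\![x]\!]\,|\,[\![x]\!]\rangle=\big((\sigma\circ\wh\rho)([\![x]\!]),[\![x]\!]\big)=\big((\sigma\circ\rho)(x'),x'\big)=\langle x'\,|\,x'\rangle>0 ,
\]
the last inequality by positive-definiteness of $\langle-|-\rangle$ on $C(\wh{\fg}_{-2h^\vee},\fg,\CV_{\rm matter})$ together with $x'\neq0$. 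Since the bilinear form on $\wh\CV$ pairs distinct graded components $\wh\CV_{h,R}$ to zero, positivity on each component yields positive-definiteness of $\langle-|-\rangle$ on all of $\wh\CV$. This proves the Lemma, and with Proposition~\ref{prop:symmquot} completes the proof of Theorem~\ref{thm:gradedunitaryBRST}.

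The main obstacle, such as it is, is the bookkeeping of the first two steps: one must be sure the weight-by-weight quartet decomposition of Proposition~\ref{prop:quartet} is compatible both with the $\mathfrak R$-grading and with the quaternionic structure $\rho$, so that $\sigma$ and $\wh\rho$ act on $\wh\CV$ exactly as the grading $\wh\CV_{h,R}$ predicts and harmonic representatives can be tracked through the computation. Once this is in place the positivity is immediate, and no analytic input beyond finite-dimensional Hodge theory on each graded piece is required.
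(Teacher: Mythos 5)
Your proposal is correct and follows essentially the same route as the paper: represent each graded component of a class by its $\Delta$-harmonic representative (which the paper justifies via the remark preceding the lemma, noting that a vector of definite $\mathfrak{R}$-degree killed by $\bQ^\pm$ is killed by all of $Q^\pm, S^\pm$) and transport positivity of $\langle-|-\rangle$ from $C(\wh{\fg}_{-2h^\vee},\fg,\CV_{\rm matter})$. Your extra care in checking that $\sigma\circ\rho$ preserves harmonicity, and that a nonzero class has a nonzero harmonic representative, only makes explicit what the paper's terser computation uses implicitly.
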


\begin{proof}
    Choose $[\![x]\!] \in \wh{\CV}_{h}$ and let
    \[
        [\![x]\!] = \sum_{R} [\![x_R]\!]~, \qquad x_R = x_R' + \bQ^- \bQ^+ \wt{x}_R~, \quad x'_R \in C(\wh{\fg}_{-2h^\vee}, \fg, \CV_{\rm matter})_{h, R}
    \]
    be its decomposition into graded components. Then we have
    \[
    \begin{aligned}
        \langle [\![x]\!] | [\![x]\!] \rangle & = ((\wh{\sigma} \circ \wh{\rho})([\![x]\!]), [\![x]\!]) = \sum_{R} ((\wh{\sigma} \circ \wh{\rho})([\![x_R']\!]), [\![x_R']\!])\\
        & = \sum_{R} ((\sigma \circ \rho)(x'_R), x'_R) = \sum_R \langle x'_R | x'_R \rangle \geqslant 0~,
    \end{aligned}
    \]
    as desired.
\end{proof}

\subsection{\label{sec:automorphisms}Outer automorphisms from the $\bQ^-\bQ^+$ lemma}

We saw in Section \ref{sec:BRSTreview} that the vertex algebra underlying the relative semi-infinite cohomology admits has an interesting outer automorphism group: there is an ${\rm USp}(2)$ automorphism that rotates the symplectic fermions $\eta^{+,A}$, $\eta^{-,A}$ into one another. As this action rotates the differential $\bQ^-$ as a doublet with $\bQ^+$, the cohomology does not necessarily inherit this automorphism: only the torus acting on $\eta^{\pm, A}$ with weight $\mp 1$ preserves the kernel and image of $\bQ^-$. Indeed, assuming $\CV_{\rm matter}$ is concentrated in degree zero, the weights of this ${\rm U}(1)$ action are precisely the cohomological grading on $\bQ^-$ cohomology.

Nonetheless, the cohomologies of $\bQ^-$ and $\bQ^+$ are isomorphic to one another, so there is a chance that some vestige of the automorphism group will give rise to an action on $\bQ^-$ cohomology. We now show that this is indeed the case whenever the $\bQ^-\bQ^+$ lemma holds.

\begin{prop}\label{prop:automorphisms}
    Let $\CV_{\rm matter}$ be a vertex algebra with a Hamiltonian $\wh{\fg}_{-2h^\vee}$ action such that $C(\wh{\fg}_{-2h^\vee}, \fg, \CV_{\rm matter})$ satisfies the $\bQ^- \bQ^+$ lemma (\emph{e.g.}, if $\CV_{\rm matter}$ is a weak graded-unitary vertex algebra and the action is good). Then the ${\rm U}(1)$ action on relative semi-infinite cohomology with weight spaces $H^{\scriptstyle{\frac{\infty}{2}}+d}(\wh{\fg}_{-2h^\vee}, \fg, \CV_{\rm matter})$ extends to an action of ${\rm USp}(2)$.
\end{prop}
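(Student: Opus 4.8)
The plan is to move the problem to the symmetric model of the cohomology furnished by the $\bQ^-\bQ^+$ lemma. By Proposition \ref{prop:symmquot}, the hypothesis yields an isomorphism of vertex algebras
\[
    H^{\scriptstyle{\frac{\infty}{2}}+\bullet}(\wh{\fg}_{-2h^\vee}, \fg, \CV_{\rm matter}) \;\cong\; \wh{\CV} \;\coloneqq\; \frac{\ker \bQ^- \cap \ker \bQ^+}{{\rm im}\,\bQ^-\bQ^+}~,
\]
so it is enough to produce a ${\rm USp}(2)$ action on $\wh{\CV}$ by vertex algebra automorphisms whose restriction to the maximal torus generated by $\Pi$ is the ${\rm U}(1)$ of the statement. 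The ``Lefschetz'' $\fsl(2)$ triple $\{\Pi, L, \Lambda\}$ of Remark \ref{rmk:sl2} acts on $C(\wh{\fg}_{-2h^\vee}, \fg, \CV_{\rm matter})$ by derivations of the vertex algebra structure (integrating to the ${\rm USp}(2)$ action recalled in Section \ref{sec:Kahler}), but $L$ and $\Lambda$ do not preserve $\ker\bQ^-$, which is precisely why the $\fsl(2)$ fails to descend to $\bQ^-$-cohomology directly. The key observation I would make is that $\{\Pi, L, \Lambda\}$ nevertheless preserves \emph{both} the numerator $\ker\bQ^-\cap\ker\bQ^+$ and the denominator ${\rm im}\,\bQ^-\bQ^+$ of $\wh{\CV}$.

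This I would check directly from the relations of Eq. \eqref{eq:Lefschetz1}, namely $[L,\bQ^-]=0$, $[L,\bQ^+]=\bQ^-$, $[\Lambda,\bQ^+]=0$, $[\Lambda,\bQ^-]=\bQ^+$, and $[\Pi,\bQ^\pm]=\mp\bQ^\pm$. If $\bQ^-x=\bQ^+x=0$ then $\bQ^-(Lx)=L\bQ^-x=0$ and $\bQ^+(Lx)=L\bQ^+x-\bQ^-x=0$, and the same manipulations show $\Lambda x$ and $\Pi x$ again lie in $\ker\bQ^-\cap\ker\bQ^+$; hence the numerator is preserved. If $x=\bQ^-\bQ^+y$ then, using $[L,\bQ^-]=0$, $[L,\bQ^+]=\bQ^-$ and $(\bQ^-)^2=0$,
\[
    Lx \;=\; \bQ^-L\bQ^+y \;=\; \bQ^-\big(\bQ^-y+\bQ^+Ly\big) \;=\; \bQ^-\bQ^+(Ly)\;\in\;{\rm im}\,\bQ^-\bQ^+~,
\]
and analogously $\Lambda x=\bQ^-\bQ^+(\Lambda y)$ and $\Pi x=\bQ^-\bQ^+(\Pi y)$; hence the denominator is preserved. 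Thus $\{\Pi,L,\Lambda\}$ descends to an $\fsl(2)$ acting by derivations of the vertex algebra $\wh{\CV}$, equivalently—transporting along the isomorphism above—of $H^{\scriptstyle{\frac{\infty}{2}}+\bullet}(\wh{\fg}_{-2h^\vee}, \fg, \CV_{\rm matter})$.

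To conclude I would upgrade the descended Lie algebra action to a group action. The ${\rm USp}(2)$ action on $C(\wh{\fg}_{-2h^\vee}, \fg, \CV_{\rm matter})$ acts only on the symplectic-fermion factor and preserves a grading with finite-dimensional graded pieces (e.g.\ by total symplectic-fermion number together with conformal weight), so it is locally finite; combined with the invariances just established, this forces $\ker\bQ^-\cap\ker\bQ^+$ and ${\rm im}\,\bQ^-\bQ^+$ to be ${\rm USp}(2)$-stable, so ${\rm USp}(2)$ descends to an action on $\wh{\CV}\cong H^{\scriptstyle{\frac{\infty}{2}}+\bullet}(\wh{\fg}_{-2h^\vee}, \fg, \CV_{\rm matter})$ by vertex algebra automorphisms, and restricting to the one-parameter subgroup generated by $\Pi$ recovers the ${\rm U}(1)$ whose graded subspaces are the $H^{\scriptstyle{\frac{\infty}{2}}+d}(\wh{\fg}_{-2h^\vee}, \fg, \CV_{\rm matter})$. (When $\CV_{\rm matter}$ is in addition weak graded unitary, one can further note that $\Delta$ is ${\rm USp}(2)$-invariant, so this action permutes $\Delta$-harmonic representatives and is unitary for the Hermitian form of Theorem \ref{thm:gradedunitaryBRST}; it is exactly the compact real form inside the ${\rm Sp}(2,\C)$ of Remark \ref{rmk:sl2}.) I expect the only genuine subtlety to be this passage through the symmetric model: ${\rm USp}(2)$ simply does not act on $\bQ^-$-cohomology on the nose, and it is the $\bQ^-\bQ^+$ lemma (Corollary \ref{cor:ddc}) that makes the numerator and denominator of a presentation of the cohomology simultaneously invariant.
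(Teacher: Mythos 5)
Your proposal is correct and follows the same route as the paper: the paper's proof consists of the single observation that Proposition \ref{prop:symmquot} gives a manifestly ${\rm USp}(2)$-invariant presentation $\wh{\CV}=(\ker\bQ^-\cap\ker\bQ^+)/{\rm im}\,\bQ^-\bQ^+$ of the cohomology. You simply make the word ``manifestly'' explicit by verifying from Eq. \eqref{eq:Lefschetz1} that the Lefschetz $\fsl(2)$ preserves both the numerator and the denominator, which is a correct and welcome elaboration rather than a different argument.
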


\begin{rmk}
    In light of the formal analogy with K\"{a}hler geometry described in Section \ref{sec:Kahler}, the fact that $\fsl(2)$ acts on the cohomology of, say, $\bQ^-$ follows from the K\"{a}hler-like identities in Eqs. \eqref{eq:Lefschetz1} and \eqref{eq:Lefschetz2} together with the Hodge decomposition. That said, this does not guarantee that it is acts in a way compatible with the vertex algebra structure, \emph{i.e.}, by vertex algebra automorphisms.
\end{rmk}

\begin{proof}
    The assertion follows from Proposition \ref{prop:symmquot}, which provides a manifestly ${\rm USp}(2)$-invariant description of $H^{\scriptstyle{\frac{\infty}{2}} + \bullet}(\wh{\fg}_{-2h^\vee}, \fg, \CV_{\rm matter}) \cong \wh{\CV}$. 
\end{proof}

\begin{physrmk}
    The appearance of an ${\rm USp}(2)$ outer automorphism on relative semi-infinite cohomology is quite surprising from the perspective of four-dimensional physics. The cohomological grading is interpreted physically as weights of the ${\rm U}(1)_r$ $R$-symmetry of the physical theory and we do not know of a physical reason why this should have a non-abelian enhancement, though upon reduction to three dimensions it seems plausible that the enhancement could be related to the ${\rm SU}(2)_C$ enhancement of ${\rm U}(1)_r$ (\emph{cf.} footnote \ref{footnote:outer_speculations}).
\end{physrmk}

\subsection{\label{sec:formality}Formality from the $\bQ^- \bQ^+$ lemma}

As in the analysis of the de Rham cohomology of compact K\"{a}hler manifolds in \cite{DGMS}, formality of the relative semi-infinite cohomology complex follows as a consequence of the $\bQ^- \bQ^+$ lemma.

\begin{thm}\label{thm:formality}
    Let $\CV_{\rm matter}$ be a vertex algebra equipped with a Hamiltonian $\wh{\fg}_{-2h^\vee}$ action such that $\CV \coloneqq C(\wh{\fg}_{-2h^\vee}, \fg, \CV_{\rm matter})$ satisfies the $\bQ^- \bQ^+$-lemma (\emph{e.g.}, if $\CV_{\rm matter}$ is a weak graded-unitary vertex algebra and the action is good). The maps
    \[
        (\CV, \bQ^-) \overset{i}{\longleftarrow} (\ker \bQ^+, \bQ^-) \overset{\pi}{\longrightarrow} (H(\CV, \bQ^+), \bQ^-{}^*)~,
    \]
    where $i$ is the inclusion of the subcomplex $(\ker \bQ^+, \bQ^-)$ and $\pi$ is the projection onto $\bQ^+$-cohomology, are quasi-isomorphisms.
\end{thm}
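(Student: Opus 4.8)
The plan is to replay the Deligne--Griffiths--Morgan--Sullivan formality argument \cite[\S 6]{DGMS}, with the pair of anticommuting odd derivations $(\bQ^-,\bQ^+)$ of the vertex algebra $\CV=C(\wh{\fg}_{-2h^\vee},\fg,\CV_{\rm matter})$ playing the role of $(d,d^c)$; the only genuinely new point is that all objects in sight are vertex algebras and all the relevant maps are vertex-algebra morphisms. First I would record the algebraic structures. Since $\bQ^+$ is a derivation of $\CV$, its kernel $\ker\bQ^+$ is a sub-vertex-algebra; it contains the vacuum (and, in the VOA case, the conformal vector), it is closed under $\partial$, and it is preserved by $\bQ^-$ because the two differentials anticommute, $\bQ^+\bQ^-=-\bQ^-\bQ^+$ (Eq.~\eqref{eq:Hodge}). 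Thus $(\ker\bQ^+,\bQ^-)$ is a sub-DG-vertex-algebra of $(\CV,\bQ^-)$ and $i$ is a morphism of such. Likewise $\operatorname{im}\bQ^+$ is a $\bQ^-$-stable vertex-algebra ideal of $\ker\bQ^+$ --- for $x=\bQ^+a$ and $y\in\ker\bQ^+$ one has $x_ny=\bQ^+(a_ny)$ --- so $H(\CV,\bQ^+)=\ker\bQ^+/\operatorname{im}\bQ^+$ is a vertex algebra and $\bQ^-$ descends to a derivation $\bQ^-{}^*$ of it, with $\pi$ a vertex-algebra morphism intertwining $\bQ^-$ and $\bQ^-{}^*$. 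The first nontrivial observation is that $\bQ^-{}^*=0$: for $x\in\ker\bQ^+$ the element $\bQ^-x$ is $\bQ^-$-exact, $\bQ^-$-closed, and $\bQ^+$-closed (the last since $\bQ^+\bQ^-x=-\bQ^-\bQ^+x=0$), so the $\bQ^-\bQ^+$ lemma (Corollary~\ref{cor:ddc}) gives $\bQ^-x=\bQ^-\bQ^+\wt{x}=-\bQ^+\bQ^-\wt{x}\in\operatorname{im}\bQ^+$; hence $[\bQ^-x]=0$ in $\bQ^+$-cohomology, and the target is genuinely $(H(\CV,\bQ^+),0)$.

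Next I would show $i$ induces an isomorphism on $\bQ^-$-cohomology. For surjectivity, given $x$ with $\bQ^-x=0$, the element $\bQ^+x$ is $\bQ^+$-exact, $\bQ^+$-closed, and $\bQ^-$-closed, so the $\bQ^-\bQ^+$ lemma gives $\bQ^+x=\bQ^-\bQ^+\wt{x}$; then $x':=x+\bQ^-\wt{x}$ is $\bQ^-$-cohomologous to $x$, is $\bQ^-$-closed, and satisfies $\bQ^+x'=\bQ^+x+\bQ^+\bQ^-\wt{x}=\bQ^-\bQ^+\wt{x}-\bQ^-\bQ^+\wt{x}=0$, so it represents the class inside $\ker\bQ^+$. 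For injectivity, if $x\in\ker\bQ^+\cap\ker\bQ^-$ is $\bQ^-$-exact in $\CV$, the $\bQ^-\bQ^+$ lemma gives $x=\bQ^-\bQ^+\wt{x}=\bQ^-(\bQ^+\wt{x})$ with $\bQ^+\wt{x}\in\ker\bQ^+$, so $x$ is already $\bQ^-$-exact inside $\ker\bQ^+$. The argument that $\pi$ is a quasi-isomorphism is parallel: the induced map on cohomology sends $[x]_{\bQ^-}\mapsto[x]_{\bQ^+}$ for $x\in\ker\bQ^+\cap\ker\bQ^-$, and it is well defined by the same computation $x=\bQ^-\bQ^+\wt{x}\in\operatorname{im}\bQ^+$; for surjectivity one takes $y\in\ker\bQ^+$, applies the lemma to $\bQ^-y=\bQ^-\bQ^+\wt{x}$, and replaces $y$ by the $\bQ^+$-cohomologous, now $\bQ^-$-closed, representative $y-\bQ^+\wt{x}\in\ker\bQ^+$; and injectivity is again a single application of the lemma to a $\bQ^+$-exact element of $\ker\bQ^+\cap\ker\bQ^-$. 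Since the composite $\pi\circ i^{-1}$ (on cohomology) is then the canonical identification $H(\CV,\bQ^-)\cong H(\CV,\bQ^+)$, the chain of maps is the asserted zig-zag of quasi-isomorphisms of DG vertex algebras.

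I expect the main obstacle to be bookkeeping rather than conceptual. One must keep track of the Koszul signs coming from $\bQ^\pm$ being Grassmann-odd (used already above in $\bQ^+\bQ^-=-\bQ^-\bQ^+$ and in the derivation property $\bQ^\pm(x_ny)=(\bQ^\pm x)_ny+(-1)^{|x|}x_n(\bQ^\pm y)$), and one must check at each step that the correcting terms $\bQ^\pm\wt{x}$ lie in the relevant sub- or quotient vertex algebra and respect the conformal ($\tfrac12\Z$) grading and, when present, the internal cohomological grading, so that the resulting maps are morphisms of graded DG vertex algebras. No new homotopy-theoretic input beyond the $\bQ^-\bQ^+$ lemma is needed, and that lemma has already been established in Corollary~\ref{cor:ddc} as a consequence of the quartet decomposition (Proposition~\ref{prop:quartet}).
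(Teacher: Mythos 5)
Your proposal is correct and follows essentially the same route as the paper: the ``$d^c$-Diagram Method'' of \cite{DGMS}, applying the $\bQ^-\bQ^+$ lemma of Corollary~\ref{cor:ddc} first to show the induced differential on $H(\CV,\bQ^+)$ vanishes and then to establish injectivity and surjectivity of $i^*$ and $\pi^*$ by the same four arguments. The only (welcome) addition is that you spell out in the body of the proof why $\ker\bQ^+$ is a subalgebra and $\operatorname{im}\bQ^+$ a $\bQ^-$-stable ideal, which the paper relegates to a preceding remark.
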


\begin{rmk}
    In the statement of this proof and below, we use $m^*$ to denote the map on cohomology induced by a linear map $m$. This should not be confused with the adjoint with respect to the nondegenerate pairing $(-,-)$ as used in, \emph{e.g.}, Proposition \ref{prop:shortening}.
\end{rmk}

\begin{rmk}
    The maps appearing in Theorem \ref{thm:formality} are morphisms of DG vertex algebras. This follows from the fact that $\bQ^-$ and $\bQ^+$ (anti-)commute, that $\ker \bQ^+$ is a vertex subalgebra of $\CV$, and that $H(\CV, \bQ^+)$ is a quotient of $\ker \bQ^+$ by a vertex algebra ideal.
\end{rmk}

\begin{proof}
    The proof closely follows the ``${\rm d}^c$-Diagram Method'' of \cite{DGMS}, but we present the details here for completeness.
	
    The induced differential $\bQ^-{}^*$ acts trivially on $H(\CV, \bQ^+)$, because if a $\bQ^+$-closed $x$ represents such a cohomology class, we can apply the $\bQ^- \bQ^+$ lemma to $\bQ^- x$ to find an operator $\wt{x}$ with $\bQ^- x = \bQ^- \bQ^+ \wt{x} = \bQ^+ (-\bQ^- \wt{x})$, and hence $\bQ^- x$ vanishes in $H(\CV, \bQ^+)$. In particular, one may replace the right-most vertex algebra with $(H(\CV, \bQ^+), 0)$ without any loss.
	
    We now show that $i$ is a quasi-isomorphism. Choose $[x] \in H(\CV, \bQ^-)$ and consider $\bQ^+ x$. The operator $\bQ^+ x$ satisfies the hypothesis of the $\bQ^- \bQ^+$ lemma, so there exists $\wt{x}$ with $\bQ^+ \CO = \bQ^- \bQ^+ \wt{x}$. We find that $x' = x + \bQ^- \wt{x}$ is $\bQ^+$-closed and $\bQ^-$-cohomologous to $\CO$, thus $i^*$ is surjective. To show that $i^*$ is an injection, we consider a vector $x$ satisfying $\bQ^+ x = 0$ and $x = \bQ^- x'$ for some $x' \in \CV$; the key distinction being that $x'$ need not belong to $\ker \bQ^+$. However, since $x$ satisfies the hypotheses of the $\bQ^- \bQ^+$ lemma, we can write $x = \bQ^- \bQ^+ \wt{x}$. As $\bQ^+(\bQ^+ \wt{x}) = 0$ we conclude that $x$ is necessarily $\bQ^-$-exact in $\ker \bQ^+$.
	
    Finally, we prove that $\pi$ is a quasi-isomorphism. To check surjectivity, we suppose $x$ represents an element of $H(\CV, \bQ^+)$; we must find a $\bQ^-$-closed operator that is $\bQ^+$-cohomologous to $x$. It follows that $\bQ^- x$ satisfies the hypotheses of the $\bQ^- \bQ^+$ lemma, hence there is a vector $\wt{x}$ such that $\bQ^- x = \bQ^- \bQ^+ \wt{x}$. In particular, $x' = x + \bQ^+ \wt{x}$ is $\bQ^-$-closed and $\bQ^+$-cohomologous to $x$. To see that $\pi^*$ is injective, we consider a $\bQ^-$-closed element $x$ of $\ker \bQ^+$ and suppose $x = \bQ^+ \wh{x}$ for some $\wh{x} \in \CV$, \emph{i.e.}, it maps to zero in $H(\CV, \bQ^+)$. We see that $x$ satisfies the $\bQ^- \bQ^+$ lemma, and hence there exists $\wt{x}$ with $x = \bQ^- \bQ^+ \wt{x}$, therefore $x$ is $\bQ^-$-exact in $\ker \bQ^+$.
\end{proof}

\begin{rmk}
    There is a second proof of formality given in \cite{DGMS} by way of the ``Principle of Two Types'', which establishes the uniform vanishing of higher Massey products for compact K\"{a}hler manifolds. This method is not available to us as there is at present no understanding of higher Massey-like products for vertex algebras.
\end{rmk}

\subsection{\label{sec:iterated}Iterated semi-infinite cohomology}

For this last subsection, we will assume that $\CV_{\rm matter}$ satisfies the hypotheses of Theorem \ref{thm:BRSTKahlerpackage}. We further assume the (compact) Lie algebra $\fg_c$ can be written as the direct sum of two such Lie algebras $\fg_c = \fg_{c,1} \oplus \fg_{c,2}$ and hence can write $\fg = \fg_1 \oplus \fg_2$, where $\fg_1$, $\fg_2$ are the complexifications of $\fg_{c,1}$, $\fg_{c,2}$. For the sake of brevity we set $\CV = C(\wh{\fg}_{-2h^\vee}, \fg, \CV_{\rm matter})$.

With these assumptions, we can decompose the $\mathfrak{sl}(2)$ triple $\{\Pi, L, \Lambda\}$
\[
	\Pi = \Pi_1 + \Pi_2~, \qquad L = L_1 + L_2~, \qquad \Lambda = \Lambda_1 + \Lambda_2~,
\]
as well as the differentials $\bQ^\alpha$, their adjoints $\overline{\bQ}_\alpha$, and the Laplacian $\Delta$
\[
	\bQ^\alpha = \bQ^\alpha_1 + \bQ^\alpha_2~, \qquad \overline{\bQ}_\alpha = \overline{\bQ}_{1,\alpha} + \overline{\bQ}_{2,\alpha}~, \qquad \Delta = \Delta_1 + \Delta_2~.
\]
The operators $\Pi_a$, $L_a$, $\Lambda_a$, $\bQ^\alpha_a$, $\overline{\bQ}_{a, \alpha} = (\bQ^\alpha_a)^\dagger$, and $\Delta_a$ separately satisfy the K\"{a}hler identities in Eqs. \eqref{eq:Hodge}, \eqref{eq:Lefschetz1}, and \eqref{eq:Lefschetz2}. Moreover, there are separate Hodge decompositions for each $a = 1,2$ and hence separate K\"{a}hler packages with respect to $\fg_1$ and to $\fg_2$ on $\wh{\CV}$.
\begin{prop}\label{prop:iteratedHodgerelations}
    The operators $\Pi_a$, $L_a$, $\Lambda_a$, $\bQ^\alpha_a$, $\overline{\bQ}_{a, \alpha} = (\bQ^\alpha_a)^\dagger$, and $\Delta_a$ and corresponding Hodge decompositions equip $\CV$ with two separate K\"{a}hler packages.
\end{prop}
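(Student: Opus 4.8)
The plan is to show that, for each $a\in\{1,2\}$, the operators $\Pi_a,L_a,\Lambda_a,\bQ^\alpha_a,\overline{\bQ}_{a,\alpha},\Delta_a$ are precisely the operators produced by the construction of Section~\ref{sec:BRSTreview} applied to $\fg_a$ (acting on $\CV_{\rm matter}$ through the restricted moment map and extended by the identity on the remaining tensor factor), so that every relation and decomposition established in Sections~\ref{sec:BRSTreview}--\ref{sec:Kahler} is inherited verbatim with every symbol decorated by the subscript $a$.

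First I would record the tensor factorization induced by $\fg=\fg_1\oplus\fg_2$. Since $\kappa$ is block-diagonal with respect to this decomposition, one has ${\rm Sf}[\C^2\otimes\fg]\cong{\rm Sf}[\C^2\otimes\fg_1]\otimes{\rm Sf}[\C^2\otimes\fg_2]$ and $V^{-2h^\vee}(\fg)\cong V^{-2h^\vee}(\fg_1)\otimes V^{-2h^\vee}(\fg_2)$ (levels taken per factor), and the structure constants $f_{AB}{}^C$ vanish unless $A,B,C$ lie in a common summand. Substituting this into the explicit formulas for $\Pi,L,\Lambda$ in Remark~\ref{rmk:sl2} and for $\bQ^\pm$ (Eq.~\eqref{eq:Qpm}), $Q^\pm$ (Eq.~\eqref{eq:BRSTQ}) and $S^\pm$ (Eq.~\eqref{eq:BRSTS}) exhibits each of these as a sum of two terms, the $a$-th built only from the modes $\{\eta^{\pm,A}_n,J_{A,n}:A\in\fg_a\}$; this defines $\Pi_a,L_a,\Lambda_a,\bQ^\alpha_a,Q^\alpha_a,S^\alpha_a,\Delta_a$, and I set $\overline{\bQ}_{a,\alpha}\coloneqq(\bQ^\alpha_a)^\dagger$. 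The restricted moment map $V^{-2h^\vee}(\fg_a)\hookrightarrow V^{-2h^\vee}(\fg)\to\CV_{\rm matter}$ is again a good Hamiltonian action in the sense of Definition~\ref{dfn:goodaction} (its three conditions only involve the individual currents $J_A$), so $\bQ^\pm_a$ is, up to the identity on the ${\rm Sf}[\C^2\otimes\fg_b]$ factor with $b\neq a$, exactly the differential of Proposition~\ref{prop:QpQm} built from $\fg_a$. Hence Theorem~\ref{thm:BRSTKahlerpackage} applied to $\fg_a$ gives that $\{\Pi_a,L_a,\Lambda_a,\bQ^\alpha_a,\overline{\bQ}_{a,\alpha},\Delta_a\}$ obeys all of Eqs.~\eqref{eq:Hodge}--\eqref{eq:Lefschetz2}.

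Next I would verify that the label-$1$ operators graded-commute with the label-$2$ operators, which is what makes the above "decoration" consistent with the total operators. The two mode sets $\{\eta^{\pm,A}_n,J_{A,n}:A\in\fg_1\}$ and $\{\eta^{\pm,B}_m,J_{B,m}:B\in\fg_2\}$ (anti\=/)commute: the symplectic-fermion brackets vanish because the form $\epsilon\otimes\kappa$ on $\C^2\otimes\fg$ has no components across the summands; the current brackets $[J_{A,m},J_{B,n}]$ vanish for $A\in\fg_1,B\in\fg_2$ because $[\fg_1,\fg_2]=0$ and $\kappa$ is block-diagonal; and the symplectic fermions commute with the matter currents. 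Consequently every cross term $[\bQ^\alpha_1,\bQ^\beta_2]$, $[\bQ^\alpha_1,\overline{\bQ}_{2,\beta}]$, $[\Pi_1,\bQ^\alpha_2]$, $[L_1,\bQ^\alpha_2]$, $[\Lambda_1,\bQ^\alpha_2]$, and so on vanishes; expanding the total relations \eqref{eq:Hodge}--\eqref{eq:Lefschetz2} as sums over $a$ and discarding cross terms reproduces the single-factor relations, consistently with the previous step. For the two Hodge decompositions of $\CV$ with respect to $\fg_a$, note that $\Delta_a=[\bQ^\pm_a,\overline{\bQ}_{a,\pm}]$ is self-adjoint (since $\overline{\bQ}_{a,\pm}=(\bQ^\pm_a)^\dagger$) and positive semi-definite, by the same computation $\langle x|\Delta_a x\rangle=\|\bQ^\pm_a x\|^2+\|\overline{\bQ}_{a,\pm}x\|^2$ used in the proof of Proposition~\ref{prop:quartet}; since $\bQ^\pm_a$ and $\overline{\bQ}_{a,\pm}$ preserve the finite-dimensional conformal-weight subspaces $\CV_h$, finite-dimensional Hodge theory on each $\CV_h$ and summing over $h$ yields $\CV=\ker\Delta_a\oplus{\rm im}\,\bQ^+_a\oplus{\rm im}\,\overline{\bQ}_{a,+}=\ker\Delta_a\oplus{\rm im}\,\bQ^-_a\oplus{\rm im}\,\overline{\bQ}_{a,-}$. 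Assembling these decompositions with the relations above produces a K\"ahler package for each $a$.

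The only genuinely non-formal point is checking that $(\bQ^\pm_a)^\dagger$, the adjoint taken with respect to the Hermitian form on $\CV$, is again built solely from $\fg_a$-modes and coincides with the Lemma of Section~\ref{sec:Kahler} applied to $\fg_a$. This uses that the Hermitian form on $\CV\cong(\CV_{\rm matter}\otimes{\rm Sf}[\C^2\otimes\fg_1]\otimes{\rm Sf}[\C^2\otimes\fg_2])^G$ is the (graded) tensor product of the forms on the three factors, so that the adjoint of an operator acting only on $\CV_{\rm matter}\otimes{\rm Sf}[\C^2\otimes\fg_a]$ is computed there and tensored with the identity; combined with the block-diagonality of $\kappa$ (hence of $\epsilon\otimes\kappa$ and of the good-action data), this reduces the adjoint computation to the one already performed in Section~\ref{sec:Kahler} for $\fg_a$. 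Once this is in hand, the remainder is a transcription of the Section~\ref{sec:Kahler} arguments with every symbol carrying a subscript $a$.
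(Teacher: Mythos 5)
Your proposal is correct and follows the same route the paper intends: the paper states this proposition without a formal proof, relying on exactly the block-diagonality of $\kappa$ and $f_{AB}{}^C$ with respect to $\fg=\fg_1\oplus\fg_2$ that you spell out, so that each labelled family of operators is the Section~\ref{sec:Kahler} construction for the corresponding summand and the two families graded-commute. Your write-up simply supplies the details (tensor factorization of ${\rm Sf}[\C^2\otimes\fg]$, goodness of each restricted action, factorwise computation of adjoints, and fixed-conformal-weight Hodge theory) that the paper leaves implicit.
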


As an immediate consequence, $\CV$ satisfies the $\bQ^-\bQ^+$ lemma for each of these packages.

\begin{cor}\label{cor:doubleddc}
	Let $x \in \CV$ be such that
    \[
		1)~\bQ^-_1 x = 0 = \bQ^+_1 x \qquad \qquad 2)~x = \bQ^-_1 x' ~\text{ or }~ x = \bQ^+_1 x'
	\]
	for some $x' \in \CV$, then there exists $\wt{x} \in \CV$ with $x = \bQ^-_1 \bQ^+_1 \wt{x}$. Similarly, if $y$ is such that
    \[
		1)~\bQ^-_2 y = 0 = \bQ^+_2 y \qquad \qquad 2)~y = \bQ^-_2 y' ~\text{ or }~ y = \bQ^+_2 y'
	\]
	for some $y' \in \CV$, then there exists $\wt{y} \in \CV$ with $y = \bQ^-_2 \bQ^+_2 \wt{y}$.
\end{cor}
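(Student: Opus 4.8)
The plan is to observe that Corollary~\ref{cor:doubleddc} is nothing more than the $\bQ^-\bQ^+$ lemma (Corollary~\ref{cor:ddc}) applied separately to each of the two K\"ahler packages furnished by Proposition~\ref{prop:iteratedHodgerelations}, so the proof should consist almost entirely of citing earlier results. Fix $a \in \{1,2\}$. By Proposition~\ref{prop:iteratedHodgerelations}, the operators $\bQ^\pm_a$, $\overline{\bQ}_{a,\pm} = (\bQ^\pm_a)^\dagger$, and $\Delta_a$ satisfy the relations of Eq.~\eqref{eq:Hodge}, \emph{i.e.}\ they generate a copy of the very ${}^*$-Lie superalgebra for which Proposition~\ref{prop:quartet} was established, and $\CV$ is a graded unitary representation of it.

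First I would check that Hodge theory can be applied one conformal-weight space at a time. The operators $\bQ^\pm_a$ are assembled from the modes of the currents $J_A$ with $T_A\in\fg_a$ and the symplectic fermions $\eta^{\pm,A}$ valued in $\fg_a$, arranged so as to be zero modes of conformal-weight-one fields; since $\overline{\bQ}_{a,\pm}$ are their adjoints with respect to $\langle-|-\rangle$, which pairs only states of equal conformal weight, all four operators, and hence $\Delta_a$, preserve the grading $\CV=\bigoplus_h\CV_h$. Each $\CV_h$ is finite-dimensional, so Proposition~\ref{prop:quartet} applies to it verbatim, decomposing $\CV_h$ orthogonally into $\Delta_a$-harmonic vectors and copies of the quartet of Figure~\ref{fig:quartet} built from $\bQ^\pm_a$ and $\overline{\bQ}_{a,\pm}$.

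The conclusion then follows exactly as in the proof of Corollary~\ref{cor:ddc}: if $x$ satisfies hypotheses 1) and 2) for $\fg_1$, then 1) places $x$ in $\ker\bQ^+_1\cap\ker\bQ^-_1$ while 2) forces the $\Delta_1$-harmonic component of $x$ to vanish, so $x$ lies in the span of the quartets; taking $\wt x$ to be the element of the relevant quartet lying in $\ker\overline{\bQ}_{1,+}\cap\ker\overline{\bQ}_{1,-}$ gives $x=\bQ^-_1\bQ^+_1\wt x$. Replacing the subscript $1$ by $2$ throughout gives the statement for $\fg_2$.

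I do not expect a genuine obstacle here: the entire analytic and representation-theoretic content has already been isolated in Propositions~\ref{prop:quartet} and~\ref{prop:iteratedHodgerelations}. The only point requiring minor care is that the vanishing of the cross-terms — which is what lets $\Delta_1$ and $\Delta_2$ be treated independently and what makes $\{\bQ^\pm_a,\overline{\bQ}_{a,\pm},\Delta_a\}$ close into the right superalgebra — is part of the assertion of Proposition~\ref{prop:iteratedHodgerelations} that we are invoking, and that the decomposition $\bQ^\pm=\bQ^\pm_1+\bQ^\pm_2$ respects the conformal-weight (and $\mathfrak{R}$-) gradings, so that the weight-space-by-weight-space argument is legitimate.
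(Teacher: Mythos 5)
Your proposal is correct and follows the same route as the paper, which treats this corollary as an immediate consequence of Proposition \ref{prop:iteratedHodgerelations} combined with the quartet mechanism of Proposition \ref{prop:quartet} exactly as in Corollary \ref{cor:ddc}. The additional care you take in checking that $\bQ^\pm_a$, $\overline{\bQ}_{a,\pm}$, and $\Delta_a$ preserve the finite-dimensional conformal-weight spaces is a reasonable elaboration of a point the paper leaves implicit.
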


The following is an immediate corollary of Theorem \ref{thm:gradedunitaryBRST} together with Corollary \ref{cor:ddc}.

\begin{cor}\label{cor:iteratedddc}
    Let $[x] \in H(\CV, \bQ^+_1)$ be such that
    \[
        1) \bQ^-_2{}^* [x] = 0 = \bQ^+_2{}^* [x] \qquad 2) [x] = \bQ^-_2{}^*[x'] ~\text{ or }~ [x] = \bQ^+_2{}^*[x']
    \]
    for some $[x'] \in H(\CV, \bQ^+_1)$, then there exists $[\wt{x}] \in H(\CV, \bQ^+_1)$ with $[x] = \bQ^-_2{}^* \bQ^+_2{}^* [\wt{x}]$.
\end{cor}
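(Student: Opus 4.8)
The plan is to reduce this statement to the single $\bQ^-\bQ^+$ lemma of Corollary \ref{cor:ddc} by identifying $H(\CV, \bQ^+_1)$ with a relative semi-infinite cochain complex for $\fg_2$ over a graded-unitary matter vertex algebra, on which $\bQ^\pm_2{}^*$ act as the standard differentials.

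First I would note that, since $\fg = \fg_1 \oplus \fg_2$ makes the structure constants $f_{ABC}$ vanish on mixed indices, the differential $\bQ^+_1$ from Eq.~\eqref{eq:Qpm} involves only the $\fg_1$-currents and the $\fg_1$-symplectic fermions; hence $\bQ^+_1$, $\bQ^\pm_2$, their adjoints, and the Laplacians $\Delta_1$, $\Delta_2$ all mutually (anti)commute, so $\bQ^\pm_2$ descend to operators $\bQ^\pm_2{}^*$ on $H(\CV,\bQ^+_1)$. Writing $\CV \cong \big(\CV_{\rm matter} \otimes {\rm Sf}[\C^2\otimes\fg_1] \otimes {\rm Sf}[\C^2\otimes\fg_2]\big)^{G_1\times G_2}$ and setting $\CM \coloneqq H\big(C(\wh{\fg_1}_{-2h^\vee},\fg_1,\CV_{\rm matter}),\bQ^+_1\big)$, a K\"unneth argument on each finite-dimensional conformal-weight space (the factor ${\rm Sf}[\C^2\otimes\fg_2]$ carries the trivial differential) together with exactness of $G_2$-invariants (as $G_2$ is reductive and acts semisimply) yields an isomorphism of vertex algebras
\[
    H(\CV, \bQ^+_1) \;\cong\; \big(\CM \otimes {\rm Sf}[\C^2\otimes\fg_2]\big)^{G_2} \;=\; C(\wh{\fg_2}_{-2h^\vee}, \fg_2, \CM),
\]
under which $\bQ^\pm_2{}^*$ is carried to the pair of differentials $\bQ^\pm$ of the right-hand complex.

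Second I would verify that $\CM$ satisfies the hypotheses of Corollary \ref{cor:ddc} with $\fg$ replaced by $\fg_2$. By (the $\bQ^+$-version of) Theorem \ref{thm:gradedunitaryBRST}, together with Proposition \ref{prop:QpQm}, $\CM$ is a weak graded-unitary vertex algebra, and graded unitary if $\CV_{\rm matter}$ is. The $\fg_2$ chiral quantum moment map of $\CV_{\rm matter}$ lands in the $\fg_1$-relative subcomplex, has regular OPE with the $\fg_1$-currents, hence commutes with $\bQ^+_1$ and descends to a genuine vertex algebra homomorphism $V^{-2h^\vee}(\fg_2) \to \CM$, so $\CM$ carries a Hamiltonian $\wh{\fg_2}_{-2h^\vee}$ action. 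Its goodness (Definition \ref{dfn:goodaction}) is inherited: conditions $(i)$ and $(iii)$ are manifestly stable under passing to $\bQ^+_1$-cohomology (the $\fg_2$-currents keep conformal weight $1$ and $\mathfrak{R}$-degree $1$, and the action of $\rho$ is unchanged), while condition $(ii)$ follows from the shortening bound (Proposition \ref{prop:shortening}) applied in $\CM$, whose $\mathfrak{R}$-filtration is again truncated. Corollary \ref{cor:ddc} then applies verbatim to $C(\wh{\fg_2}_{-2h^\vee},\fg_2,\CM)$ with its differentials $\bQ^\pm$, and transporting the conclusion back along the isomorphism above gives exactly the claimed $\bQ^-_2{}^*\bQ^+_2{}^*$ statement on $H(\CV,\bQ^+_1)$.

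The step I expect to be the main obstacle is the second one---checking cleanly that the $\fg_2$-moment map descends to a bona fide homomorphism into $\CM$ (not merely an algebra map up to homotopy) and that all three goodness conditions survive passage to $\bQ^+_1$-cohomology. An alternative that sidesteps this bookkeeping is to argue inside $\CV$ directly: by Proposition \ref{prop:iteratedHodgerelations} the operators $\bQ^\pm_2$, $\overline{\bQ}_{2,\pm}$, $\Delta_2$ commute with $\Delta_1$, hence preserve $\ker\Delta_1$, and restrict there to a finite-dimensional unitary representation of the ${}^*$-Lie superalgebra of Eq.~\eqref{eq:Hodge}; Proposition \ref{prop:quartet} and the argument of Corollary \ref{cor:ddc} then give the $\bQ^-_2\bQ^+_2$-lemma on $\ker\Delta_1$, and one finishes by identifying $H(\CV,\bQ^+_1)\cong\ker\Delta_1$ via $\bQ^+_1$-Hodge theory, noting that $\bQ^\pm_2{}^*$ corresponds to $\bQ^\pm_2|_{\ker\Delta_1}$ and that any $[x]$, $[x']$ meeting the hypotheses can be represented by $\Delta_1$-harmonic elements.
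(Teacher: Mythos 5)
Your proposal is correct and takes essentially the same route as the paper, which dispatches this corollary in one line as an ``immediate'' consequence of Theorem \ref{thm:gradedunitaryBRST} together with Corollary \ref{cor:ddc}: your first argument is the honest unpacking of that one-liner (identifying $H(\CV,\bQ^+_1)$ with $C(\wh{\fg}_{2,-2h_2^\vee},\fg_2,\CM)$ and re-running the $\bQ^-\bQ^+$ lemma there), and your fallback via $\Delta_1$-harmonic representatives is precisely the mechanism---Proposition \ref{prop:iteratedHodgerelations} plus the quartet decomposition of Proposition \ref{prop:quartet} restricted to $\ker\Delta_1$---that the paper's Section \ref{sec:iterated} implicitly relies on. The one imprecision is your justification of goodness condition $(ii)$ for $\CM$: the shortening bound of Proposition \ref{prop:shortening} only forces $J^{[p']}_{A,n}=0$ for $p'<-1$ and says nothing about the vanishing of $J^{[1]}_{A,n}$ for $n\geqslant 0$, which must instead be inherited from condition $(ii)$ for $\CV_{\rm matter}$ (or deduced from goodness of the filtration in the genuinely graded-unitary case)---but your harmonic-representative alternative sidesteps this bookkeeping entirely, so the proof stands.
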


With these preliminary results in hand, the following is a simple extension of Theorem \ref{thm:formality}.

\begin{prop}\label{prop:iteratedformality}
    The following is a diagram of morphisms of DG vertex algebras all of which are quasi-isomorphisms:
    \[
    \begin{tikzpicture}
		\draw (0,0) node {$(\CV, \bQ^-)$};
		\draw (5,0) node {$(H(\CV, \bQ_1^+), \bQ^-_2{}^*)$};
		\draw (10,0) node {$(H(H(\CV, \bQ^+_1), \bQ^+_2{}^*), 0)$};
		\draw (2.5,1.5) node {$(\ker \bQ^+_1, \bQ^-)$};
		\draw (7.5,1.5) node {$(\ker \bQ^+_2{}^*, \bQ^-_2{}^*)$};
		
		\draw[right hook->] (2, 1.25) -- (0.6,0.4);
		\draw[->>] (3,1.25) -- (4.4,0.4);
		\draw[right hook->] (7, 1.25) -- (5.6,0.4);
		\draw[->>] (8,1.25) -- (9.4,0.4);
	\end{tikzpicture}
	\]
\end{prop}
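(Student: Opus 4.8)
The plan is to build the diagram in two stages, applying \Cref{thm:formality} twice with the two commuting pairs of differentials, and to verify at each stage that the inputs to \Cref{thm:formality} are met. For the \emph{left triangle}, I would apply \Cref{thm:formality} verbatim with the pair $(\bQ^+,\bQ^-)=(\bQ^+_1+\bQ^+_2,\bQ^-_1+\bQ^-_2)$ playing no role; rather, I apply it with $\bQ^+_1$ in place of $\bQ^+$ and $\bQ^-=\bQ^-_1+\bQ^-_2$ in place of $\bQ^-$. For this I need two facts: (a) $\CV$ satisfies the $\bQ^-\bQ^+$ lemma for the pair $(\bQ^+_1,\bQ^-_1)$ — this is exactly \Cref{cor:doubleddc}, which is available since \Cref{prop:iteratedHodgerelations} furnishes a K\"ahler package for $\fg_1$; and (b) $\bQ^-_1+\bQ^-_2$ and $\bQ^+_1$ anticommute and are square-zero, which is immediate from the iterated K\"ahler relations (the operators indexed by $\fg_1$ and by $\fg_2$ commute). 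Here one must be mildly careful: \Cref{thm:formality} as stated uses a single $\bQ^-$; I would remark that its proof only uses that $\bQ^-$ is a square-zero derivation anticommuting with $\bQ^+_1$ and that the $\bQ^-_1\bQ^+_1$ lemma holds — both of which persist when $\bQ^-$ is replaced by $\bQ^-_1+\bQ^-_2$, since the $\bQ^-\bQ^+$-lemma hypothesis in that proof is only ever invoked on elements of the form $\bQ^+_1(\cdot)$ and one can take the witness $\wt x$ to have the form supplied by \Cref{cor:doubleddc}. This yields that
\[
    (\CV,\bQ^-)\overset{i}{\longleftarrow}(\ker\bQ^+_1,\bQ^-)\overset{\pi}{\longrightarrow}(H(\CV,\bQ^+_1),\bQ^-{}^*)
\]
are quasi-isomorphisms of DG vertex algebras, and that the induced differential on $H(\CV,\bQ^+_1)$ is $\bQ^-_1{}^*+\bQ^-_2{}^*=\bQ^-_2{}^*$ (the $\bQ^-_1$ part acts trivially on $\bQ^+_1$-cohomology, by the same $\bQ^-_1\bQ^+_1$-lemma argument that appears in the proof of \Cref{thm:formality}). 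This is precisely the left triangle.

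For the \emph{right triangle}, I would apply \Cref{thm:formality} again, this time to the DG vertex algebra $W\coloneqq H(\CV,\bQ^+_1)$ equipped with the pair of commuting square-zero derivations $\bQ^+_2{}^*$ and $\bQ^-_2{}^*$. Two things need checking. First, $\bQ^\pm_2$ descend to well-defined operators on $W$: this is because they commute with $\bQ^+_1$, hence preserve $\ker\bQ^+_1$ and ${\rm im}\,\bQ^+_1$. Second — the crucial point — $W$ must satisfy the $\bQ^-_2{}^*\bQ^+_2{}^*$ lemma, and this is exactly the content of \Cref{cor:iteratedddc}. With these in hand, \Cref{thm:formality} produces the quasi-isomorphisms
\[
    (W,\bQ^-_2{}^*)\overset{i}{\longleftarrow}(\ker\bQ^+_2{}^*,\bQ^-_2{}^*)\overset{\pi}{\longrightarrow}(H(W,\bQ^+_2{}^*),0)\,,
\]
which is the right triangle, and it identifies the rightmost object as $(H(H(\CV,\bQ^+_1),\bQ^+_2{}^*),0)$ with zero differential. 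Finally I would note that the horizontal composite $(\CV,\bQ^-)\to(H(\CV,\bQ^+_1),\bQ^-_2{}^*)\to(H(H(\CV,\bQ^+_1),\bQ^+_2{}^*),0)$, being a composite of quasi-isomorphisms, is itself one; and that every arrow in the diagram is a morphism of DG vertex algebras by the remark following \Cref{thm:formality} (inclusions of vertex subalgebras, projections to quotients by vertex-algebra ideals), now applied at each stage.

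I expect the main obstacle to be the bookkeeping in the first stage: confirming that the proof of \Cref{thm:formality} genuinely goes through with $\bQ^-$ replaced by $\bQ^-_1+\bQ^-_2$ rather than a differential that pairs with $\bQ^+_1$ into a single Laplacian. The subtlety is that in \Cref{thm:formality}'s proof the $\bQ^-\bQ^+$ lemma is used with $\bQ^+=\bQ^+_1$ fixed but $\bQ^-$ arbitrary among square-zero derivations anticommuting with $\bQ^+_1$; one has to make sure each invocation only ever needs the lemma \emph{for the pair $(\bQ^-_1,\bQ^+_1)$} (which is \Cref{cor:doubleddc}), applied to elements already known to be $\bQ^+_1$-exact or $\bQ^-_1$-exact, and never the mixed statement. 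Once that verification is made explicit, the rest is a routine two-fold application. A clean alternative, which I would mention, is to prove directly — using the K\"ahler package for $\fg_1$ — that $\ker\bQ^+_1$ is a deformation retract of $\CV$ compatibly with $\bQ^-_2$, reducing everything to \Cref{thm:formality} for $W$ alone.
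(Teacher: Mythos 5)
Your right triangle matches the paper: you apply Theorem \ref{thm:formality} to $H(\CV,\bQ^+_1)$ with the induced pair $\bQ^\pm_2{}^*$, using Corollary \ref{cor:iteratedddc} as the requisite $\bQ^-\bQ^+$ lemma, and your identification of the induced differentials ($\bQ^-{}^*=\bQ^-_2{}^*$ on $H(\CV,\bQ^+_1)$ via Corollary \ref{cor:doubleddc}, and zero on the iterated cohomology via Corollary \ref{cor:iteratedddc}) is exactly the paper's.

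The gap is in the left triangle, and it is precisely the point you flag as ``the main obstacle'' and then defer rather than resolve. Re-running the proof of Theorem \ref{thm:formality} with $\bQ^+_1$ and the \emph{total} differential $\bQ^-=\bQ^-_1+\bQ^-_2$ breaks at the first step: for surjectivity of $i^*$ one takes $x$ with $\bQ^- x=0$ and wants to apply the lemma to $\bQ^+_1 x$. That element is $\bQ^+_1$-closed, $\bQ^+_1$-exact, and $\bQ^-$-closed, but it need \emph{not} be $\bQ^-_1$-closed, since $\bQ^-_1\bQ^+_1 x=\bQ^+_1\bQ^-_2 x$, which has no reason to vanish. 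So Corollary \ref{cor:doubleddc} (the $\bQ^-_1\bQ^+_1$ lemma) does not apply to it; what you would actually need is the mixed ``$\bQ^-\bQ^+_1$ lemma,'' which is not among the established corollaries and does not follow componentwise in the $\Pi_2$-grading (where $\bQ^- x=0$ only yields the staircase relations $\bQ^-_1 x_k=-\bQ^-_2 x_{k-1}$). The injectivity step has the same defect. The paper avoids this entirely: it regards $(\CV,\bQ^-)$, $(\ker\bQ^+_1,\bQ^-)$, and $(H(\CV,\bQ^+_1),\bQ^-_2{}^*)$ as total complexes of bicomplexes with differentials $\bQ^-_1$ and $\bQ^-_2$, observes that Theorem \ref{thm:formality} for the $\fg_1$ package makes $i$ and $\pi$ quasi-isomorphisms on each row (the $\bQ^-_1$-complexes), and then uses finite-dimensionality of fixed-conformal-weight subspaces to upgrade this to a quasi-isomorphism of total complexes. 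Your closing suggestion of a deformation retract of $\CV$ onto $\ker\bQ^+_1$ compatible with $\bQ^-_2$ is essentially this argument and is the right repair, but as written your main line of proof does not go through.
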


\begin{proof}
    We first comment on the form of the diagram. In the proof of Theorem \ref{thm:formality} we saw that the action of $\bQ^-{}^*$ on $H(\CV, \bQ^+)$ was trivial by applying the $\bQ^- \bQ^+$ lemma. The same is true here: the action of $\bQ^-{}^*$ on $H(\CV, \bQ^+_1)$ is equal to $\bQ^-_2{}^*$ due to Corollary \ref{cor:doubleddc} and the action of $\bQ^-_2{}^*{}^*$ on $H(H(\CV, \bQ^+_1), \bQ^+_2{}^*)$ is trivial due to Corollary \ref{cor:iteratedddc}. We have made both of these replacements in the above diagram. With this in mind, it is easy to see that the maps appearing in the diagram are indeed vertex algebra morphisms. The leftwards arrows are inclusions of kernels of derivations, hence inclusions of subalgebras, and the rightwards arrows are quotients by images of differentials, hence quotients by ideals, so each of these maps are indeed morphisms of vertex algebras. We now show that each of the maps appearing here are quasi-isomorphisms.
	
	The fact that the right two morphisms are quasi-isomorphisms is identical to the proof of Theorem \ref{thm:formality} by repeatedly applying Corollary \ref{cor:iteratedddc}. To see that the left two morphisms are quasi-isomorphisms, we view each of the complexes appearing as bi-complexes with horizontal and vertical differentials $\bQ^-_1$, $\bQ^-_2$ (or the corresponding induced maps). By restricting to the subspaces of fixed conformal dimension, which are finite dimensional, Theorem \ref{thm:formality} implies that these maps are quasi-isomorphisms on each row and hence induce quasi-isomorphisms on the total complex.
\end{proof}

An immediate corollary of this result is that the relative semi-infinite $\wh{\fg}_{-2h^\vee}$-cohomology is isomorphic to the iterated cohomology for its direct summands.
\begin{cor}\label{cor:iteratedcohomology}
    Let $\CV_{\rm matter}$ be a weak graded-unitary vertex algebra with a good Hamiltonian $\wh{\fg}_{-2h^\vee}$ action and let $\fg = \bigoplus_{k=1}^N \fg_k$ be a direct sum decomposition of $\fg$. Then there is an isomorphism of vertex algebras
    \[
		H(\wh{\fg}_{-2h^\vee}, \fg, \CV_{\rm matter}) \cong H(\wh{\fg}_{N,-2h_N^\vee}, \fg_N, \dots H(\wh{\fg}_{-2h^\vee_1}, \fg_1, \CV_{\rm matter})\dots)~.
	\]
\end{cor}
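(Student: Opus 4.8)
The plan is to deduce this from Proposition \ref{prop:iteratedformality} by identifying the double cohomology appearing there with an iterated relative semi-infinite cohomology, and then to induct on $N$. Throughout I set $\CV = C(\wh{\fg}_{-2h^\vee}, \fg, \CV_{\rm matter})$.

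I would first dispatch the case $N = 2$, writing $\fg = \fg_1 \oplus \fg_2$ and correspondingly $G = G_1 \times G_2$. Since $[\fg_1,\fg_2] = 0$, the summand $\fg_2$ is a trivial $\fg_1$-module, so the symplectic fermions split as ${\rm Sf}[\C^2\otimes\fg] \cong {\rm Sf}[\C^2\otimes\fg_1]\otimes{\rm Sf}[\C^2\otimes\fg_2]$ with the second factor $G_1$-invariant. Because the direct-sum structure forces $f_{ABC}$ to vanish unless its indices lie in a single summand, $\bQ^+_1$ involves only the $\fg_1$-currents of $\CV_{\rm matter}$ and the modes of ${\rm Sf}[\C^2\otimes\fg_1]$; hence the description $\CV \cong (\CV_{\rm matter}\otimes{\rm Sf}[\C^2\otimes\fg])^{G_1\times G_2}$ rearranges into a $G_1\times G_2$-equivariant identification of DG vertex algebras
\[
    \CV \cong \Big( C(\wh{\fg}_{1,-2h_1^\vee},\fg_1,\CV_{\rm matter}) \otimes {\rm Sf}[\C^2\otimes\fg_2]\Big)^{G_2},
\]
with $\bQ^+_1$ acting only on the first tensor factor. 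The $\fg_2$-currents of $\CV_{\rm matter}$ are $G_1$-invariant and $\bQ^+_1$-closed, hence descend to $H^{\scriptstyle{\frac{\infty}{2}}+\bullet}(\wh{\fg}_{1,-2h_1^\vee},\fg_1,\CV_{\rm matter})$ and endow it with a Hamiltonian $\wh{\fg}_{2,-2h_2^\vee}$ action; by Theorem \ref{thm:gradedunitaryBRST} this cohomology is weak graded-unitary (graded-unitary when $\CV_{\rm matter}$ is), and the defining properties of $\rho$ together with $\wh{\rho}([\![x]\!]) = [\![\rho(x)]\!]$ show that the inherited action is good. Taking $\bQ^+_1$-cohomology then commutes both with the tensor product --- Künneth, since ${\rm Sf}[\C^2\otimes\fg_2]$ carries no differential --- and with the exact functor $(-)^{G_2}$ by reductivity of $G_2$, and the differential it induces via $\bQ^+_2$ is precisely the relative BRST differential $\bQ^+$ for $\fg_2$ with coefficients in the cohomology. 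This yields
\[
    H(\CV,\bQ^+_1) \cong C\big(\wh{\fg}_{2,-2h_2^\vee},\fg_2,H^{\scriptstyle{\frac{\infty}{2}}+\bullet}(\wh{\fg}_{1,-2h_1^\vee},\fg_1,\CV_{\rm matter})\big)
\]
as DG vertex algebras, with $\bQ^+_2{}^*$ corresponding to $\bQ^+$. Feeding this into Proposition \ref{prop:iteratedformality}, whose quasi-isomorphisms induce an isomorphism of vertex algebras on cohomology since the right-hand complex carries the zero differential, and using Proposition \ref{prop:QpQm} to pass freely between $\bQ^+$- and $\bQ^-$-cohomology, one obtains the $N=2$ case of the corollary.

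The general case follows by induction on $N$: apply the $N=2$ statement to the splitting of $\fg$ into $\fg_1$ and its complement $\fg' \coloneqq \bigoplus_{k=2}^{N}\fg_k$ (with all affine levels twice critical), reducing the left-hand side to $H\big(\wh{\fg}',\fg',H(\wh{\fg}_{1,-2h_1^\vee},\fg_1,\CV_{\rm matter})\big)$; then invoke the inductive hypothesis for the decomposition $\fg' = \bigoplus_{k=2}^N\fg_k$ applied to the coefficient vertex algebra $H(\wh{\fg}_{1,-2h_1^\vee},\fg_1,\CV_{\rm matter})$, which by the discussion above is again weak graded-unitary (resp. graded-unitary) with a good Hamiltonian $\wh{\fg}'$ action. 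I expect the main obstacle to be the DG-vertex-algebra identification of $H(\CV,\bQ^+_1)$ in the $N=2$ step: disentangling the tensor-and-invariants structure, checking that passing to $\bQ^+_1$-cohomology genuinely commutes with $(-)^{G_2}$, and verifying that the differential it induces is honestly the relative BRST differential for $\fg_2$ with coefficients in $H(\wh{\fg}_{1,-2h_1^\vee},\fg_1,\CV_{\rm matter})$ rather than merely some square-zero derivation. The remaining bookkeeping --- which copies of $\bQ^\pm$ and which levels appear where, and the inheritance of goodness under passage to cohomology --- is routine given Theorem \ref{thm:gradedunitaryBRST} and the definition of $\rho$.
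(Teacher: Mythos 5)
Your proposal is correct and takes essentially the same route as the paper: the corollary is deduced from Proposition \ref{prop:iteratedformality} (iterated for $N>2$), with Proposition \ref{prop:QpQm} used to pass between $\bQ^+$- and $\bQ^-$-cohomologies. The paper simply declares the corollary ``immediate'' from that proposition, whereas you spell out the step it leaves implicit --- the K\"unneth/invariants identification of $H(\CV,\bQ^+_1)$ with the relative BRST complex for $\fg_2$ with coefficients in $H^{\scriptstyle{\frac{\infty}{2}}+\bullet}(\wh{\fg}_{1,-2h_1^\vee},\fg_1,\CV_{\rm matter})$ and the inheritance of a good Hamiltonian action on cohomology --- which is a worthwhile elaboration but not a different argument.
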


\begin{rmk}
    Combined with Proposition \ref{prop:automorphisms}, we see that there are as many ${\rm USp}(2)$ automorphisms as the number of simple factors plus the rank of the radical of $\fg$, although some may act trivially.
\end{rmk}

\begin{physrmk}
    From the perspective of four-dimensional physics, this result reflects the fact that the gauge coupling is exactly marginal and hence it does not whether one arrives at a point with nonzero gauge couplings all at once or in stages. For three-dimensional boundary VOAs, this result is more physically nontrivial, as gauging triggers a renormalization group (RG) flow. This result implies that at the level of the boundary VOA, the RG flows associated to gauging different direct summands in the gauge algebra commute.
\end{physrmk}

\section{\label{sec:PVAandHLCR}Poisson vertex algebras and Hall--Littlewood chiral rings}

We now turn to the structural properties of certain algebraic objects that can be extracted from the graded-unitary vertex algebras that arise from four-dimensional $\CN=2$ SCFTs. The first object we study is the Poisson vertex algebra (PVA) obtained by taking the associated graded with respect to the (good) $\mathfrak{R}$-filtration underlying a graded-unitary vertex algebra. From the perspective of four-dimensional physics, this PVA is identified with the algebra of local operators in the holomorphic-topological twist (also called the Kapustin twist) of the SCFT \cite{Kapustin:2006hi, Oh:2019bgz,Jeong:2019pzg}. As we will see, this PVA inherits a Hermitian inner product from its parent graded-unitary vertex algebra. Moreover, we find that the PVAs coming from the vertex algebras underlying relative semi-infinite vertex algebras are equipped with a K\"{a}hler package and hence share many of the properties of their parent graded-unitary vertex algebras, such as formality.

We conclude with an application to finite-type Poisson algebraic geometry. Namely, we instantiate the additional $\Z$ grading mentioned in Remark \ref{rmk:Zgrading} and Physical Remark \ref{physrmk:Zgrading} to extract from the associated graded PVA a Poisson algebra called the Hall--Littlewood chiral ring. For the DG PVAs coming from the DG vertex algebras underlying relative semi-infinite cohomology, this Poisson algebra can be identified with a derived Poisson reduction; we show that formality of the PVA implies formality of this derived symplectic quotient. All M\"{o}bius vertex algebras appearing in this section are assumed to be of CFT type.

\subsection{\label{sec:PVA}K\"{a}hler packages for associated graded PVAs}

As shown by Li \cite[Prop. 4.2]{Li2004}, see also Proposition \ref{prop:assocgradedPVA}, if $\CV$ is a vertex algebra equipped with a (half-integral) good filtration $\CF_\bullet$ (\emph{i.e.}, $\CV$ is a (half-integer-)filtered vertex algebra), then the associated graded ${\rm gr}_\CF \CV$ is naturally a PVA. When $\CV$ is a M\"{o}bius vertex algebra equipped with a nondegenerate invariant bilinear form $(-,-)$ for which this good filtration is nondegenerate, we saw in Section \ref{sec:unitary} that the filtration can be refined to a grading on the underlying vector space by the Gram-Schmidt procedure. In this situation, the PVA structure can be made rather explicit using this identification of the graded components of ${\rm gr}_\CF \CV$. If $x \in \CV_p$ is a homogeneous vector of degree $p$ we can decompose its modes $x_n$ into homogeneous pieces as
\[
    x_n = x_n^{[p]} + \dots~,
\]
where $\dots$ includes terms of degree less than $p$. The state-field correspondence of the associated graded PVA then assigns to $x$ the field
\[
    Y_+(x,z) = \sum_{n \in \Z} x_n^{[p]} z^{-n-1}~.
\]
The goodness of the filtration ensures that $x_n^{[p]} = 0$ for $n \geqslant 0$, so this field belongs to ${\rm End}(\CV)[\![z]\!]$. Similarly, the vertex Lie algebra structure on ${\rm gr}_\CF \CV$ takes the form
\[
    Y_-(x, z) = \sum_{n \geqslant 0} x^{[p-1]}_n z^{-n-1}~.
\]
Note that if $\CV$ is a graded-unitary vertex algebra then the associated graded PVA ${\rm gr}_\mathfrak{R} \CV$ inherits a Hermitian inner product via the isomorphism of the underlying vector spaces.

\begin{lem}\label{lem:HermitianPVA}
    Let $(\CV, (-,-), \mathfrak{R}_\bullet, \rho)$ be a graded-unitary vertex algebra, then the associated graded ${\rm gr}_\mathfrak{R} \CV$ is naturally equipped with a Hermitian inner product. 
\end{lem}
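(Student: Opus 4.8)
The plan is to obtain the Hermitian inner product on ${\rm gr}_{\mathfrak{R}}\CV$ by transporting the form $\langle-|-\rangle$ of $\CV$ across the canonical identification of $\CV$ with its associated graded. Recall from Section \ref{subsec:good_filtrations} that a nondegenerate half-integral filtration $\mathfrak{R}_\bullet$ refines \emph{canonically} to a grading $\CV=\bigoplus_{h,R}\CV_{h,R}$: since $(-,-)$ is nondegenerate on each $\mathfrak{R}_R\CV$ and pairs only vectors of the same conformal weight, one may take $\CV_{h,R}$ to be the orthogonal complement of $\mathfrak{R}_{R-1}\CV\cap\CV_h$ inside $\mathfrak{R}_R\CV\cap\CV_h$, with no arbitrary choices involved, and then $\mathfrak{R}_R\CV=\bigoplus_{h,\,R'\leqslant R}\CV_{h,R'}$. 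The map sending the class of $x\in\mathfrak{R}_R\CV\cap\CV_h$ to the unique representative of that class lying in $\CV_{h,R}$ is then a canonical isomorphism of $\tfrac12\N\times\tfrac12\Z$-graded vector spaces $\Phi\colon{\rm gr}_{\mathfrak{R}}\CV\xrightarrow{\ \sim\ }\CV$.

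First I would simply set $\langle u|v\rangle\coloneqq\langle\Phi(u)|\Phi(v)\rangle$ for $u,v\in{\rm gr}_{\mathfrak{R}}\CV$. Hermiticity and positive-definiteness are then immediate: $\Phi$ is a linear isomorphism, and $\langle-|-\rangle$ on $\CV$ is Hermitian by Corollary \ref{cor:Hermitianform} and positive-definite by the definition of a graded-unitary vertex algebra.

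The only point deserving care is that this inner product is genuinely intrinsic to ${\rm gr}_{\mathfrak{R}}\CV$, so that ``natural'' is justified. To this end I would check that the structures defining $\langle-|-\rangle$ on $\CV$ all descend: $(-,-)$ restricts nondegenerately to each $\CV_{h,R}$ and hence induces a nondegenerate bilinear form on ${\rm gr}_{\mathfrak{R}}\CV$; the conjugation $\rho$ preserves every $\mathfrak{R}_R\CV$, so it induces an anti-linear map $\bar\rho$ on ${\rm gr}_{\mathfrak{R}}\CV$ compatible with the associated-graded products $Y_\pm$ (the identity $\rho(x_ny)=(-1)^{|x||y|}\rho(x)_n\rho(y)$ passes to top-degree components); and $\sigma$, being defined purely through the refined grading, passes to an order-four map $\bar\sigma$ acting by $\ii^{2R}$ on the $R$-graded component. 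A one-line computation then gives $\langle u|v\rangle=((\bar\sigma\circ\bar\rho)(u),v)$, i.e. exactly the Corollary \ref{cor:Hermitianform} construction applied to the inherited data. I do not anticipate any serious obstacle here; the content is bookkeeping with the canonical grading refinement, and the single delicate point is the canonicity of that refinement (which holds, being fixed by orthogonality inside each fixed-conformal-weight subspace). As stressed in the remark following the lemma, one should \emph{not} expect the resulting form to be invariant with respect to the vertex Lie structure of ${\rm gr}_{\mathfrak{R}}\CV$ — only Hermitian and positive-definite.
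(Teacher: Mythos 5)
Your proposal is correct and follows essentially the same route as the paper, which simply observes that ${\rm gr}_\mathfrak{R}\CV$ inherits the Hermitian form via the canonical isomorphism of underlying vector spaces furnished by the orthogonality refinement of the nondegenerate filtration. Your additional bookkeeping (that $(-,-)$, $\rho$, and $\sigma$ all descend so the form is intrinsically the Corollary \ref{cor:Hermitianform} construction on ${\rm gr}_\mathfrak{R}\CV$) is a harmless elaboration of what the paper leaves implicit.
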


\begin{rmk}
    Although the PVA is equipped with an inner product, it is \emph{not} invariant with respect to the vertex Lie algebra structure. As a result, it is less natural than the inner product appearing in the parent graded-unitary vertex algebra.
\end{rmk}

Now suppose $\CV_{\rm matter}$ is a graded-unitary vertex algebra with a good Hamiltonian $\wh{\fg}_{-2h^\vee}$ action. Set $\CV = C(\wh{\fg}_{-2h^\vee}, \fg, \CV_{\rm matter})$, which is naturally a graded-unitary vertex algebra equipped with a pair of commuting differentials $\bQ^\pm$ rotated into each other by an ${\rm USp}(2)$ automorphism; indeed, it is equipped with a full K\"{a}hler package, \emph{cf.} Theorem \ref{thm:BRSTKahlerpackage}. The associated graded PVA ${\rm gr}_{\mathfrak{R}} \CV$ of $\CV$ is thus equipped with a pair of commuting differentials ${\rm gr} \bQ^\pm$; we can identify the action of these differentials with the terms in $\bQ^\pm$ of degree $\frac{1}{2}$, \emph{i.e.} ${\rm gr} \bQ^\pm = Q^\pm$. Moreover, the action of ${\rm USp}(2)$ coming from the ``Lefschetz'' $\fsl(2)$ triple $\{\Pi, L, \Lambda\}$ induces an action on ${\rm gr}_\mathfrak{R} \CV$ that rotates $Q^\pm$ into each other. We already identified the adjoints of these differentials with respect to the Hermitian inner product on $\CV$ as coming from the terms of degree $-\frac{1}{2}$ of the other differential: $\overline{Q}_\pm \coloneqq (Q^\pm)^\dagger = \mp S^\mp$. Together with the Laplacian $\Delta$, we see that these maps satisfy
\begin{equation}\label{eq:PVAKahler}
    [Q^\alpha, Q^\beta] = 0~, \qquad [\overline{Q}_\alpha, \overline{Q}_\beta] = 0~, \qquad [Q^\alpha, \overline{Q}_\beta] = \tfrac{1}{2} \delta^\alpha_\beta \Delta~.
\end{equation}
We can summarize these observations as a PVA analogue of Theorem \ref{thm:BRSTKahlerpackage}:

\begin{prop}\label{prop:PVAKahler}
    Let $\CV_{\rm matter}$ be a graded-unitary vertex algebra with a good Hamiltonian $\wh{\fg}_{-2h^\vee}$ action. Then the associated graded PVA ${\rm gr}_\mathfrak{R} C(\wh{\fg}_{-2h^\vee}, \fg, \CV_{\rm matter})$ is equipped with a K\"{a}hler package.
\end{prop}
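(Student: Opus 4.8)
The plan is to promote the entire Kähler package on the graded-unitary vertex algebra $\CV = C(\wh{\fg}_{-2h^\vee}, \fg, \CV_{\rm matter})$ established in Theorem \ref{thm:BRSTKahlerpackage} to the associated graded PVA ${\rm gr}_\mathfrak{R}\CV$, checking at each stage that the relevant operator descends and that the relations are preserved (possibly up to the overall factor of $\tfrac12$ visible in Eq. \eqref{eq:PVAKahler}). Concretely, I would first recall from the discussion preceding Lemma \ref{lem:HermitianPVA} that, since $\mathfrak{R}_\bullet$ is good and nondegenerate, the underlying vector space of ${\rm gr}_\mathfrak{R}\CV$ is canonically identified with that of $\CV$ via Gram--Schmidt, and the PVA structure maps $Y_\pm$ are the top-degree pieces of the vertex algebra structure maps. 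Under this identification the Hermitian inner product transported from $\CV$ (Lemma \ref{lem:HermitianPVA}) is literally the same pairing, so ``adjoint with respect to $\langle-|-\rangle$'' means the same thing on both sides.

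Next I would track the differentials. The Lemma in Section \ref{sec:Kahler} writes $\bQ^\pm = Q^\pm + S^\pm$ with $Q^\pm$ of $\mathfrak{R}$-degree $+\tfrac12$ and $S^\pm$ of degree $-\tfrac12$; since $\bQ^\pm$ are derivations of $\CV$ preserving the filtration, they induce derivations ${\rm gr}\,\bQ^\pm$ of ${\rm gr}_\mathfrak{R}\CV$, and by degree-counting ${\rm gr}\,\bQ^\pm = Q^\pm$ (the degree-lowering part $S^\pm$ acts trivially on the associated graded). The relations $[Q^\pm, Q^\mp] = 0$ and $(Q^\pm)^2 = 0$ are inherited directly from the corresponding relations among the $Q^\alpha$ already proved in that Lemma. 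For the adjoints: the Lemma gives $(Q^\alpha)^\dagger = \epsilon_{\alpha\beta} S^\beta$, so the adjoint of $Q^\pm$ (computed with the same Hermitian form, now viewed on ${\rm gr}_\mathfrak{R}\CV$) is $\mp S^\mp$, which I would denote $\overline{Q}_\pm$. I would then verify Eq. \eqref{eq:PVAKahler}: the relations $[\overline{Q}_\alpha,\overline{Q}_\beta]=0$ follow from $[S^+,S^-]=0$ and $(S^\pm)^2=0$ in the Lemma, while $[Q^\alpha,\overline{Q}_\beta]=\tfrac12\delta^\alpha_\beta\Delta$ should be extracted from the relation $[\bQ^\alpha,\overline{\bQ}_\beta]=\delta^\alpha_\beta\Delta$ on $\CV$ (Eq. \eqref{eq:Hodge}) by isolating $\mathfrak{R}$-degree-zero contributions --- the cross terms $[Q^\pm, Q^\mp{}^\dagger$-type pieces$]$ and $[S^\pm, S^\mp{}^\dagger]$ contribute, and a short bookkeeping argument using $[Q^\pm,S^\mp]=-[Q^\mp,S^\pm]$ gives the factor of $\tfrac12$. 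The $\fsl(2)$ triple $\{\Pi, L, \Lambda\}$ is built from modes of the symplectic fermions, which lie in $\mathfrak{R}$-degree $0$ (Section \ref{rmk:sl2}), so these operators preserve the filtration and descend unchanged; their commutation relations with $Q^\pm$ and $\overline{Q}_\pm$ follow by taking the top-degree part of Eqs. \eqref{eq:Lefschetz1}--\eqref{eq:Lefschetz2}, and invariance of $\Delta$ is automatic.

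Finally, the Hodge decomposition: $\Delta$ is a nonnegative self-adjoint operator on each (finite-dimensional) conformal-weight subspace of ${\rm gr}_\mathfrak{R}\CV$, preserving $\mathfrak{R}$-degree, and $Q^\pm$, $\overline{Q}_\pm$ preserve these finite-dimensional subspaces, so the standard finite-dimensional Hodge argument (exactly as invoked before Theorem \ref{thm:BRSTKahlerpackage}, or via Proposition \ref{prop:quartet}) yields
\[
    {\rm gr}_\mathfrak{R}\CV = \ker\Delta \oplus {\rm im}\,Q^+ \oplus {\rm im}\,\overline{Q}_+ = \ker\Delta \oplus {\rm im}\,Q^- \oplus {\rm im}\,\overline{Q}_-~.
\]
Assembling these pieces gives the Kähler package for ${\rm gr}_\mathfrak{R}\CV$. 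The main obstacle I anticipate is purely bookkeeping rather than conceptual: carefully verifying that the normalization in Eq. \eqref{eq:PVAKahler} really is $\tfrac12\delta^\alpha_\beta\Delta$ and not $\delta^\alpha_\beta\Delta$, i.e. confirming that passing to the associated graded halves the Laplacian because exactly one of the two mutually-adjoint contributions to $[\bQ^\alpha,\overline{\bQ}_\beta]$ survives in top degree. One should also be slightly careful that the Hermitian form on ${\rm gr}_\mathfrak{R}\CV$, although not invariant under the vertex Lie structure (as noted in the remark after Lemma \ref{lem:HermitianPVA}), is still nondegenerate and positive-definite, which is all that the Hodge-theoretic argument requires.
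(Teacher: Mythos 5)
Your proposal is correct and follows essentially the same route as the paper, which proves this proposition not in a separate proof environment but in the paragraph immediately preceding it: identify ${\rm gr}\,\bQ^\pm = Q^\pm$, take $\overline{Q}_\pm = (Q^\pm)^\dagger = \mp S^\mp$ from the decomposition lemma of Section \ref{sec:Kahler}, descend the $\fsl(2)$ triple (which indeed has $\mathfrak{R}$-degree $0$), and verify Eq.~\eqref{eq:PVAKahler} together with the Hodge decomposition on finite-dimensional weight spaces. Your bookkeeping for the factor of $\tfrac12$ via $[Q^+,S^-] = -[Q^-,S^+]$ (so that $\Delta = -2[Q^+,S^-]$ while $[Q^+,\overline{Q}_+] = -[Q^+,S^-]$) is exactly the computation the paper leaves implicit.
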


As with graded-unitary vertex algebras, the most important consequence of this K\"{a}hler package is the fact that the associated graded PVA satisfies a $Q^+Q^-$ lemma. In fact, the K\"{a}hler package described above is not necessary to establish this result; it follows from identifying the underlying vector spaces together with the fact that $C(\wh{\fg}_{-2h^\vee}, \fg, \CV_{\rm matter})$ satisfies the $Q^+ Q^-$ lemma.%
\footnote{That $C(\wh{\fg}_{-2h^\vee}, \fg, \CV_{\rm matter})$ satisfies the $Q^+Q^-$ lemma follows by repeating the analysis in Section \ref{sec:recombination} with $\bQ^\pm$ (resp $\overline{\bQ}_\pm$) replaced by $Q^\pm$ (resp. $\overline{Q}_\pm$). This is analogous to the fact that the $\diff\!\diff^c$ lemma and $\pd \opd$ lemma are equivalent for compact K\"{a}hler manifolds.} %
%

\begin{cor}\label{cor:ddcPVA}
    Let $\CV_{\rm matter}$ be a graded-unitary vertex algebra with a good Hamiltonian $\wh{\fg}_{-2h^\vee}$ action and set $\CV = (\CV_{\rm matter} \otimes {\rm Sf}[\C^2\otimes\fg])^G$. Then the associated graded PVA ${\rm gr}_\mathfrak{R} \CV$ satisfies the $Q^+ Q^-$ lemma.
\end{cor}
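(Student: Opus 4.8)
The plan is to reduce the claim to the fact, recorded in the footnote above, that $\CV = C(\wh{\fg}_{-2h^\vee}, \fg, \CV_{\rm matter})$ itself satisfies the $Q^+Q^-$ lemma. The crucial point is that ${\rm gr}_\mathfrak{R}\CV$ and $\CV$ share the same underlying vector space --- via the Gram--Schmidt refinement of the $\mathfrak{R}$-filtration to a grading described in Section \ref{sec:unitary} --- and that under this identification the induced differentials ${\rm gr}\,\bQ^\pm$ are exactly the $\mathfrak{R}$-degree-$\tfrac{1}{2}$ pieces $Q^\pm$ of Eq. \eqref{eq:BRSTQ}. So the $Q^+Q^-$ lemma for the PVA ${\rm gr}_\mathfrak{R}\CV$ is word-for-word the same assertion as the $Q^+Q^-$ lemma for the complex Hilbert space $\CV$ equipped with the commuting square-zero operators $Q^\pm$, their adjoints $\overline{Q}_\pm = \mp S^\mp$, and the Laplacian $\Delta$ of Eq. \eqref{eq:PVAKahler}.

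First I would collect the ingredients needed to run the quartet argument in this setting: ${\rm gr}_\mathfrak{R}\CV$ carries the positive-definite Hermitian form of Lemma \ref{lem:HermitianPVA}; its subspaces of fixed conformal weight $\bigoplus_R \CV_{h,R}$ are finite-dimensional; and the operators $Q^\pm$, $\overline{Q}_\pm$, $\Delta$ all preserve each such subspace, since $Q^\pm$ raises $R$ by $\tfrac{1}{2}$ (and $\overline{Q}_\pm$ lowers it by $\tfrac{1}{2}$) while leaving $h$ fixed, as is visible from the explicit mode formulas \eqref{eq:BRSTQ}, \eqref{eq:BRSTS} together with the compatibility of the $\mathfrak{R}$-grading with conformal weight. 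The relations \eqref{eq:PVAKahler} then exhibit $\{Q^\pm,\overline{Q}_\pm,\Delta\}$ as a finite-dimensional unitary representation of exactly the ${}^*$-Lie superalgebra of Eq. \eqref{eq:Hodge} on each $\bigoplus_R \CV_{h,R}$, the only difference being the innocuous rescaling $\Delta \mapsto \tfrac{1}{2}\Delta$, which does not affect the representation theory.

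With this in place, Proposition \ref{prop:quartet} decomposes each $\bigoplus_R \CV_{h,R}$ orthogonally into trivial ($\Delta$-harmonic) representations and quartets of the shape in Figure \ref{fig:quartet}, with $\bQ^\pm$, $\overline{\bQ}_\pm$ replaced throughout by $Q^\pm$, $\overline{Q}_\pm$, and the argument of Corollary \ref{cor:ddc} then goes through verbatim: an element $x \in {\rm gr}_\mathfrak{R}\CV$ that is $Q^+$- and $Q^-$-closed and either $Q^-$- or $Q^+$-exact has vanishing harmonic part, hence --- working inside the finite-dimensional subspace of its conformal weight --- is a sum of bottom-of-quartet vectors, so $x = Q^-Q^+\wt{x}$ for the corresponding sum $\wt{x}$ of top-of-quartet vectors. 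I do not foresee any genuine obstacle; the only step requiring a moment's care is verifying that $Q^\pm$ and their adjoints really do preserve the finite-dimensional conformal-weight subspaces of ${\rm gr}_\mathfrak{R}\CV$, which is precisely where compatibility of the $\mathfrak{R}$-grading with conformal weight enters.
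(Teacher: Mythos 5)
Your proposal is correct and takes essentially the same route as the paper: the paper likewise observes that ${\rm gr}_\mathfrak{R}\CV$ and $\CV$ share an underlying vector space on which the induced differentials are $Q^\pm$, and that the $Q^+Q^-$ lemma then follows by rerunning the quartet/Hodge argument of Section \ref{sec:recombination} with $\bQ^\pm$, $\overline{\bQ}_\pm$ replaced by $Q^\pm$, $\overline{Q}_\pm$. Your additional checks (finite-dimensionality and preservation of conformal-weight subspaces, and the harmless rescaling of the Laplacian in Eq. \eqref{eq:PVAKahler}) are exactly the points the paper leaves implicit.
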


With the $Q^+ Q^-$ lemma in hand, all the consequences described in Section \ref{sec:consequences} carry over to give structural properties of ${\rm gr}_\mathfrak{R} \CV$. For example, the DG PVA $({\rm gr}_\mathfrak{R} \CV, Q^-)$ is formal, \emph{i.e.}, it is quasi-isomorphic (as a PVA) to its cohomology, and its cohomology has admits an action of ${\rm USp}(2)$ extending its cohomological grading.

\subsection{\label{sec:HL}Formality for Hall--Littlewood chiral rings}

We now describe a special type of graded-unitary vertex algebra that more closely models those coming from the construction \cite{Beem:2013sza}. Recall that when the Grassmann parity the underlying graded-unitary vertex algebra is induced from a $\Z$-grading as in Remark \ref{rmk:Zgrading} and Physical Remark \ref{physrmk:Zgrading}, the vector space underlying the vertex algebra is triply graded,
\[
    \CV = \bigoplus_{\substack{h \in \scriptstyle{\frac{1}{2}}\N\\ R \in \scriptstyle{\frac{1}{2}}\N\\d \in \Z\\ }} \CV_{h,R,d}~,
\]
where $h$ is the conformal weight grading, $R$ is the grading that induces the $\mathfrak{R}$-filtration, and $d$ is the weights of the internal $\Z$ grading that induces the $\Z_2$ action of the graded-unitary structure. Importantly, if this vertex algebra comes from a unitary four-dimensional $\CN=2$ SCFT then these gradings satisfy a \emph{BPS bound}:
\[
	h - R - |r| \geqslant 0 \quad \rightsquigarrow \quad h \geqslant R + \tfrac{1}{2}|d|~.
\]
This requirement is part of the notion of a \emph{filtration of four-dimensional type} given in Def. 2.3 of \cite{ArabiArdehali:2025fad}; we will not need this full structure in the following. The operators that saturate the BPS bound are preserved by yet more superconformal symmetry than Schur operators and are called Hall--Littlewood operators if $r \geqslant 0$, anti-Hall--Littlewood operators if $r \leqslant 0$, and Higgs branch chiral operators if $r = 0$.

\begin{dfn}\label{dfn:HLOps}
    A graded-unitary vertex algebra $(\CV, (-,-), \mathfrak{R}_\bullet, \rho)$ is said to \defterm{satisfy the BPS bound} if
    \[
        \CV_{h,R,d} = 0 \quad {\rm for} \quad h < R + \tfrac{1}{2}|d|~.
    \]
    The vectors in $\CV_{R-\scriptstyle{\frac{1}{2}}d, R, d}$ ($d \leqslant 0$) are referred to as \defterm{Hall--Littlewood operators}, whereas the vectors in $\CV_{R+\scriptstyle{\frac{1}{2}}d, R, d}$ ($d \geqslant 0$) are referred to \defterm{anti-Hall--Littlewood operators}. The vectors in $\CV_{R, R, 0}$ are referred to as \defterm{Higgs branch operators}.
\end{dfn}

\begin{rmk}
    The examples of graded-unitary vertex algebras given in Section \ref{sec:examples} can be given internal $\Z$ gradings that satisfy the BPS bound. For the symplectic boson VOA ${\rm Sb}[V]$, the internal grading is trivial: the generators have $h = \frac{1}{2}$ and $R = \frac{1}{2}$ and hence must be given $d = 0$. The Hall--Littlewood, anti-Hall--Littlewood, and Higgs branch operators all coincide and are given by the $q^a(z)$ and normally-ordered products thereof. For the symplectic fermion VOA ${\rm Sf}[\C^2 \otimes U]$, the internal grading satisfies the BPS bound if we give the generator $\eta^{\pm, A}$ from $U_\pm$ degree $d = \pm 1$, matching the physically-relevant grading. The Hall--Littlewood (resp. anti-Hall--Littlewood) operators are given by normally-ordered products of $\eta^{-,A}(z)$ (resp. $\eta^{+,A}(z)$); there are no Higgs branch operators. Note that if $\CV_1$, $\CV_2$ are graded-unitary vertex algebras whose internal $\Z$ gradings satisfy the BPS bound, then so does the diagonal grading on $\CV_1 \otimes \CV_2$ and any of its subalgebras.
\end{rmk}

\begin{rmk}
    The normally-ordered product of two Hall--Littlewood operators need not be Hall--Littlewood, and similarly for the normally-ordered product of two anti-Hall--Littlewood operators, due to the fact the $\mathfrak{R}$-grading on the vector space $\CV$ is only a vertex-algebraic filtration.
\end{rmk}

\begin{prop}\label{prop:HLchiralring}
    Let $\CV$ be a graded-unitary vertex algebra whose internal $\Z$ grading satisfies the BPS bound. The normally-ordered product of Hall--Littlewood operators in the associated graded PVA ${\rm gr}_\mathfrak{R} \CV$ is also a Hall--Littlewood operator. Moreover, the vertex Lie algebra action of a Hall--Littlewood operator on another is again a Hall--Littlewood operator.
\end{prop}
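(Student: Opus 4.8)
The plan is to prove both claims by direct bookkeeping with the triple grading $\CV=\bigoplus_{h,R,d}\CV_{h,R,d}$, using the explicit description of the Poisson vertex algebra structure on ${\rm gr}_\mathfrak{R}\CV$ recorded above. Nondegeneracy of $\mathfrak{R}_\bullet$ identifies ${\rm gr}_\mathfrak{R}\CV$ with $\bigoplus_R \CV_R$ as a vector space, so a Hall--Littlewood operator of $\CV$ (an element of $\CV_{R-\tfrac{1}{2}d,R,d}$ with $d\leqslant 0$) \emph{is} a Hall--Littlewood operator of ${\rm gr}_\mathfrak{R}\CV$; under this identification the commutative product of $x\in\CV_{h_x,R_x,d_x}$ and $y\in\CV_{h_y,R_y,d_y}$ is $x_{-1}^{[R_x]}y$, and the $n$-th vertex Lie bracket ($n\geqslant 0$) is $x_n^{[R_x-1]}y$. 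Thus it suffices to track $(h,R,d)$ through these two operations.

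First I would note that for homogeneous $x\in\CV_{h_x,R_x,d_x}$ the homogeneous component $x_n^{[p']}$ maps $\CV_{h,q,d}$ into $\CV_{h+h_x-n-1,\,q+p',\,d+d_x}$: this combines $L_0$-covariance of modes, compatibility of the OPE with the internal $\Z$-grading, and the definition of the $\mathfrak{R}$-homogeneous pieces from Proposition \ref{prop:shortening}. For the normally-ordered product, take $x,y$ Hall--Littlewood, so $h_x=R_x-\tfrac{1}{2}d_x$ and $h_y=R_y-\tfrac{1}{2}d_y$ with $d_x,d_y\leqslant 0$. Then $x_{-1}^{[R_x]}y\in\CV_{h_x+h_y,\,R_x+R_y,\,d_x+d_y}$, and since $h_x+h_y=(R_x+R_y)-\tfrac{1}{2}(d_x+d_y)$ and $d_x+d_y\leqslant 0$, this vector lies in $\CV_{R-\tfrac{1}{2}d,R,d}$ with $R=R_x+R_y$ and $d=d_x+d_y\leqslant 0$, i.e.\ it is a Hall--Littlewood operator. (Goodness of $\mathfrak{R}_\bullet$ is what makes $x_{-1}^{[R_x]}$ the top component and guarantees $x_n^{[R_x]}=0$ for $n\geqslant 0$, so that the PVA product is well defined; it plays no role in the grading count itself.)

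For the vertex Lie bracket, again let $x,y$ be Hall--Littlewood and $n\geqslant 0$. Then $x_n^{[R_x-1]}y\in\CV_{h_x+h_y-n-1,\,R_x+R_y-1,\,d_x+d_y}$; writing $R=R_x+R_y-1$ and $d=d_x+d_y\leqslant 0$, its conformal weight equals $h_x+h_y-n-1=(R_x+R_y)-\tfrac{1}{2}(d_x+d_y)-n-1=R-\tfrac{1}{2}d-n=R+\tfrac{1}{2}|d|-n$. By the BPS bound (Definition \ref{dfn:HLOps}) one has $\CV_{h,R,d}=0$ whenever $h<R+\tfrac{1}{2}|d|$, so $x_n^{[R_x-1]}y=0$ for every $n\geqslant 1$, while for $n=0$ the vector lies in $\CV_{R+\tfrac{1}{2}|d|,R,d}$ with $d\leqslant 0$ and is therefore again Hall--Littlewood. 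Hence the entire vertex Lie bracket of two Hall--Littlewood operators reduces to $x_0^{[R_x-1]}y$, which is Hall--Littlewood.

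I do not anticipate any real obstacle: the argument is a grading computation, and the only points requiring care are (i) lining up which $\mathfrak{R}$-homogeneous component of a mode implements the commutative product versus the vertex Lie bracket, and (ii) the observation that it is precisely the BPS bound, and not merely goodness of $\mathfrak{R}_\bullet$ (which only constrains the $\mathfrak{R}$-degree shift), that collapses the vertex Lie bracket to its zeroth mode. It may be worth remarking in the write-up that this collapse is exactly the statement that the Hall--Littlewood operators of ${\rm gr}_\mathfrak{R}\CV$ close into an honest finite-type Poisson algebra, with Poisson bracket of $\mathfrak{R}$-degree $-1$ --- which is the content one needs in order to define ${\rm HL}_\CV$ in the sequel.
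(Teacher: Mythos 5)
Your proof is correct and follows essentially the same route as the paper's: a tri-degree $(h,R,d)$ bookkeeping for the normally-ordered product, and the observation that the BPS bound kills every term of the vertex Lie action except the zeroth mode (the paper's ``leading term''), since the higher modes have the same $R$ and $d$ but strictly smaller conformal weight. Your write-up is simply a more explicit version of the paper's two-sentence argument, including the useful identification of which $\mathfrak{R}$-homogeneous mode components realize each operation.
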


\begin{proof}
    The first assertion follows from the fact that the normally-ordered product inside ${\rm gr}_\mathfrak{R} \CV$ of a vector in tri-degree $(h, R, d)$ and a vector in tri-degree $(h',R',d')$ is a vector tri-degree $(h + h',R + R', d + d')$. The second assertion follows from the fact that the vertex Lie algebra action of a vector in tri-degree $(h, R, d)$ on a vector in tri-degree $(h',R',d')$ is a vector in tri-degree $(h + h'-1, R + R'-1, d + d')$; the BPS bound ensures that only the leading term in the vertex Lie algebra action is nonzero as all other terms have the same $\mathfrak{R}$-degree but lower conformal weight.
\end{proof}

This proposition implies that the subspace of Hall--Littlewood operators naturally form a Poisson algebra, leading to the following definition.

\begin{dfn}
    Let $\CV$ be a graded-unitary vertex algebra $\CV$ with internal $\Z$ grading that satisfies the BPS bound. The Poisson algebra ${\rm HL}_\CV$ of Hall--Littlewood operators is called the \defterm{Hall--Littlewood chiral ring} of $\CV$. The subalgebra of Higgs branch operators is called the \defterm{Higgs branch chiral ring}.
\end{dfn}

\begin{rmk}
    The Hall--Littlewood chiral ring ${\rm HL}_\CV$ is naturally bigraded by the filtration degree $R$ and the additional $\Z$ grading $d$; the commutative product has bidegree $(0,0)$ and the Poisson bracket has bidegree $(-1,0)$. The Higgs branch chiral ring is naturally a Poisson subalgebra of the Hall--Littlewood chiral ring.
\end{rmk}

\begin{physrmk}
    In the context of the four-dimensional SCFT/VOA correspondence, the Higgs branch chiral ring is the algebra of functions on the Higgs branch of the parent $\CN=2$ SCFT. This is expected to be a Poisson manifold with finitely many symplectic leaves and is often a symplectic singularity in the sense of Beauville \cite{beauville1999symplectic}. The Hall--Littlewood chiral ring generally has nilpotent elements, but its nilradical is expected to be concentrated in degree $d > 0$.
\end{physrmk}

\begin{rmk}
    There is an analogous proposition for anti-Hall--Littlewood operators and correspondingly a second Poisson algebra $\overline{\rm HL}_\CV$ called the \defterm{Hall--Littlewood anti-chiral ring}. We will mostly focus on the Hall--Littlewood chiral ring as there is no meaningful distinction between the two chiral rings at this level of formality, and in cases of interest the two are isomorphic.
\end{rmk}

The following result is immediate.

\begin{lem}\label{lem:HLSbSf}
    The Hall--Littlewood chiral ring of ${\rm Sb}[V]$ is the ring of (polynomial) functions on $V$
    \[
        {\rm HL}_{{\rm Sb}[V]} \simeq \C[V]
    \]
    equipped with the its natural Poisson bracket coming from the symplectic structure on $V$. The Hall--Littlewood chiral ring of ${\rm Sf}[\C^2 \otimes U]$ is the exterior algebra on $U$ or, equivalently, the ring of functions on the parity-shifted dual $\Pi U^*$
    \[
        {\rm HL}_{{\rm Sf}[\C^2 \otimes U]} \simeq \bigwedge\!{}^\bullet U \simeq \C[\Pi U^*]
    \]
    equipped with the trivial Poisson structure.
\end{lem}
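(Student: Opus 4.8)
The plan is to compute both Hall--Littlewood chiral rings directly from the explicit description of the graded subspaces of $\mathrm{Sb}[V]$ and $\mathrm{Sf}[\C^2\otimes U]$ given in Section~\ref{sec:examples}, matching the $(h,R,d)$-tridegrees against the BPS bound $h\geqslant R+\tfrac12|d|$. For $\mathrm{Sb}[V]$: the generators $q^a$ have $(h,R,d)=(\tfrac12,\tfrac12,0)$, and a monomial $q^{a_1}_{-n_1-1}\cdots q^{a_\ell}_{-n_\ell-1}|0\rangle$ has $h=\tfrac{\ell}{2}+\sum n_i$, $R=\tfrac{\ell}{2}$, $d=0$. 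The BPS bound forces $h=R$, i.e.\ all $n_i=0$, so the Hall--Littlewood (= Higgs branch) operators are spanned by $q^{a_1}_{-1}\cdots q^{a_\ell}_{-1}|0\rangle$. The map sending $q^a_{-1}|0\rangle$ to the coordinate function dual to $e^a$ is then an isomorphism of graded vector spaces onto $\C[V]=\Sym(V^*)\cong\Sym(V)$ (the last identification via $\Omega$), and one checks it is a Poisson algebra map: the commutative product is $Y_+$, which on these zero-mode monomials is ordinary multiplication, while the Poisson bracket is the leading vertex-Lie term $Y_-$, which on $q^a_{-1}|0\rangle$, $q^b_{-1}|0\rangle$ returns $\Omega^{ab}$ times the vacuum by the mode relation $[q^a_n,q^b_m]=\Omega^{ab}\delta_{n+m+1,0}$ together with Proposition~\ref{prop:HLchiralring}. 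This is exactly the Poisson bracket on $\C[V]$ induced by $\Omega$.

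For $\mathrm{Sf}[\C^2\otimes U]$: using the notation $\eta^{\alpha,A}$ with $d(\eta^{\pm,A})=\pm1$ and $(h,R)=(\tfrac12,\tfrac12)$, a monomial $\eta^{\alpha_1,A_1}_{-n_1-1}\cdots\eta^{\alpha_\ell,A_\ell}_{-n_\ell-1}|0\rangle$ has $h=\tfrac{\ell}{2}+\sum n_i$, $R=\tfrac{\ell}{2}$ and $d=\sum_i\alpha_i$. Recall $\eta^a_0$ annihilates the vacuum module, so in fact each $n_i\geqslant1$ unless... wait---more carefully, the nonzero standard monomials have each $n_i\geqslant 1$ only for the $-$-modes; the correct statement (Remark after Proposition~\ref{prop:Sfgradedunitarity}) is that $\eta^a_0=0$ on the vacuum module, so the lowest available mode is $\eta^a_{-1}$, contributing $n_i=0$. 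The BPS bound $h\geqslant R+\tfrac12|d|$ with $h-R=\sum n_i\geqslant 0$ and $|d|=|\sum\alpha_i|\leqslant\ell$ forces $\sum n_i=0$ and $|\sum\alpha_i|=\ell$, i.e.\ \emph{all} $\alpha_i$ equal and all $n_i=0$. Thus Hall--Littlewood operators ($d\leqslant0$) are spanned by $\eta^{-,A_1}_{-1}\cdots\eta^{-,A_\ell}_{-1}|0\rangle$; since the $\eta^{-,A}$ are Grassmann-odd and their $(-1)$-modes anticommute on the vacuum ($[\eta^{-,A}_{-1},\eta^{-,B}_{-1}]$ has mode sum $-2\neq 0$, hence vanishes), these span $\bigwedge^\bullet U$ with basis the $T_A$'s. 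Identifying $\eta^{-,A}_{-1}|0\rangle$ with the odd generator dual to $T_A$ gives $\bigwedge^\bullet U\cong\C[\Pi U^*]$. Finally the Poisson bracket is trivial: by Proposition~\ref{prop:HLchiralring} the bracket of two HL operators is the leading vertex-Lie term, but $[\eta^{-,A}_{n},\eta^{-,B}_{m}]=n\,\epsilon^{--}K^{AB}\delta_{n+m,0}=0$ since $\epsilon^{--}=0$, so all brackets among HL generators vanish.

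The routine verifications are: (i) that the proposed linear bijections are well-defined on $\mathrm{gr}_{\mathfrak R}$ (i.e.\ compatible with the refined grading), which is immediate since the HL subspace sits in a single $\mathfrak R$-degree for each $\ell$ and the associated-graded product is just the leading term of the normally-ordered product; (ii) that these bijections intertwine products and brackets, which reduces to the three mode computations above via the definitions of $Y_+$ and $Y_-$ in Section~\ref{sec:PVA}; and (iii) tracking signs in the symplectic-fermion case, where the parity shift $\Pi U^*$ precisely accounts for the Grassmann-odd anticommutation. The main (mild) obstacle is purely bookkeeping: keeping the $\Omega$-raising/lowering and the $\epsilon$-contraction conventions consistent between the mode algebra and the ``functions on $V$'' / ``functions on $\Pi U^*$'' descriptions, and confirming that the Poisson bracket induced by $\Omega$ on $\C[V]$ is the one obtained from the leading term of $Y_-$ with the conventional normalization. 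There is no conceptual difficulty here---everything follows from the BPS bound cutting the monomial basis down to zero-modes and from Proposition~\ref{prop:HLchiralring}.
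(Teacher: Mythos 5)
The paper offers no proof of this lemma (it is declared ``immediate,'' with the identification of the Hall--Littlewood operators of ${\rm Sb}[V]$ and ${\rm Sf}[\C^2\otimes U]$ already spelled out in the remark following Definition \ref{dfn:HLOps}), so your direct computation from the monomial bases, the BPS bound, and Proposition \ref{prop:HLchiralring} is exactly the argument the authors have in mind. The ${\rm Sb}[V]$ half is correct as written: $h=\tfrac{\ell}{2}+\sum n_i$, $R=\tfrac{\ell}{2}$, $d=0$, saturation forces all $n_i=0$, and the mode computations $[q^a_{-1},q^b_{-1}]=0$ and $[q^a_0,q^b_{-1}]=\Omega^{ab}$ give the symmetric algebra with its symplectic Poisson bracket.

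There is, however, a concrete error in the symplectic-fermion half: you assign the generators $\eta^{\alpha,A}$ conformal weight $\tfrac12$, whereas they have weight $1$ (the OPE has a second-order pole; in the relative subcomplex $\eta^{+,A}=b^A$ and $\eta^{-,A}=\partial c^A$ both have weight one, and the adjoint formula $(\eta^a_{-n-1})^\dagger\propto\eta^b_{n+1}$ is the weight-one formula $x_{2h-2-n}$). With your formula $h=\tfrac{\ell}{2}+\sum n_i$ the saturation condition $h=R+\tfrac12|d|$ reads $\sum n_i=\tfrac12|d|$, which does \emph{not} force $\sum n_i=0$ and $|d|=\ell$ --- for instance $\eta^{-,A}_{-2}\eta^{-,B}_{-1}|0\rangle$ would then saturate the bound --- so the key deduction is a non sequitur as stated. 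The fix is immediate: with the correct weight one has $h=\ell+\sum n_i$, hence $h-R=\tfrac{\ell}{2}+\sum n_i\geqslant\tfrac{\ell}{2}\geqslant\tfrac12|d|$ with equality precisely when all $n_i=0$ and all $\alpha_i$ are equal, which is the statement you need. The remainder of your argument (antisymmetry of the $\eta^{-,A}_{-1}$ giving $\bigwedge^\bullet U$, and vanishing of the vertex-Lie bracket because $\epsilon^{--}=0$) is correct and completes the proof once the weight is corrected.
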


Let us now return to the main cases of interest. Namely, when $\CV = (\CV_{\rm matter} \otimes {\rm Sf}[\C^2 \otimes \fg])^G$ is the graded-unitary vertex algebra underlying the relative semi-infinite cohomology of a graded-unitary vertex algebra $\CV_{\rm matter}$ with a good Hamiltonian $\wh{\fg}_{-2h^\vee}$ action. If $\CV_{\rm matter}$ has an internal $\Z$ grading satisfying the BPS bound, then $\CV$ is graded by $\Z^2$ with one factor coming from $\CV_{\rm matter}$ and the second coming from ${\rm Sf}[\C^2 \otimes \fg]$. We equip $\CV$ with a $\Z$ grading satisfying the BPS bound by taking the diagonal grading. The following result identifies the relation between the Poisson algebra ${\rm HL}_{\CV_{\rm matter}}$ and the (DG) Poisson algebra ${\rm HL}_\CV$.

\begin{prop}\label{prop:HLBRST}
    Let $\CV_{\rm matter}$ be a graded-unitary vertex algebra equipped with a good Hamiltonian $\wh{\fg}_{-2h^\vee}$ action. Moreover, assume its internal $\Z$ grading satisfies the BPS bound and let $\CV = (\CV_{\rm matter} \otimes {\rm Sf}[\C^2 \otimes \fg])^G$ be the corresponding graded-unitary vertex algebra with the above $\Z$ grading satisfying the BPS bound. Then ${\rm HL}_\CV$ can be identified with the derived Poisson reduction of ${\rm HL}_{\CV_{\rm matter}}$ by $G$. In other words,
    \[
        {\rm Spec}~{\rm HL}_{\CV} \simeq ({\rm Spec}~{\rm HL}_{\CV_{\rm matter}})/\!\!/ G~.
    \]
\end{prop}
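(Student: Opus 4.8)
The plan is to compute ${\rm HL}_\CV$ by hand --- as a differential graded Poisson algebra, bigraded by $(R,d)$ --- and to recognise the outcome as a standard Koszul/BRST model for a derived Poisson reduction. The first step is to pin down the underlying graded commutative algebra. Since $G_c$ is compact, passage to $G$-invariants is exact, so ${\rm gr}_\mathfrak{R}\CV\cong\bigl({\rm gr}_\mathfrak{R}\CV_{\rm matter}\otimes{\rm gr}_\mathfrak{R}{\rm Sf}[\C^2\otimes\fg]\bigr)^{G}$; a triangle-inequality argument on the tri-degree $(h,R,d)$, exactly as in the Remark following Proposition~\ref{prop:HLchiralring}, shows that a Hall--Littlewood operator of a tensor product is a sum of tensor products of Hall--Littlewood operators of the two factors. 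Combining this with Lemma~\ref{lem:HLSbSf} --- which identifies ${\rm HL}_{{\rm Sf}[\C^2\otimes\fg]}$ with the exterior algebra $\bigwedge^\bullet\fg$ (trivial Poisson bracket) generated by the images of the fields $\partial c^{A}=\eta^{-,A}$, the fields $b^{A}=\eta^{+,A}$ of internal degree $+1$ contributing nothing --- yields an isomorphism of bigraded commutative algebras ${\rm HL}_\CV\cong\bigl(\mathcal{O}(M)\otimes\bigwedge^\bullet\fg\bigr)^{G}$ with $M:={\rm Spec}\,{\rm HL}_{\CV_{\rm matter}}$, the Poisson bracket on the right being the one of $\mathcal{O}(M)$ with $\bigwedge^\bullet\fg$ Poisson-central (the surviving ghosts commute with everything, being paired in ${\rm gr}_\mathfrak{R}$ only with the now-absent $b^{A}$).

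The second step is to identify the differential. The states $J_{A,-1}|0\rangle$ saturate the BPS bound in tri-degree $(1,1,0)$, so they define Higgs-branch functions $\mu_{A}\in\mathcal{O}(M)$, i.e.\ a moment map $\mu:M\to\fg^{*}$. On a Hall--Littlewood operator of $\CV$ the product in $\CV$ and the product in ${\rm gr}_\mathfrak{R}\CV$ coincide --- all lower-$\mathfrak{R}$-degree corrections lie below the BPS bound and so vanish --- so $\bQ^{-}$ restricts to a genuine derivation of ${\rm HL}_\CV$; moreover its $\mathfrak{R}$-raising part $Q^{-}$ pushes any Hall--Littlewood operator below the BPS bound and hence acts as zero, so the induced differential is the restriction of the $\mathfrak{R}$-lowering part $S^{-}$ of Equation~\eqref{eq:BRSTS}. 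Using the goodness hypothesis --- which forces $J_{A,n}^{[1]}=0$ for $n\geqslant0$ and $J_{A,n}^{[-1]}=0$ for $n\leqslant0$ --- and the current OPE, a direct computation then identifies this restriction with the Koszul/BRST differential attached to $(M,\mu)$: it annihilates the ghosts, sends $\mu_{A}$ to $-2h^{\vee}$ times the corresponding ghost, and more generally sends a matter generator $f$ to $\sum_{A}\eta^{-,A}\cdot(J_{A,1}^{[-1]}f)$, where $J_{A,1}^{[-1]}$ is the $\mathfrak{R}$-lowering ``contraction with $\mu_{A}$''. With Steps~1 and 2 in place, the resulting differential graded Poisson algebra is exactly a model for the coordinate ring of the $0$-shifted derived Poisson reduction of $(M,\mu)$ by $G$ in the sense of \cite{Calaque2015,Safronov2017}, giving ${\rm Spec}\,{\rm HL}_\CV\simeq({\rm Spec}\,{\rm HL}_{\CV_{\rm matter}})/\!\!/G$.

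The main obstacle will be the second step: checking carefully that the finite-type truncation of $S^{-}$ (and of the vertex-Lie bracket) reproduces on the nose --- coefficients and signs included --- the Koszul/Chevalley--Eilenberg differential and Poisson structure of the BRST model, and that nothing survives the truncation beyond what the BPS bound permits, so that the goodness hypothesis is precisely what makes the moment-map terms land where they should. One must also match the bigrading by $R$ and by $d$ with the weight and cohomological gradings on the BRST complex, and be careful about which flavour of derived Poisson reduction is invoked (honest $G$-invariants of the derived zero fibre versus a stacky quotient); abelian summands of $\fg$, where $h^{\vee}=0$ and the coefficient in $S^{-}(\mu_{A})$ degenerates, should be dealt with separately.
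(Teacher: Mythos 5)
Your Step 1 (the identification of the underlying bigraded commutative algebra as $\bigl({\rm HL}_{\CV_{\rm matter}}\otimes\bigwedge^\bullet\fg\bigr)^G$ via Lemma \ref{lem:HLSbSf}) agrees with the paper. The gap is in Step 2: you have identified the wrong differential. The Hall--Littlewood chiral ring, as defined in the paper, sits inside the associated graded PVA ${\rm gr}_\mathfrak{R}\CV$, whose differential is the $\mathfrak{R}$-\emph{raising} part $Q^+={\rm gr}\,\bQ^+$ of tri-degree $(h,R,d)=(0,\tfrac12,+1)$ (see Section \ref{sec:PVA} and the remark following the proposition), not the restriction of $\bQ^-$ to the HL subspace of $\CV$. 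The paper shows that on HL operators the only surviving term of Eq. \eqref{eq:BRSTQ} is $-J^{[1]}_{A,-1}\eta^{+,A}_1$, so the induced differential is $Q^+ x=0$ for $x\in{\rm HL}_{\CV_{\rm matter}}$ and $Q^+\eta^{-,A}=-K^{AB}J_B$ --- i.e.\ precisely the Koszul differential contracting antighosts against the moment map, which \emph{is} the standard model for the derived zero fibre and hence for the derived Poisson reduction.

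Your differential $S^-$ of Eq. \eqref{eq:BRSTS} has tri-degree $(0,-\tfrac12,-1)$ and is (up to sign) the Hermitian \emph{adjoint} of $Q^+$. It does indeed preserve the HL subspace, but it goes the wrong way: it annihilates the antighosts and sends matter operators into ghost degree one, so the resulting complex is not the Koszul/BRST model --- a differential with $\eta^{-,A}\mapsto 0$ and $\mu_A\mapsto -2h^\vee K_{AB}\eta^{-,B}$ does not resolve $\CO(\mu^{-1}(0))$, and your own observation that it degenerates on abelian summands (where the level, hence the coefficient, is zero) is a symptom of this rather than a technicality to be handled separately. Hodge theory on the finite-dimensional graded pieces does give an isomorphism of the cohomologies of $Q^+$ and $S^-$ as graded vector spaces, but the proposition asserts an identification of DG Poisson algebras with the derived reduction, and for that you must exhibit the Koszul differential itself; your ``recognition'' step as written therefore does not go through. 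The fix is simply to work with $Q^+$ on ${\rm gr}_\mathfrak{R}\CV$ (where the BPS bound kills every term of Eq. \eqref{eq:BRSTQ} except $-J^{[1]}_{A,-1}\eta^{+,A}_1$), after which the argument closes exactly as you intended.
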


\begin{rmk}
    The differential $Q^+$ (resp. $Q^-$) on ${\rm gr}_\mathfrak{R} \CV$ has grading $(h, R, d) = (0, \frac{1}{2}, 1)$ (resp. $(h, R, d) = (0, \frac{1}{2}, -1)$) and hence preserves the space of Hall--Littlewood (resp. anti\=/Hall--Littlewood) operators. If one views $d$ as a cohomological grading on ${\rm HL}_\CV$, $Q^+$ naturally equips the Hall--Littlewood chiral ring with the structure of a DG $0$-shifted Poisson algebra. On the other hand, one could instead view $2R$ as a cohomological grading on ${\rm HL}_\CV$, leading to a DG $2$-shifted Poisson algebra.
\end{rmk}

\begin{proof}
    The statement at the level of Poisson algebras follows from the fact that the Hall--Littlewood chiral ring of $\CV$ is simply the $G$-invariants in the Hall--Littlewood chiral ring of $\CV_{\rm matter} \otimes {\rm Sf}[\C^2 \otimes \fg]$ together with Lemma \ref{lem:HLSbSf}:
    \[
        {\rm HL}_{\CV} \simeq \left({\rm HL}_{\CV_{\rm matter}} \otimes \bigwedge\!{}^\bullet \fg\right)^G~.
    \]
    The theorem then follows upon making explicit the action of $Q^+$ on Hall--Littlewood operators in $\CV$. First note that goodness of the action implies that the currents $J_A$, which have $h = 1$, $p = 1$, and $d = 0$, are naturally Higgs branch operators in $\CV_{\rm matter}$ and hence Hall--Littlewood operators. Moreover, Poisson bracket with $J_A$ generates the action of $G$ on ${\rm HL}_{\CV_{\rm matter}}$, \emph{i.e.} they are the components of the comoment map for the $G$ action.
	
    The differential $Q^+$ takes the form
    \[
    \begin{aligned}
        Q^+ & = \sum_{n > 0} \tfrac{1}{n}\bigg(\eta^{+, A}_{-n} J^{[0]}_{A,n} - J^{[1]}_{A,-n} \eta^{+, A}_{n}\bigg)\\
		& \qquad + \sum_{m,n > 0}f_{ABC} \left(\tfrac{1}{m(m+n)}\eta^{+, A}_{-n} \eta^{-,B}_{-m}\eta^{+,C}_{n+m} - \tfrac{1}{2mn}\eta^{+, A}_{-n} \eta^{+,B}_{-m}\eta^{-,C}_{n+m}\right)~,
    \end{aligned}
    \]
    where $\eta^{\pm, A}_{-n}$, $J^{[1]}_{A,-n}$ are understood as the endomorphisms describing the normally-ordered product with $\eta^{\pm, A}$ and $J_A$ and their derivatives in ${\rm gr}_\mathfrak{R} \CV$ and $\eta^{\pm, A}_{n}$, $J^{[0]}_{A,n}$ are understood as the endomorphisms appearing in the vertex Lie structure applied to $\eta^{\pm, A}$ and $J_A$. As a result of the BPS bound, the only term in this expression that is non-zero when acting on Hall--Littlewood operators is $-J^{[1]}_{A,-1} \eta^{+, A}_{1}$. In particular, we find the action of $Q^+$ to be induced by the restriction to $G$-invariants of the following action,
    \[
    \begin{aligned}
        &Q^+ x  = 0~, \qquad\qquad x \in {\rm HL}_{\CV_{\rm matter}}\\
        &Q^+ \eta^{-, A} = -K^{AB} J_B~,
    \end{aligned}
    \]
    which indeed reproduces the $G$-invariant subcomplex of the Koszul complex restricting to the zero level set of the moment map.
\end{proof}

In light of Corollary \ref{cor:ddcPVA}, the graded-unitary structure on $\CV$ implies that these derived Poisson reductions are formal.
\begin{cor}\label{cor:HLformality}
    Let $\CV_{\rm matter}$ be a graded-unitary vertex algebra whose internal $\Z$ grading satisfies the BPS bound. Moreover, assume that it is equipped with a good Hamiltonian $\wh{\fg}_{-2h^\vee}$ action. Then the derived Poisson reduction of ${\rm HL}_{\CV_{\rm matter}}$ by $G$ is formal.
\end{cor}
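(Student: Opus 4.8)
The plan is to deduce this from Proposition~\ref{prop:HLBRST} and Corollary~\ref{cor:ddcPVA}, by adapting the $d^c$-diagram argument of Theorem~\ref{thm:formality}. By Proposition~\ref{prop:HLBRST} the derived Poisson reduction of ${\rm HL}_{\CV_{\rm matter}}$ by $G$ is modeled by the differential graded Poisson algebra $({\rm HL}_\CV, Q^+)$, with $\CV = (\CV_{\rm matter}\otimes{\rm Sf}[\C^2\otimes\fg])^G$ and $Q^+ = {\rm gr}_\mathfrak{R}\bQ^+$ the Koszul-type differential of Proposition~\ref{prop:HLBRST}. By Proposition~\ref{prop:HLchiralring} this $({\rm HL}_\CV, Q^+)$ is at once a differential graded Poisson subalgebra of $({\rm gr}_\mathfrak{R}\CV, Q^+)$ --- the subspace of operators saturating the BPS bound $h = R + \tfrac12|d|$ with $d\leqslant 0$ --- and the quotient of $({\rm gr}_\mathfrak{R}\CV, Q^+)$ by the differential graded Poisson ideal generated by the operators lying strictly above the BPS bound together with the anti-Hall--Littlewood operators of positive $d$. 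So it suffices to show this DG Poisson algebra is quasi-isomorphic to its cohomology.

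The decisive observation is that the Hodge-theoretic operators on ${\rm gr}_\mathfrak{R}\CV$ produced by Proposition~\ref{prop:PVAKahler} --- the differential $Q^+$, its adjoint $\overline{Q}_+ = (Q^+)^\dagger$, and the Laplacian $\Delta = [Q^+,\overline{Q}_+]$ --- are homogeneous of tri-degrees $(0,\tfrac12,1)$, $(0,-\tfrac12,-1)$ and $(0,0,0)$ in $(h,R,d)$, and a short computation with these shifts shows that each of them carries a BPS-saturating operator to a BPS-saturating operator, or to zero by the BPS bound. Hence all three preserve ${\rm HL}_\CV$, the Hodge decomposition of ${\rm gr}_\mathfrak{R}\CV$ restricts to one of ${\rm HL}_\CV$ with harmonic space $\ker\Delta\cap{\rm HL}_\CV\cong H({\rm HL}_\CV,Q^+)$, and --- crucially --- the auxiliary vector $\tilde x$ produced by the $Q^+Q^-$ lemma of Corollary~\ref{cor:ddcPVA} for $x\in{\rm HL}_\CV$ (so that $x = Q^+Q^-\tilde x$) satisfies $Q^-\tilde x\in{\rm HL}_\CV$, since $Q^-\tilde x$ has tri-degree $(h,R-\tfrac12,d-1)$, which saturates the bound exactly when $x$ does. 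Feeding these facts into the proof of Theorem~\ref{thm:formality} --- with $\CV$ there replaced by ${\rm gr}_\mathfrak{R}\CV$, all subcomplexes and quotients intersected with, respectively projected onto, the BPS-saturating sector, and ``$\bQ^-$-exactness'' read inside the ambient ${\rm gr}_\mathfrak{R}\CV$ --- yields a chain of quasi-isomorphisms of DG Poisson algebras joining $({\rm HL}_\CV,Q^+)$ to $(H({\rm HL}_\CV,Q^+),0)$. Since the latter is the derived Poisson reduction by Proposition~\ref{prop:HLBRST}, this is the asserted formality; specializing to symplectic-boson matter and invoking Lemma~\ref{lem:HLSbSf} gives the stated case of $\CR/\!\!/G$ for $\CR$ with $\Tr_\CR(X^2) = 2\Tr_{\rm ad}(X^2)$.

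The step I expect to be the main obstacle is reconciling the DGMS machinery, which requires two honest algebra differentials, with the fact that on ${\rm HL}_\CV$ the partner differential $Q^-$ acts \emph{identically by zero}: one cannot run the $d^c$-diagram purely inside ${\rm HL}_\CV$, and must instead exploit the realization of ${\rm HL}_\CV$ as a tri-graded summand and quotient of ${\rm gr}_\mathfrak{R}\CV$ and check that the formality zig-zag of ${\rm gr}_\mathfrak{R}\CV$ --- whose maps and operators are all homogeneous for the $(h,R,d)$-grading --- descends to the BPS-saturating sector while remaining a chain of quasi-isomorphisms. Making this descent rigorous demands a careful accounting of how $Q^+$, $Q^-$, $\overline{Q}_+$ and $\Delta$ move operators among the three strata (strictly above the bound, Hall--Littlewood, anti-Hall--Littlewood), and in particular of the Higgs-branch operators, which saturate the bound from both sides, so that the relevant subspaces really do form DG Poisson ideals and subalgebras at every stage of the zig-zag.
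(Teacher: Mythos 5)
Your proposal is correct and takes essentially the same route as the paper, which derives Corollary \ref{cor:HLformality} in a single sentence from Proposition \ref{prop:HLBRST} and the $Q^+Q^-$ lemma of Corollary \ref{cor:ddcPVA}; your tri-degree bookkeeping (that $Q^+$, $\overline{Q}_+$, $\Delta$ preserve the BPS-saturating sector and that the element $Q^-\wt{x}$ produced by the lemma is again Hall--Littlewood) supplies exactly the descent step the paper leaves implicit and which you rightly flag as the only real content. The one simplification you miss is that, since ${\rm HL}_\CV\subseteq\ker Q^-$, the zig-zag collapses: the left leg is the identity and the single projection $({\rm HL}_\CV,Q^+)\to\bigl({\rm HL}_\CV/({\rm HL}_\CV\cap{\rm im}\,Q^-),0\bigr)$ is already a quasi-isomorphism of DG Poisson algebras onto $(H({\rm HL}_\CV,Q^+),0)$, so the careful stratified descent you worry about in your last paragraph is not needed.
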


\begin{rmk}
    As an explicit example, we can take $\CV_{\rm matter}$ to be the symplectic boson VOA ${\rm Sb}[\CR]$ based on a pseudo-real representation $\CR$ of a compact, semisimple group $G_c$ with
    \[
        \Tr_\CR(X^2) = 2 \Tr_{\rm ad}(X^2)~.
    \]
    for all $X \in \fg_c$. This condition ensures that ${\rm Sb}[\CR]$ together with its graded-unitary structure given in Section \ref{sec:examples} admits a good Hamiltonian $\wh{\fg}_{-2h^\vee}$ action. This corollary then says that the derived Poisson reduction $\CR/\!\!/G$ is formal.
\end{rmk}

\subsection*{Acknowledgments}
The authors would particularly like to acknowledge Dylan Butson for key suggestions during the early stages of this work. The authors also thank Kevin Costello, Justin Hilburn, Leonardo Rastelli, and Pavel Safronov for useful discussions on related topics. The work of CB is supported in part by grant \#494786 from the Simons Foundation, by ERC Consolidator Grant \#864828 “Algebraic Foundations of Supersymmetric Quantum Field Theory” (SCFTAlg), and by the STFC consolidated grant ST/T000864/1. The work of NG is supported by ERC Consolidator Grant \#864828 “Algebraic Foundations of Supersymmetric Quantum Field Theory” (SCFTAlg).

\bibliographystyle{amsalpha}
\bibliography{formality}
	
\end{document}